   \newtheorem{theorem}{Theorem}[section]
   \newtheorem{lemma}[theorem]{Lemma}
   \newtheorem{proposition}[theorem]{Proposition}
   \newtheorem{condition}[theorem]{Condition}
   \newtheorem{defn}[theorem]{Definition}
   \newenvironment{proof}[1][Proof]{\begin{trivlist}
   \item[\hskip \labelsep {\bfseries #1}]}{\end{trivlist}}
   \newenvironment{definition}[1][Definition]{\begin{trivlist}
   \item[\hskip \labelsep {\bfseries #1}]}{\end{trivlist}}
   \newenvironment{remark}[1][Remark]{\begin{trivlist}
   \item[\hskip \labelsep {\bfseries #1}]}{\end{trivlist}}
   \newcommand{\qed}{\nobreak \ifvmode \relax \else
      \ifdim\lastskip<1.5em \hskip-\lastskip
      \hskip1.5em plus0em minus0.5em \fi \nobreak
      \vrule height0.75em width0.5em depth0.25em\fi}
   \numberwithin{equation}{section}
\newcommand{\pe}{\prime}
\newcommand{\cA}{\mathcal{A}}
\newcommand{\cB}{\mathcal{B}}
\newcommand{\cC}{\mathcal{C}}
\newcommand{\cD}{\mathcal{D}}
\newcommand{\cE}{\mathcal{E}}
\newcommand{\cG}{\mathcal{G}}
\newcommand{\cI}{\mathcal{I}}
\newcommand{\cR}{\mathcal{R}}    
\newcommand{\cS}{\mathcal{S}} 
\newcommand{\cT}{\mathcal{T}}
\newcommand{\cV}{\mathcal{V}}
\newcommand{\cW}{\mathcal{W}}
\newcommand{\bnu}{{\bm{\nu}}}
\newcommand{\bth}{{\bm{\theta}}}
\newcommand{\bA}{{\bf A}}
\newcommand{\bB}{{\bf B}}
\newcommand{\bL}{{\bf L}}
\newcommand{\bM}{{\bf M}}
\newcommand{\bT}{{\bf T}}
\newcommand{\bU}{{\bf U}}
\newcommand{\bW}{{\bf W}}
\newcommand{\bbB}{\mathbb{B}}
\newcommand{\bbD}{\mathbb{D}}
\newcommand{\bbN}{\mathbb{N}}
\newcommand{\bbP}{\mathbb{P}}
\newcommand{\bbR}{\mathbb{R}}    
\newcommand{\bbT}{\mathbb{T}}
\newcommand{\bbZ}{\mathbb{Z}}
 \newcommand{\ul}{\underline}
 \newcommand{\ol}{\overline}
 \newcommand{\wt}{\widetilde}
 \newcommand{\wh}{\widehat}
 \newcommand{\tn}{\textnormal}
\newcommand{\be}{\begin{equation}} 
\newcommand{\ee}{\end{equation}}
\newcommand{\beq}{\begin{eqnarray}}
\newcommand{\eeq}{\end{eqnarray}}
\DeclareMathOperator{\dist}{dist}
\newcommand{\ei}{{\bf e}_i}   
\newcommand{\lra}{\leftrightarrow}   
\newcommand{\cc}{\subset\subset}
\begin{document}

\title{Stretched exponential decay of correlations in the quasiperiodic continuum percolation model}
\author{Rajinder Mavi}
\maketitle
{\let\thefootnote\relax\footnote{This work was supported by the Institute of Mathematical Physics at Michigan State University and NSF Grant DMS-1101578} }

\section{Introduction} 
The setting of the continuum percolation model is the product of a discrete `space-like' graph $\cG = (\cV,\cE)$ with a continuous `time-like' dimension $\bbR$. The model is similar to the famous contact process, with `deaths' arriving independently on the vertex lines $\cV \times \bbR$ and `bonds' arriving independently on the edge lines $\cE\times \bbR $. Unlike the contact process, the continuous dimension in this model is non-oriented, so if a point $(x,t)$ percolates to a point $(y,s)$ then the converse is true as well. There are a variety of hybrid discrete-continuous population models, for a review, see \cite{Grimmett2008}.

   As with traditional bond/site percolation models, the continuum percolation model with uniform death rate $\delta$ and bond rate $\lambda$ has a continuous phase transition from a low density phase $\lambda/\delta < \rho_c$ to a high density phase $\lambda / \delta > \rho_c$.  The low density phase is characterized by exponential decay of correlations \cite{BG1991}, 
   \[ P[(x,t)\tn{ and }(y,s)\tn{ belong to the same cluster}] < 
      e^{- \mu ( |x- y| + |t -s| )} \]
   and the high density phase is characterized by the almost sure existence of an unbounded cluster. At $\lambda / \delta = \rho_c$ the probability of an infinite cluster is zero \cite{BG1990}.
   
   It is interesting to ask whether this phase transition is preserved for non-uniform environments $\delta$ and $\lambda$. More formally, let $\delta: \cV \to (0,\infty)$, and $\lambda:\cE \to (0,\infty) $, and now define the death  rate $\kappa^{-1}\delta$ and bond rate $\kappa\lambda$. We ask whether the phase transition persists, and what is the effect on the phases,  is the effect on the phases, and existence of a phase transition  are the high and low density phases preserved in the parameter $\kappa$.

    It is natural to ask how stable these phases are with respect to non-uniform perturbations.  In particular we are interested in perturbations such that $ \inf\{ \delta_x \mid x\in \cV\} \cup \{\lambda^{-1}_{u} \mid u \in \cE \} = 0$. Under sufficiently mild perturbations, (where the value of $\delta$ is changed at a set of points with zero density in $\cV$) the phase transition value $\rho_c$ does not change, but the phase transition becomes discontinuous \cite{madras}.
 
  Moreover, it has been shown that {\it a} phase transition is preserved for randomly disordered parameters $\lambda $ and $\delta$. Klein \cite{klein94} showed, under mild conditions on the moments of parameters that a low density phase with exponential decay of correlations in the spatial direction exists. More formally, in that paper it was shown that, if $\delta$ and $\lambda$ are i.i.d. random values such that 
   \be \label{iidkleincondition} 
     E [\log (1 + \delta^{-1} )]^\beta + E [\log (1 + \lambda )]^\beta < \infty   \ee
    and 
    \be \label{kleinratio} E[ \log (1 + \lambda/\delta ) ]^\beta   \ee 
   is sufficiently small, where 
   \[\beta > 2d^2 ( 1 + \sqrt{ 1 + 1/d} + 1/2d ), \]
   then correlations decay as
   \[  P[(x,t)\tn{ and }(y,s)\tn{ belong to the same cluster}] < 
      C_{x,\delta, \lambda} e^{- \mu ( |x- y| + [\log (1 + |t-s|) ]^{\tau})}    \]
   for some $\tau > 1$. Note that if $\delta^{-1}$ and $\lambda $ are bounded below, and if (\ref{kleinratio}) is large then there is an infinite component with probability one, in that sense the phase transition is preserved for such models. As an alternative to random disorder, it is interesting to consider quasiperiodic disorder. The formulation of quasiperiodic disorder is stated below in Section \ref{main11}. Similar bounds on the bounds on the percolation probability hold for quasiperiodic disorder  \cite{jito}, with similar moment conditions on the sampling functions. We discuss this similarity in detail in \cite{mavi1}. On the other hand, for sufficiently strong disorder there is no phase transition. In the random case, sufficient concentration of $\delta$ near zero and heavy tails of $\lambda$ imply there is no phase transition depending on the moment behavior of $\lambda / \delta$ \cite{AKN}. In the quasiperiodic case, the sampling function defining $\delta$ vanishing sufficiently fast near a zero point implies there is no phase transition in the choice of $\lambda$.
   
   In this paper we will show the existence of an intermediate behavior in the low density phase due to the behavior of the sampling function near zero values. We show that for Diophantine disorder and power law  behavior near zero points, the correlation probabilities decay as
   \[ P[(x,t)\tn{ and }(y,s)\tn{ belong to the same cluster}] < 
      C_{x,\delta, \lambda} e^{- \mu ( |x- y| +  |t-s |^\tau )}      \]
   for some $\tau  > 0$.

   \subsection{Percolation Model}\label{sec:modelintro}
    Let consider the graph determined by the integer lattice $ \bbZ^d$ connected by nearest neighbors. The vertex set of the graph is 
    \be \label{vertex-int}  \cV := \bbZ^d. \ee
    The edge set is composed of the pairs of nearest neighbors $\{x,y\}$ so that $\|x - y\|_1 = 1$. The edges will be labeled by their midpoints: 
    \be\label{edge-int}  \cE := \left\{ \left. \frac{x+y}{2} \right| \tn{ for all } x,y\in \bbZ^d \tn{ so that }\|x-y\|_1 =1  \right\}.   \ee
    We state our proofs and carry out the analysis in the context of the integer lattice, but similar proofs can be carried out in any graph with bounded coordination number. The graph is defined as $\cG = \cV \cup \cE $. The model is defined on $\bL_\cG = \cG \times \bbR $.
   
   To each $x \in \bbZ^d $ we associate a rate parameter $ \frac{1}{\kappa}\delta_x > 0$ for a Poisson process of deaths.  To each edge $u \in \cE $ we associate a rate parameter $\kappa \lambda_u > 0$ for for a Poisson process of bonds. Here $\kappa  > 0 $ is a global tuning parameter. These processes take place on $ \bL_\cG$ which may be visualized as being embedded in $\bbR^{d+1}$.

       For each $x \in \cV$, for  each arrival $t$ of the death process on $ x\times \bbR$ we delete the point $x\times t$ from $\bbZ^d\times \bbR$. For each $u \in \cE $, for each arrival $t$ of the bond process on $u \times \bbR $  we identify points $(x,t)$ and $(y,t)$, where $u = (x+y)/2$ for nearest neighbors $x$ and $y$. The resulting percolation measure, which we denote by $P = P_{\kappa,\delta,\lambda}$, is the product measure over the  collection of Poisson measures.  Denote the set of realizations of the Poisson processes as $\Omega$. With probability one, each realization $\omega \in \Omega$ breaks $\cV\times \bbR $ into finitely many connected components. For $x,y\in \cV$, $t,s \in \bbR$, denote the event that $( x,t)$ belongs to the same component as $(y,s)$ by the notation $\{ (x,t) \lra (y,s) \}$. Note in particular, that the real dimension in this model is not directed as it is in the well known contact process, thus if a point $(x,t)$ `percolates to' a point $(y,s)$ then conversely, $(y,s)$ percolates to $(x,t)$.

      The model in the uniform case, $\delta_x \equiv \delta $ ( for all $x  \in \cV$) and $\lambda_{u} \equiv \lambda$ (for all $ u \in \cE$),  (and $\kappa = 1$) has been well studied and is known to exhibit long and short range phases. For all $d \geq 1$, if $\frac\lambda\delta \leq \rho_d$ the system is in the short range phase with exponentially decaying two point percolation $P[ \{ (x,t)\lra (y,s) \}] < e^{- \lambda \mu_\rho (\|x - y\|_1 + |t - s|)}$. For $\frac\lambda\delta >  \rho_d$ the system is in a long range phase with positive probability of an infinite cluster at the origin. 
      
     It is not hard to see that the scaling $ (\delta , \lambda ) \to (s\delta , s\lambda)$  does not effect the phase of the model, its only effect is to dilate the real dimension by a factor of $s$. This remains true in the disordered case. Thus, the environment $(\kappa^{-1} \delta, \kappa\lambda )$ is equivalent to $ (\delta , \kappa^2 \lambda)$ under rescaling by a factor of $\kappa$. The latter formulation was used in \cite{jito} for the study of quasiperiodic death rate with constant, and small, bond rate.

 Finally, let us observe that, for disordered parameters $\delta,\lambda$ we may define  local upper and lower densities of the environment. The upper (and respectively lower) density at $x$ are defined as
      \begin{align}
         \ol{\rho_x} =\kappa^2\frac{ \max_{y: \|x - y\|= 1}  \lambda_{x,y}   }{\delta_x};
         \hspace{.5in}
         \ul{\rho_x}   =  \kappa^2\frac{ \min_{y: \|x - y\|= 1}  \lambda_{x,y} }{\delta_x}.
      \end{align}
       If $\ol{\rho_x } \leq \rho_d$ (respectively, $\ul{\rho_x } > \rho_d$) uniformly for $x\in \bbZ^d$ then the system is in the short (respectively long) range phase. However, such global results do not apply when if there exists $A \subset \bbZ^d$ so that $A$ and $A^c$ are infinite and   $ \ol{\rho_{x} } \leq \rho_d <   \ul{\rho_{y}}$ for every $x\in A$ and $y\in A^c$. It is precisely these regimes we are interested in as we describe in the following section.

   \subsection{Main definitions and results}\label{main11}

We will begin by constructing quasiperiodic fields $\delta$ and $\lambda$ on $\cV$ and $\cE$ respectively. The fields are constructed by introducing a quasiperiodic dynamical system and then define the field by sampling along the orbit of the dynamical system.

Concretely, we will consider quasiperiodic shifts of $d+1$ multidimensional tori. We will write $\bbT = \bbR/\bbZ = [0,1]/\{0,1\}$ for the one dimensional torus, and, for any $\nu$, $\bbT^\nu = \bbT \times \cdots \times \bbT$ for the $\nu$-dimensional torus (from here on the $\nu$-torus). Shifts of the $\nu$-torus will be defined by {\it Diophantine} matrices which we will now define.
\begin{defn}[$\zeta$-Diophantine]
 A matrix $\bM \in \bbR^{\nu \times d}$ is $\zeta$-Diophantine if there is some $C_\zeta > 0$ so that for all $x \in \bbZ^d\setminus \{0\}$ and $\theta \in \bbT^\nu$
 \be\label{dio}
    d(\bM x + \theta,\theta) \geq \frac{C_{\zeta}}{|x|^{\zeta}}
    \ee
    where $d(\cdot,\cdot)$ denotes the distance on $\bbT^\nu$.
    Finally, we simply say $\bM$ is Diophantine if it is is $\zeta$-Diophantine for some $\zeta > 0$.
\end{defn}

  For $i = 0,1,...,d$ let $\nu_i\in \bbN$ be the dimension of the $i^{th }$ torus: $\bbT^{\nu_i}$. For each $i = 0,1,..,d$, let $\bM_i \in \bbR^{\nu_i\times d}$ be a Diophantine matrix. Thus we have $d+1$ quasiperiodic processes, defined, for $i = 0,1,..,d$, as 
  \[\bT_i^x \theta = \bM_ix + \theta_i \]
for $x \in \bbZ^d$ and phase $\theta_i \in \bbT^{\nu_i}$.

Now we introduce sampling functions which define the fields. Let  $\cC(\bbT^{\nu})$ be the real continous functions on $\bbT^\nu$. The sampling  functions will belong to spaces of the form
\[ \cC_{fin}^{+}(\bbT^{\nu}) = \{ h\in \cC(\bbT^{\nu}) \mid  h\geq 0 ; 0 \leq  |h^{-1}(0)| < \infty \}.   \] 
We will restrict our attention to sampling functions which have a power law behavior at their zeros.

\begin{defn}[$\sigma$-admissable] A function $h \in  \cC_{fin}^{+}(\bbT^{\nu})$ is $\sigma$-admissable if for any $\theta \in h^{-1}(0)$, 
 \[  \limsup_{\epsilon \to 0} \sup_{\theta' : d(\theta',\theta ) < \epsilon} \frac{|\log h(\theta')|}{|\log  d(\theta',\theta)| } < \sigma \] 
\end{defn}

Finally we define the fields. Let $h_0 \in \cC_{fin}^{+}(\bbT^{\nu_0})$, given phase $\theta_0 \in \bbT^{\nu_0}$ define the death rate as
\[  \delta_{x} =  h_0(\theta_0 + \bM_0 x  ).  \]
For $i = 1,..,d$ let  $\ei \in \bbZ^d$ be the vector with a 1 in the $i^{th }$ position and 0s at all other positions. For $i = 1,..,d$, the sampling function $h_i \in \cC_{fin}^{+}(\bbT^{\nu_i})$, and initial condition $\theta_i \in \bbT^{\nu_i}$ define bonds at edges $\cV + \frac12\ei $. For each $x \in \cV$ define the bond rate at the edge $ x + \frac12 \ei$ by
   \[   \lambda_{x + \frac12\ei}  
       =  \frac1{  h_i(\theta_i + \bM_i x  ) } \]

Let us summarize the above construction by refering to the family $\{(\bbT^{\nu_i},\bT_i,h_i)\}_{i = 0}^d$ as an environment process and $\{\theta_i\}_{i=0}^d$ as the initial phases. We summarize conditions on the environment process in the following condition.

    \begin{condition}[$(\zeta,\sigma)$-proper]\label{properenv}
      We say an enviroment process $\{(\bbT^{\nu_i},\bT_i,h_i)\}_{i = 0}^d$ is $(\nu,\zeta,\sigma)$-regular if, for $i = 0,1,...,d$:
      \begin{itemize} 
         \item[1.)] $\nu \leq \min\{\nu_0,\nu_1,..,\nu_d \}$.
         \item[2.)] For each $i$, the matrix $\bM_i \in \bbR^{\nu_i\times d}$ defining $\bT_i$ is $\zeta$-Diophantine. 
         \item[3.)] The sampling function $h_i\in \cC^+_{fin}(\bbT^{\nu_i})$ is $\sigma$-admissable.
      \end{itemize} 
    \end{condition}
   Given the sampling functions $\{h_i\}_{i=0}^d$, where $ h_i \in \cC_{fin}^+(\bbT^{\nu_i})$, let $R_i = |h^{-1}(0)|$  count the number of zeros. Define further counting parameters
    \be\label{Rvals}
     R_v := R_0 \tn{ and } R_e := R_1 + \cdots + R_d \tn{ and the sum } R = 2 \vee( R_v + R_e). \ee
    The parameters bound the resonances on each scale in the multiscale analysis which we discuss the parameters which we discuss in Section \ref{disc}.

   \begin{theorem}\label{th:diophantine} 
   	Suppose an environment process $\{(\bbT^{\nu_i},\bT_i,h_i)\}_{i = 0}^d$ is $(\nu,\zeta,\sigma)$-regular and $\mu > 0$ is fixed. Then for sufficiently small  $\kappa > 0$, there is some $C > 0$ so that for any 
   	  \be\label{tau0}
   	  \tau < 
   	    \frac{ C}{ 1+ R^2   \sigma\max\{\frac{d}{\nu} ,  \zeta \}}  \ee
   	   we have, for any $(x,t) \in \bbZ^d \times \bbR$ that there is a finite $C_x$ so that for all $(y,s) \in \bbZ^d \times \bbR$
   	\[   P[(x,t) \lra (y,s)] < C_x \exp\left( - \mu \max \{\|y - x\|, |s-t|^\tau\}  \right)   \]
   \end{theorem}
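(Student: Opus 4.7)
The plan is to carry out a multiscale analysis in the spirit of \cite{klein94, jito}, adapted so that the geometric input comes from the Diophantine/admissibility hypothesis rather than from independence of the environment. Fix an increasing sequence of scales $L_k$ (say $L_{k+1}=L_k^{\alpha}$ for some $\alpha>1$) together with thresholds $\epsilon_k \downarrow 0$, and call a site $x$ \emph{scale-$k$ resonant} if $\delta_x<\epsilon_k$ and an edge $u$ \emph{scale-$k$ resonant} if $\lambda_u>\epsilon_k^{-1}$. The goal at each scale is to bound $P[(x,t)\lra(y,s)]$ for pairs at spatial distance at most $L_k$ by $\exp(-\mu_k\|x-y\|)$ in the spatial direction and by $\exp(-\mu_k|t-s|^{\tau_k})$ in the time direction, with $\mu_k$ and $\tau_k$ converging to positive limits.

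The first ingredient is a purely geometric count of resonances, which replaces the probabilistic large-deviation bounds used in the random-disorder case. By $\sigma$-admissibility, $h_i(\theta)<\epsilon$ forces $\theta$ into a ball of radius $O(\epsilon^{1/\sigma})$ around some zero of $h_i$; equidistribution of the orbit $\bT_i^x\theta_i$ on $\bbT^{\nu_i}$ then yields an upper bound of order $\epsilon^{\nu_i/\sigma}L^d$ on the number of scale-$\epsilon$ resonances associated to each zero in a spatial box of side $L$. Moreover, any two scale-$\epsilon$ resonant sites $x,x'$ associated with the same zero satisfy $d(\bM_i(x-x'),0)\leq 2\epsilon^{1/\sigma}$, so \eqref{dio} forces $\|x-x'\|\geq c\,\epsilon^{-1/(\sigma\zeta)}$, giving a quantitative lower bound on their separation. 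The combinatorial factor $R$ in \eqref{tau0} enters because each of the $R_v+R_e$ distinct zeros of the sampling functions contributes its own family of resonances that must be controlled independently.

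The base case of the induction combines these geometric bounds with the uniform short-range estimate of \cite{BG1991}: inside a region where every $\delta_x\geq\epsilon_0$ and every $\lambda_u\leq\epsilon_0^{-1}$, the effective density $\kappa^{2}\epsilon_0^{-2}$ falls below $\rho_d$ for $\kappa$ small, giving exponential decay of the two-point function in both directions over distances of order $L_0$. The inductive step tiles a scale-$(k+1)$ box with scale-$k$ subboxes; by the counting step only a controlled number of these are scale-$k$ resonant, and pairs of resonant subboxes are well separated. A connection across the larger box either avoids all resonant subboxes, in which case the inductive spatial decay applies directly, or it enters some; within a resonant subbox the spatial estimate is lost, but a component containing a site with $\delta\sim\epsilon$ can extend vertically only for a length of order $\kappa/\epsilon$ before being cut by a death. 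Summing these vertical costs across the multiscale tower of resonances of varying strengths produces the stretched exponential $|t-s|^{\tau}$ in place of a linear $|t-s|$.

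The main obstacle, and the source of the explicit formula \eqref{tau0}, is the simultaneous choice of $\alpha$, the thresholds $\epsilon_k$, and the decay rates $\mu_k,\tau_k$: they must be coordinated with $\sigma,\zeta$, the torus dimensions $\nu_i$, and the zero counts so that the inductive hypothesis strictly improves at each step and the spatial decay rate does not collapse in the limit. The factor $\max\{d/\nu,\zeta\}$ is the worse of the equidistribution upper bound on the resonance count (governed by $d/\nu$ when $\nu=\min_i\nu_i$) and the Diophantine lower bound on resonance separations (governed by $\zeta$), while the factor $R^2$ reflects that each inductive step must accommodate both vertex and edge resonances at every distinct zero and that resonances on consecutive scales can compound when a given region contains several overlapping resonant families.
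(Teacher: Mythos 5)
Your proposal follows essentially the same route as the paper: geometric resonance counting from $\sigma$-admissibility together with the $\zeta$-Diophantine separation (Proposition \ref{res-sep}), a low-density base case, a multiscale induction tiling a scale-$(k+1)$ box by scale-$k$ subboxes with horizontal and vertical boundaries treated separately, and vertical cutting at resonance lines via deaths to produce the stretched exponential in the time direction, with the parameters $(\alpha,\gamma,\eta,\tau)$ coordinated with $\sigma,\zeta,\nu,R$. The paper implements the path-filtering through the BK inequality (Lemma \ref{lem:liebfilter}), makes the inductive hypothesis precise by conditioning on non-resonance of the box rather than holding uniformly over $x$, and closes with a Borel--Cantelli step (Proposition \ref{pr:BC}) to produce the $x$-dependent constant $C_x$ for almost every phase, but these are implementations of the strategy you describe.
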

   The choice of $\tau $ in (\ref{tau0}) is not optimized, however, the constant $C$ can be chosen to be $C = \frac1{50}$. 
   
   \subsection{Discussion of the methods.}\label{disc}
   
   The proof of Theorem \ref{th:diophantine} proceeds by a multiscale analysis. We will summarize the argument in this section, as well as the organization of the paper.
   
   The multiscale analysis is an iterative argument proceeding over a sequence of length scales. The sequence is governed by a family of parameters which we will introduce now.
      Let 
   \[  K = \max\left\{\frac{d}{\nu} ,  \zeta \right\}  \] 
   and let $\alpha   $ be a parameter so that 
   \be \label{alpha00}
    \alpha > \sigma R K.
   \ee 
   Let  $\gamma $  be a parameter so that
   \be\label{gamma00}
      \frac{\alpha}{\sigma K} > \gamma > R
   \ee
   Now let $\tau$ satisfy
     \be \label{tau13}
      \frac{1 + R^2 K \sigma}{C} > 
    \frac1\tau  > \frac{\gamma}{\gamma - 1}
   \left( 1  +  2 \alpha R \frac{\gamma + R}{\gamma - R}\right)  \ee
   for $C = \tfrac1{50}$.
   To see that such a choice of parameters is possible, consider setting $\gamma = 2R$ and $ \alpha = 4 R K \sigma $ then
   \[
   \frac{1 + R^2 K \sigma}{C}
    > \frac1\tau > \frac{\gamma}{ \gamma -1}
   \left( 1  +  2 \alpha R \frac{\gamma + R}{\gamma - R}\right)
   \]
   Finally let $\eta $ be a parameter satisfying
   \be \label{eta00}
    \frac1\tau > \eta > \frac{\gamma}{\gamma - 1}
   \left( 1  +  2 \alpha R \frac{\gamma + R}{\gamma - R}\right).  \ee

   The initial spatial length scale $L_0$ must be chosen sufficiently large.
   We then define an increasing sequence of scales tuned by $\gamma  $,
   \be\label{Lscales}
   L_{k+1} = L_{k}^{\gamma} \tn{ so that } L_k = L_0^{\gamma^{k}}. 
   \ee
  Corresponding to each length scale, the sequence of  scales for the continuous dimension are defined by
  \be \label{Tscales} T_k = L_k^\eta  \ee
   so that decay in the spatial dimension at scale $L_k$ is matched by decay in the continuous dimension at scale $T_k$.
   The parameter $\kappa$ is fixed at the initial scale $L_0$, let 
   \be\label{kappa}  \kappa = C_\kappa \epsilon_0  \ee
   where $ C_\kappa  = C_\kappa(d,\mu_0) $ is a sufficiently small constant depending on the dimension and the desired decay rate $\mu_0$. With the choice of $C_\kappa$,  $\ol{\rho_x} << \rho_d $ over `most' sets of radius $L_0$. Over such sets the correlation probabilities decay at rate $\mu_0$, which we show Proposition \ref{prop:rhosmall}.

   As the length scale $L_{\cdot}$ increases, the low density condition on $\ol{\rho_x} $ is violated over sites within a radius $L_k$ of given $x_0$. We will denote these points as resonances. The objective of the multiscale analysis is to limit the effect  these resonances have on the exponential decay. 
   \begin{defn}[$\epsilon$-resonant]\label{epsres}
   We say a point $x \in \cV $  respectively an edge $u \in \cE $ is $\epsilon$-resonant if $\delta_x < \epsilon$, respectively $\lambda_{u} > \epsilon^{-1}$.
   \end{defn}   
   
   We associate a resonance scale $\epsilon_k$ with length scale $L_k$ for each $k = 0,1,2...$ tuned by a parameter $\alpha$:
   \be \label{eps-k}
     \epsilon_k = L_k^{- \alpha}.
   \ee
   We will see that resonances only affects the decay of correlations in the spatial directions to the extent of the decay rates $\mu_k$. Let $0 < \beta < 1 - \gamma^{-1}$, the decay rates  form a decreasing sequence
   \[   \mu_{k+1} = \mu_k ( 1 - L_{k+1}^{-\beta} )  \]
   with a limiting lower bound $\mu_\infty = \inf_k \mu_k$. On the other hand, exponential decay will not hold in the continuous dimension, which leads to the scaling introduced in (\ref{Tscales}).

    In proper environments the number of resonances in any radius  $ L$ is uniformly bounded. Let the block of radius  $L$ at $x \in \cG$ be defined as
    \be \label{block} 
    \Lambda_L^\#(x) = \{ y \in \# : \|y - x\|_1 \leq L  \}  \ee
    for $\# = \cV, \cE, \cG$. In Proposition \ref{res-sep} we show Condition \ref{properenv} for proper environments controls  the number of  $\epsilon = L^{-\alpha}$ resonances in $\Lambda_{L^\gamma}^\#(x)$.

   The multiscale analysis utilizes the $L_k$ scale decay in the bulk of the $L_{k+1}$ scale regions. 
   When inducting from scale $L_k $ to $L_{k+1}$,
     sites which are $\epsilon_k$ resonant will require special care, and the assumption that they are not $\epsilon_{k+1} $ resonant. Sites which are $\epsilon_{k+1}  $ resonant will have to wait for higher scales to be included in the multiscale analysis. We will prove the induction on scale $L_0$ to $L_1$ in Proposition \ref{P:ind0} and the induction in the general case in Proposition \ref{P:indk}.
     
   To formalize the regions of a given scale, we denote a box of scale $(L,T)$ at a site $ (x,t)$ by
   \be \label{box}
   B_{L,T}^\#(x,t) :=  \Lambda_{L}^{\#}(x) \times \{ t - T, t + T \} 
   \ee
   An embedding of a block into $\bL_\cG$ is denoted by
   \be\label{eblock} 
     \Upsilon_{L}^\#(x,t) := \Lambda_{L}^\#(x) \times t.
   \ee
   The boundary of a box $B_{L,T}$ is partitioned into vertical boundary, denoted by sets $\Upsilon_L^\cV(x,t\pm T) $ and the horizontal boundary which may be written as
   \[ B_{L,T}^\cV(x,t) \setminus  B_{L-1,T}^\cV(x,t). \]
   In inducting regularity from scale $L_k$ to the box  $B_{L_{k+1},T_{k+1}}$, we consider the vertical and horizontal boundaries separately.
   
    To control percolation to the horizontal boundary we partition paths by their visits to the resonant sites. The filtration into the partition is achieved by the BK inequality  (\ref{BK}). The horizontal percolation on the initial scale and the general scale is stated as Propositions \ref{prop:Hbdry0} and \ref{prop:Hbdryk} respectively.

    The more difficult portion of the proof is control of the percolation to the vertical boundaries. Percolation to the upper vertical boundary requires a path passing through $ \Upsilon_{L_{k+1}}^\cV (x,nT_k) $ for $ n = 1,..,T_{k+1} / T_k$.  On the first scale, $\Lambda_{L_1}^\cG(x)$ the resonant set $\cR_0$  is composed of at most $R_v $ resonant sites  and  $R_e$ resonant edges. Percolation from $\Upsilon_{L_1}(x,(n-1)T_1)$ to $\Upsilon_{L_1}(x,nT_1)$ is broken up into resonant and non resonant  percolation, the bound on percolation is stated in Proposition \ref{prop:layer0}. Outside the resonant set the communication probability is relatively small (Proposition \ref{prop:res0c}), and  we will show percolation from $ \cR_0 \times [(n-1)T_{0}, nT_0]$ to $\Upsilon_{L_{1}}(x,nT_0) $ is bounded by $1 -(c \epsilon_1)^{R} $ (Propositions \ref{prop:res0a} and  \ref{prop:res0b}). Percolation through the $T_1/T_0$ layers will obtain sufficient decay. We carry out a similar proof in the general case in Proposition \ref{prop:layerk}. Again the resonant set contains $\epsilon_k$ resonant sites and edges. But, as we will use regularity at higher scales, we need to include $\epsilon_i$ resonances which are sufficiently close to the $\epsilon_k $ resonances. Again we utilize Proposition \ref{res-sep} to bound the number of those resonances. We therefore have  percolation from $\Upsilon_{L_{k+1}}(x,(n-1)T_k)$ to $\Upsilon_{L_{k+1}}(x,nT_k)$ is bounded by $ 1 - (c\epsilon_{k+1})^p $  for large enough $p$. Similar to the first step, requiring percolation through $T_{k+1}/T_k$ layers shows sufficient decay toward the vertical boundary.

   Finally, we prove Theorem \ref{th:diophantine} in Section \ref{proper reg}. In Proposition \ref{pr:BC}, we show for all $x$, and sufficiently large $k$ (depending on $x$) $\Lambda^\#_{L_k}(x)$ is not $\epsilon_k$ resonant which implies such sites are $\mu_k$ regular. Lastly,  we present the proof of Theorem \ref{th:diophantine} where we `smooth out' the decay over fixed scales $L_k$ to arbitrary large distances.

   \section{Details of the percolation model}\label{percbg}
    The family of Poisson processes are indexed by the elements of the graph $\cG = \cV \cup \cE $. The death and bond processes take place respectively on the spaces 
         \[  \bL_\cV =   \cV\times \bbR \hspace{1in}  
           \bL_\cE =   \cE \times \bbR,   \]
    which are embedded in Euclidean space
    \[  \bL_\cG = \bL_\cV \cup \bL_\cG \subset \bbR^{d+1}.  \]

    Denote the   set of realizations of the Poisson processes by $\Omega$, which can be identified as the space of locally finite subsets of $\bL_\cG $:
    \be
      \Omega = \{ \omega \subset \bL_\cG \mid \forall B \cc \bbR^{d+1},\ | B\cap \omega| < \infty  \}.
    \ee
   For $\omega \in \Omega$ we will denote the deaths as 
   \[ \bbD_\omega  =  \omega \cap \bL_\cV \]
   and the bonds as 
   \[ \bbB_\omega = \omega \cap \bL_\cE.\]
   The spaces $\bL_\#$, for $\# = \cV,\cE,\cG$, inherit  the topology from the ambient space $\bbR^{d+1}$. Given $\omega$ we let bonds introduce an equivalence relation on this topology. We define a topology $\cT_\omega$ on $ \bL_\omega  :=  \bL_\cV \setminus \bbD_\omega$ by an equivalence relation identifying points $(x,t) $ and $(x + \ei,t)$ for each $(x + \frac12 \ei,t) \in \bbB_\omega$. For $X \in \bL_\omega$ and $W \subset \bL_\cV $, we will write $C_{W,\omega}(X)$ for the connected component of $(\bL_\omega \cap W,\cT_\omega ) $ which contains $X$. For $X,Y \in \bL_\omega$, we will write $ X \lra_{W,\omega} Y $ if $ Y \in C_{W,\omega} (X)$. For the sake of simplicity of notation we will write 
   \[ 
       \{ X \lra Y | W \}  \equiv  \{  X \lra_{W} Y \}. \]

      \subsection{Geometry in $\bbZ^d \times \bbR$}
    
      Let us begin with sets in the graph $\cG = \cE \cup \cV$. It is often helpful to restrict to a subgraph $\cW \subset \cG$. Let us define vertex and edge sets with respect to $\cW$ for a subset $A \subset \cG$. The vertex set is defined as
      \be \label{verset}
        \cV_{|\cW}(A) = \{ x \in \cW\cap \cV \mid \exists u \in A \cap \cW
                   \tn{ so that }\|u-x\|_1 \leq 1/2 \},   \ee
      note $u$ may be an edge or vertex.
      The edge set with respect to $\cW$ incident to a set  $A \subset \cG$ is denoted by
        \be \label{edgeset} \cE_{|\cW}(A) = \{ u\in \cW \cap \cE    \mid \exists x\in A\cap \cW  \tn{ so that } \|u-x\|_1 \leq 1 \},  \ee
        which includes the edges internal to the set, as well as those edges connecting the set to its complement.
     Now we can extend  Definition \ref{epsres} to sets.
     \begin{defn}[$\epsilon$-resonant]\label{setres}
       A set $A \subset \cG$ is $\epsilon$-resonant if there is a site  $x \in \cV(A)$  or an edge $u \in \cE(A) $ which is $\epsilon$-resonant.
     \end{defn}

      The lattice boundary with respect to $\cW$ is defined, for a subset $A \subset \cG $ as edges which connect $A$ to the complement. Let
      \[  \wh \partial_{|\cW} A 
       = \{ u \in \cW \cap \cE \mid 
             \Lambda_{1/2}^\cV(u) \cap A\cap \cW \neq \emptyset ;  \Lambda_{1/2}^\cV(u) \cap A^c\cap \cW  \neq \emptyset  \}\]
     For sets $A \subset \cG$ the inner boundary with respect to $\cW$ is defined as 
    \[   \wh \partial_{\cW}^- A = 
    \{ \Lambda_{1/2}^\cV(u) \cap A\cap \cW  \mid  u \in  \wh \partial_{|\cW} A   \}    \]
      and the outer boundary is defined as
      \[
       \wh \partial_{|\cW}^+ A = 
       \{ \Lambda_{1/2}^\cV(u) \cap A^c\cap \cW  \mid  u \in  \wh \partial_{|\cW} A   \}   
      \]
      which is equivalent to the inner boundary of the complement.
      The distance function for sets in the lattice is defined as usual: for sets  $A, B \subset \cG$ 
      \[  \dist(A,B) := \min_{x\in A;y\in B} \|x - y\|_\infty.   \] 
     We say the  sets $A$, $B \subset \cG $ are $L$-intersecting if 
      $\dist(A,B) \leq L.$

       Now we will define objects in $\bL_\cG$ with respect to a subset $\bW\subset \bL_\cG$. For a given subset $U \subset \bL_\cV = \bbZ^d \times \bbR $  we define the horizontal and vertical boundaries of $U$ with respect to $\bW$. 
         The vertical boundary is defined as
         \begin{align} 
            \partial^\cV_{|\bW} U =    \{ (x,t) \in \bL_\cG \mid
             \forall \epsilon > 0\ \exists\ s_1,s_2 
             \tn{ so that } |s_i-t| < \epsilon ;
              (x,s_1) \in \bW\cap U ; (x,s_2 ) \in \bW \cap U^c  \}.
         \end{align}
     The horizontal boundary for a subset $U \subset \bL_{\cV}$ is defined as 
   \begin{align} 
     \partial^{\cE}_{|\bW} U =   
        \{ ( u ,t) \in \bL_\cE  \mid 
               \Upsilon_{1/2}^\cV(u,t) \cap U \cap \bW \neq \emptyset ; \Upsilon_{1/2}^\cV(u,t) \cap U^c \cap \bW \neq \emptyset  \}
         \end{align}
         Given a  subset $U \subset \bL_\cV$  let us denote
          the inner adjacent points as
         \[ 
           \partial_{|\bW}^{\cE-}U 
                      = \{ \Upsilon_{1/2}^\cV(u,t) \cap U \cap \bW \mid (u,t) \in \partial_{|\bW}^\cE U \}   \]
           and outer adjacent points as
         \[ 
           \partial_{|\bW}^{\cE+}U 
                      = \{    \Upsilon_{1/2}^\cV(u,t) \cap U \cap \bW \mid (u,t) \in \partial_{|\bW}^\cE U  \}.   \]
                         Finally define the total boundary as  $ \partial_{|\bW} U = \partial_{|\bW}^{\cE} U \cup \partial_{|\bW}^{\cV} U $ and the total inner and outer boundary as  $ \partial^\pm_{|\bW} U = \partial_{|\bW}^{\cE\pm} U \cup \partial_{|\bW}^{\cV} U $. In all notation, when $ \bW$ is dropped we set $\bW = \bL_\cG$.

      \subsection{Topology of the percolation space $\Omega$}\label{sec:topology}  
      As discussed in \cite{BG1991}, the proper topology for $\Omega$ is the Skorokhod topology, which is simply constructed by   topologizing  counting functions on $\cG \times [-t,t]$ for all $t > 0$. Let $\cS_t$ be the set of cadlag functions on $[-t,t]$, and let $\cS_{\cG,t}$ be cadlag functions on $\cG\times [-t,t]$. Construct functions $N_t$ counting the arrivals of $\omega$ in $\cG \times [-t,t]$. For each $\omega$ we construct a  counting  function $N_{t,\omega}\in \cS_{\cG,t}$ on $\cG \times [-t,t]$ so that at any point $(u,s) \in   \cG \times [-t,t]$ we have 
      \[\lim_{\epsilon \to 0} N_{t,\omega}(u, s-\epsilon) = \begin{cases} N_{t,\omega}(u,s) - 1 & \tn{ if } (u,s) \in \omega  \\ N_{t,\omega}(u,s) & \tn{ if } (u,s) \notin \omega  \end{cases}. \]
       We fix the counting process by setting  $N(u, -t) = 0$ for all $u \in \cG$, unless $ (u,-t) \in \omega$ in which case we set $N_{t,\omega}(u, -t) = 1$. Let $\cD_t$ be the class of strictly increasing continuous functions on $[-t,t]$ onto itself, and for $r \in \cD$ define
   \[  \|r\| = \sup_{s_1 \neq s_2}
        \left| \frac{r(s_1) - r(s_2)}{s_1 - s_2} \right|.\]
        For cadlag functions $x,y$ on $[-t,t]$ to $\bbR$ define
        \[ d_t(x,y) = \inf_{r\in \cD_t}
      \left\{\|r\| + \sup_{s \in [-t,t]} |x(s) - y(r(s))| \right\} \]
      which generates the Skorokhod topology for functions on $\cS_t$.  Now we define  the topology on  $\cS_{\cG,t}$, which is defined, for $x,y \in\cS_{\cG,t} $ as 
           \[ d_{\cG,t}(x,y) =  \sum_{u \in \cG} e^{-\|u\|_1} 
                 \frac{ d_t\left(x_u(\cdot), y_u(\cdot)\right) }
                        { 1 + d_t\left(x_u(\cdot), y_u(\cdot)\right) }.  \]
           Finally,  a distance function on $\Omega$ may be defined by
          \[ d_{\cG}(\omega,\omega') 
               =    \int_0^\infty  d_{\cG,t}(N_{t,\omega} ,N_{t,\omega'}) e^{-t}dt.
           \] 
            Now $d_\cG$ is a complete metric and generates a topology $\cT$ on $\Omega$, we write $\cB(\Omega) $ for the Borel $\sigma$-algebra generated by $\cT$.

       We are primarily interested in percolation events in this paper, i.e. events of the type
         \[  E_W({A,B}) := 
          \{A \lra_W B \}
          =\cup_{X\in A;Y \in B} \{ X \lra_W Y  \}  \]
         for  sets $A,B \subset \bL_\cV$. 
          Thus let us discuss the boundary of such events.
          Recall the definition of Haudorff distance between subsets of a metric space is defined by
          \[
           d_H(X,Y) = \inf\{\epsilon > 0 : B_\epsilon (X) \supset Y \tn{ and } X \subset B_\epsilon (Y) \}
          \]
           According to the construction of Skorohod topology $\omega_k \to \omega$ in $\cT$ iff in any open bounded set $W \subset \bbR^{d+1}$ we have $  d_H(\omega_k \cap  W, \omega \cap W)  \to 0.$ It is not hard to see that the boundary of a percolation event $ E_W$ is contained in the event $Z $ that the arrival times of two crossings or cuts coincide. Indeed if $\omega \in E_W \setminus Z$. Thus the boundary of $E_W$ is contained in $Z$.

      \subsection{Increasing events}
         We introduce a partial ordering over configurations $\omega \in \Omega$. The partial ordering is relevant to describing probabilistic inequalities on certain subsets of $\Omega$.

        Given two configurations $\omega , \omega' \in \Omega$ we write 
        \[\omega \leq  \omega' \hspace{.2in} \tn{ iff } \hspace{.2in} \bbB_{\omega } \subset \bbB_{\omega'} \tn{ and } \bbD_\omega \supset \bbD_{\omega'}. \]
       We say an event $X \subset \Omega$ is increasing if $\omega \in X$ and $\omega' \geq \omega$ imply $ \omega' \in X$. If $X^c$ is increasing we say $X$ is decreasing. If $X$ is either increasing or decreasing we say $X$ is monotone.

         The following is the well known FKG inequality in this context.
         \begin{theorem}[FKG inequality]
         	If $X,Y\subset \Omega$ are both increasing (or both decreasing)  and $P(\partial X) = P(\partial Y ) = 0$ then
         	\begin{align} \label{FK}
         	  P(X \cap Y ) \geq P (X) P(Y) . 
         	\end{align} 
         \end{theorem}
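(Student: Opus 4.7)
The plan is to prove the FKG inequality by approximating the continuous Poisson processes by discrete Bernoulli product spaces, applying the classical Harris–FKG inequality there, and passing to the limit. The partial ordering is essentially a product ordering (with bonds contributing with their natural order and deaths contributing with the reversed order), so the discrete version of the statement is a direct application of Harris's inequality for product measures.

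First I would reduce to a compact observation window. Fix $n, t > 0$ and let $\cG_n = \cV_n \cup \cE_n$ be the finite subgraph consisting of elements $u \in \cG$ with $\|u\|_\infty \leq n$. The $\sigma$-algebra $\cF_{n,t}$ generated by arrivals of $\omega$ inside $\cG_n \times [-t,t]$ is easily shown to increase to $\cB(\Omega)$, so by the martingale convergence theorem it suffices to verify (\ref{FK}) for the conditional expectations $P(X \mid \cF_{n,t})$ and $P(Y \mid \cF_{n,t})$, and then to pass to the $n,t \to \infty$ limit. Equivalently, using monotone approximation from inside by events that depend only on $\cF_{n,t}$, one reduces to the case where $X$ and $Y$ are measurable with respect to $\cF_{n,t}$.

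Next, for each $\epsilon > 0$ I partition $[-t,t]$ into intervals $I_1,\dots,I_M$ of length $\epsilon$ and introduce the finite Bernoulli product space $\Omega_{n,t,\epsilon}$ whose coordinates record, for each pair $(u,I_j) \in \cG_n \times \{I_1,\dots,I_M\}$, the indicator $\mathbf{1}\{\omega \cap (u \times I_j) \neq \emptyset\}$. The natural projection $\pi_\epsilon: \Omega \to \Omega_{n,t,\epsilon}$ is a measurable map, and under $P$ the coordinates are mutually independent Bernoulli variables (with parameters depending on $\kappa,\delta,\lambda$ and $\epsilon$). For every increasing event $X \in \cF_{n,t}$, the outer approximation $X_\epsilon := \pi_\epsilon^{-1}(\pi_\epsilon(X^{\mathrm{incr}}))$—where $X^{\mathrm{incr}}$ denotes the smallest increasing set in the product ordering on $\Omega_{n,t,\epsilon}$ containing $\pi_\epsilon(X)$—is increasing in the continuum ordering and decreases to a set that differs from $X$ only on the boundary event where two arrival times of bonds/deaths fall in a common subinterval, which has probability $O(\epsilon)$. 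Combined with the hypothesis $P(\partial X)=P(\partial Y)=0$ (which rules out coincidences of arrival times as described in Section \ref{sec:topology}), this gives $P(X_\epsilon) \to P(X)$ and $P(X_\epsilon \cap Y_\epsilon) \to P(X \cap Y)$ as $\epsilon \to 0$.

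Finally, in the discrete space $\Omega_{n,t,\epsilon}$ the events $\pi_\epsilon(X^{\mathrm{incr}})$ and $\pi_\epsilon(Y^{\mathrm{incr}})$ are coordinatewise increasing once one reverses the sign of every death coordinate; since the product measure has independent coordinates, the classical Harris–FKG inequality for product Bernoulli measures gives
\[
  P(X_\epsilon \cap Y_\epsilon) \;\geq\; P(X_\epsilon)\,P(Y_\epsilon),
\]
and passing to $\epsilon \to 0$ and then $n,t \to \infty$ yields (\ref{FK}). The main obstacle is a careful bookkeeping for the approximation step: one must verify that the discretized event inherits the continuum monotonicity, and that the $O(\epsilon)$ coincidence contribution really goes to zero when $P(\partial X) = P(\partial Y)=0$—for this the characterization of $\partial E_W$ in terms of coinciding arrival times, noted at the end of Section \ref{sec:topology}, is the essential input. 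The decreasing case then follows by applying the increasing case to $X^c$ and $Y^c$.
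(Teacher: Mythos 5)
The paper does not actually prove this theorem: it is stated without proof as ``the well known FKG inequality in this context,'' the intended reference being the continuum-percolation literature (e.g.\ \cite{BG1991}). So there is no internal proof to compare against, and your task is really to supply the standard argument. Your overall strategy---reduce to a compact window, replace the Poisson field by a Bernoulli grid, invoke Harris's inequality for product measures (with the death coordinates sign-reversed), and pass to the continuum using $P(\partial X)=P(\partial Y)=0$---is indeed the standard route, so the plan of attack is the right one.

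There is, however, a genuine gap in the specific approximation step. You define the outer approximant $X_\epsilon := \pi_\epsilon^{-1}(X^{\mathrm{incr}})$ (the pullback of the up-closure of $\pi_\epsilon(X)$) and assert that $X_\epsilon$ decreases to $X$ modulo an event of probability $O(\epsilon)$. This is false in general, because $\pi_\epsilon$ only records whether a box is occupied; it collapses configurations that differ in the \emph{number} of arrivals in a box. For a concrete counterexample, fix an edge $u$ and take $X = \{\omega: |\bbB_\omega \cap (u\times[0,1])| \geq 2\}$. This is increasing, local, and $P(\partial X)=0$ (the boundary lives on the null event of an arrival exactly at the endpoints of $[0,1]$). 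If $\omega$ has exactly one bond arrival in $u\times(0,1)$, say at $(u,s_0)$, then the configuration $\omega'$ with two bond arrivals in the $\epsilon$-box containing $s_0$ belongs to $X$ and satisfies $\pi_\epsilon(\omega')\leq\pi_\epsilon(\omega)$, hence $\omega\in X_\epsilon$. Thus $X_\epsilon \supset \{|\bbB_\omega \cap(u\times[0,1])| \geq 1\}$ for \emph{every} $\epsilon$, and $P(X_\epsilon\setminus X)$ is bounded below by a constant; your intermediate inequality $P(X_\epsilon\cap Y_\epsilon) \geq P(X_\epsilon)P(Y_\epsilon)$ therefore does not converge to the desired $P(X\cap Y)\geq P(X)P(Y)$. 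A symmetric objection applies to the natural inner approximant, because reducing the number of deaths in a box moves a configuration \emph{upward} in the partial order, so you cannot freely replace a multi-arrival configuration by a one-per-box representative while staying in $X^c$.

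The clean way to execute your discretization is to discretize the \emph{measure} rather than the events. Let $P_\epsilon$ be the law of a grid process that places at most one arrival per $\epsilon$-box (say at its left endpoint), independently across boxes, with occupancy probabilities tuned so that $P_\epsilon \Rightarrow P$ weakly in the Skorokhod topology on $\Omega$. Under $P_\epsilon$ there is no multiplicity collapse, the occupancy pattern is a genuine Bernoulli product, and an increasing event $X\subset\Omega$ restricts to a coordinatewise increasing event on that product (after reversing the sign of death coordinates), so Harris gives $P_\epsilon(X\cap Y)\geq P_\epsilon(X)P_\epsilon(Y)$. Since $\partial(X\cap Y)\subset\partial X\cup\partial Y$, all three events are $P$-continuity sets and the Portmanteau theorem passes the inequality to the limit; your observation that $\partial X,\partial Y$ are controlled by coinciding arrival times (Section \ref{sec:topology}) is exactly what makes this work. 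The reduction of the decreasing case to the increasing case via complements is fine once you add the short inclusion--exclusion step turning $P(X^c\cap Y^c)\geq P(X^c)P(Y^c)$ into $P(X\cap Y)\geq P(X)P(Y)$.
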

 The FKG inequality is an important lower bound on intersection events. On the other hand, there is no upper bound on intersection events. But, there is an upper bound on the circle product of events.

          For events $X,Y \in \Omega$, we define the product set $X \circ Y$ as the set with the following property.  
         $\omega \in X \circ Y $  if there is a set $W\subset \bbR^{d+1}$ so that for every $\omega_1, \omega_2 \in \Omega $ so that $ W \cap \bbD_{\omega_1}  = W\cap \bbD_\omega $,   $ W \cap \bbB_{\omega_1}  = W\cap \bbB_\omega $ and    $ W^c \cap \bbD_{\omega_2}  = W^c\cap \bbD_\omega $,   $ W^c \cap \bbB_{\omega_2}  = W^c \cap \bbB_\omega $ we have $\omega_1 \in X$ and $\omega_2 \in Y$. In words, this is to say requirements for the  events $X$ and $Y$  are satisfied on disjoint sets.

          The following is the BK inequality.
          
          \begin{theorem}[BK inequality]
          	If both $X,Y\subset \Omega$ are both positive  (or both negative) and $P(\partial X ) = P(\partial Y ) = P(\partial (X\circ Y))$ then 
          	\begin{align} \label{BK}
          	     P(X \circ Y) \leq P(X) P(Y). 
          	\end{align}
          \end{theorem}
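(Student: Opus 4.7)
The plan is to reduce the continuum statement to the classical BK inequality on finite Bernoulli product spaces in three approximation steps: first, approximate $X$, $Y$, and $X\circ Y$ by cylinder events supported on a bounded window in $\bL_\cG$; second, within that window replace the Poisson arrivals by Bernoulli arrivals on a grid of mesh $\epsilon$; third, apply the discrete van den Berg--Kesten inequality on the resulting Bernoulli product space and pass to the limit $\epsilon\to 0$, then remove the windowing.

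I would first fix a bounded window $W_N = \cG_N \times [-N,N]$ with $\cG_N \subset \cG$ finite. Since $P(\partial X) = 0$ and the Skorokhod topology of Section \ref{sec:topology} is generated by the restrictions of $\omega$ to such windows, there exist monotone cylinder approximants $X_N \subset X$ depending only on $\omega|_{W_N}$ with $P(X \setminus X_N) \to 0$, and analogously for $Y$. For the product event, I would require the witness region $W$ certifying $X\circ Y$ to be a finite disjoint union of closed space-time boxes aligned to the eventual $\epsilon$-grid; a routine argument using $P(\partial(X\circ Y)) = 0$ shows the cylinder events $(X\circ Y)_N$ defined this way exhaust $X\circ Y$ up to its boundary.

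Next I would discretize: partition $[-N,N]$ into intervals of length $\epsilon$ and, for each $u \in \cG_N$, replace the Poisson process of rate $r_u$ (equal to $\kappa^{-1}\delta_u$ for $u\in\cV$ or $\kappa\lambda_u$ for $u\in\cE$) by independent Bernoulli variables with parameter $1-e^{-r_u\epsilon}$ on each subinterval. Call the resulting measure $P_\epsilon$. Monotonicity of $X,Y$ in the continuum transfers directly to monotonicity in the Bernoulli coordinates (deaths are bad, bonds are good), and the discretization of the witness regions yields disjoint sets of Bernoulli coordinates certifying $X_N$ and $Y_N$ on disjoint subsets of the finite index set. The classical van den Berg--Kesten inequality then gives
\[
  P_\epsilon(X_N \circ Y_N) \leq P_\epsilon(X_N)\, P_\epsilon(Y_N).
\]
Sending $\epsilon \to 0$ along the natural coupling that matches each Poisson arrival with the unique Bernoulli arrival in its containing subinterval preserves the inequality on these cylinder events up to an $O(\epsilon)$ error coming from double occupancies. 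Finally sending $N\to\infty$ and invoking the three boundary-null hypotheses upgrades the cylinder approximants to the full events and yields $P(X\circ Y) \leq P(X)P(Y)$.

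The main obstacle I anticipate is the correspondence between continuum disjoint occurrence and its discrete counterpart. One must verify that the continuum witness region $W\subset\bbR^{d+1}$ can be chosen in a class regular enough (finite unions of grid-aligned closed boxes) that its $\epsilon$-discretization produces a bona fide partition of the Bernoulli coordinate set suitable for the discrete BK inequality, and that the small enlargements needed to absorb boundary effects at the grid cost only $o(1)$ probability as $\epsilon\to 0$ and $N\to\infty$. Once this correspondence is in place, together with the monotone cylinder approximation of events with $P$-null boundary in the Skorokhod topology, the remaining work is routine weak-convergence and monotone-convergence bookkeeping.
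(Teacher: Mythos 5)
The paper does not prove this theorem: the BK inequality is stated and used as a known tool in this continuum percolation setting, implicitly imported from the literature (the framework is that of Bezuidenhout and Grimmett \cite{BG1991}). Your proposal supplies the standard proof behind that citation, namely cylinder approximation on bounded windows, Bernoulli discretization of the Poisson arrivals on an $\epsilon$-grid, the discrete van den Berg--Kesten inequality on the resulting finite two-point product space, and passage to the limit using the boundary-null hypotheses. This is the canonical route rather than a novel one, and your outline is essentially correct; you also correctly isolate the genuinely delicate step, which is turning the continuum witness $W$ into a partition of discrete coordinates and controlling boundary effects as $\epsilon \to 0$. The one place I would ask you to say more is the claim that the cylinder approximants $(X\circ Y)_N$ ``exhaust $X\circ Y$ up to its boundary'': the set $W$ certifying disjoint occurrence for a given $\omega$ need not itself sit inside $W_N$, so one has to argue that for $\omega$ in the interior of $X\circ Y$ a bounded, grid-aligned witness suffices (and that enlarging $W$ to a grid-aligned neighborhood does not break the determination of $Y$ by $W^c$). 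That is unproblematic for the percolation events the paper actually feeds into (\ref{BK}), where the witness is a finite union of space-time path segments, but as written for general monotone events it is an assertion rather than an argument and deserves a sentence of justification. Note also that since the hypothesis restricts to events of the same monotonicity, the original van den Berg--Kesten inequality for increasing events suffices and Reimer's general inequality is not needed.
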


           \begin{remark}
           	 For percolation events $X$ and $Y$ the product event $X \circ Y$ is an event characterized by the existence of two non-intersecting percolation  paths. It is not hard to see that the boundary of this event is contained in the event $Z$ that the arrival times of two crossings or cuts coincide, i.e. $\partial (X \circ Y) \subset Z$. As discussed in the above $\partial X, \partial Y \subset Z$. As $P(Z ) = 0$ we may apply  both (\ref{FK}) and (\ref{BK}) to percolation events.
           \end{remark}

       \subsection{Filter inequality}

          Our main application of the BK  inequality is to filter communication events by the last visit of a percolation path to a set containing the initial site of the path.  
          
          Integration on the space $\bL_\cG$ is defined by the embedding of the Lebesgue measure on $\bbR$ onto the sets $w \times \bbR$ for $w \in \cG$. For each $w \in \cG$ define the map $\iota_w(s) = (w,s) \in \bL_\cG$. For any $f : \bL_\cG \to \bbR$, and $ \bW \subset \bL_\cG $ define
          \[  \int_{U \in \bW} f(U) dU = 
              \sum_{w \in \cG}
              \int_{\iota_w^{-1}(\bW \cap [w\times \bbR]) } 
               f(\iota_w(s) ) ds .   \]
        Now we define a quantification of the total communication to the boundary. Given a set $A \subset V \subset \bL_\cV$ and a set  $\bW \subset \bL_\cV $, let us define the sum percolation to the boundary within $V$ as 
          \begin{align}\label{eq:Q}
           Q(A,V|\bW) := &  
           \int_{ U \in \partial^\cE_{|\bW} V }
            \lambda_{U}  P[A \lra V \cap \Upsilon_{1/2}^\cV (U)|V\cap \bW]
            dU  
            + \sum_{ X \in  \partial^\cV_{|\bW} V }
           P[A \lra X| V \cap \bW].
           \end{align}
          The formal term defined in (\ref{eq:Q}) is used for the following bound for communication within a set $\bW \subset \bL_\cV$.
          \begin{lemma}\label{lem:liebfilter}
          	Let $A \subset V $ and $B \subset V^c $,
          	then 
          	\begin{align}\label{eq:liebfilter}
       P\left[ A \lra B |\bW\right] 
        	\leq   & 
                 Q(A,V | \bW) 
           \sup_{ Y \in \partial^+_{|\bW} V} 
              P[ Y  \lra B|  \bW ]
          	\end{align}
          \end{lemma}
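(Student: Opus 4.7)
The approach is a last-exit decomposition. Since $A \subset V$ and $B \subset V^c$, any realization of $\{A \lra B | \bW\}$ contains a percolation path that crosses out of $V$; I partition the event according to the location of the first such exit along the path. The path decomposes into an interior piece (inside $V \cap \bW$, from $A$ to the exit) and an exterior piece (inside $\bW$, from the outer neighbor of the exit to $B$). These two pieces are supported on disjoint portions of the Poisson configuration, so the BK inequality (\ref{BK}) yields a product upper bound on each partition element.

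Exits come in two kinds, matching the two summands in (\ref{eq:Q}). For a horizontal exit at $U \in \partial_{|\bW}^\cE V$, the path uses a Poisson bond at edge-time $U$; since the bond process has intensity $\lambda_U$, the infinitesimal contribution is $\lambda_U\, dU$ multiplied by the interior probability $P[A \lra V \cap \Upsilon_{1/2}^\cV(U)\,|\, V \cap \bW]$ and by the exterior connection probability from the opposite endpoint in $\partial_{|\bW}^{\cE+} V$ to $B$. For a vertical exit at $X \in \partial_{|\bW}^\cV V$ (a locally finite set on vertex lines), the interior piece is $\{A \lra X\,|\, V \cap \bW\}$ and the exterior piece starts at the outer adjacent point. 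In both cases I bound the exterior factor uniformly by $\sup_{Y \in \partial_{|\bW}^+ V} P[Y \lra B\,|\, \bW]$, and collecting the interior factors reproduces $Q(A,V|\bW)$.

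The main technical obstacle is making the BK split precise for horizontal exits, where the Poisson intensity $\lambda_U$ must be extracted through a limiting argument. I would discretize the time axis into small cells, on each cell write the event as a BK-disjoint product of the arrival of a bond in the cell, the interior communication event (measurable with respect to the configuration in $V \cap \bW$), and the exterior communication event (measurable with respect to the configuration in $V^c \cap \bW$); then apply (\ref{BK}), which is valid because all relevant event boundaries in $\Omega$ have zero probability by the Poisson continuity discussed in Section \ref{sec:topology}; and pass to the limit by monotone convergence, producing the $\int \lambda_U \, dU$ integration in (\ref{eq:Q}). The same continuity ensures that the first exit is almost surely well defined, so summing over exits without double counting gives exactly the bound claimed.
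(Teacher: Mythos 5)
Your proposal is correct and follows essentially the same route as the paper: a first-exit decomposition of the path through $\partial_{|\bW} V$, a BK split of the event into the interior piece (inside $V\cap\bW$), the bond arrival, and the exterior piece, followed by integrating against $\lambda_U\,dU$ over the edge boundary and summing over the vertex boundary, then bounding the exterior factor by the supremum. (Minor note: you call it a ``last-exit decomposition'' in the first line but then correctly describe a first-exit decomposition, which is what the paper uses; the discretization/limiting step you outline to extract the Poisson intensity is a reasonable way to make precise what the paper writes directly as $P(U\in\bbB)\,dU = \lambda_U\,dU$.)
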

          \begin{proof} 
             Any path from $A$ to $B$ must pass through $\partial_{|W} V$ finitely many times (as $\Omega$ is locally finite). Thus, there is some `first passage' of the path out of  $V$. We will proceed by filtering the paths by the first passage step which may be through the edge or the vertex boundary.  We will begin with the edge boundary, then consider the vertex boundary. 
          
           The event of the first path through a bond $U \in \partial^{\cE}_{|\bW} V $ at a given position may be written as 
             \[  F_{U} :=  
              \{ A \lra  V \cap \Upsilon^\cV_{1/2}(U)|V \cap \bW  \} \circ \{ U \in \bbB   \}  \circ \{  V^c \cap \Upsilon^\cV_{1/2}(U) \lra B   |  \bW \}.  \] 
             Similarly, a path crossing out of $V$ for the first time through a point in the vertical boundary is written as  
             \[ F_X =  \{ A \lra X  |V \cap \bW  \} \circ \{  X \lra B   | \bW  \} . \]  
              Thus we have
              \[ \{ A \lra B | \bW \} \subset 
               \left( 
              \bigcup_{ 
                U \in \partial_{|\bW}^{\cE} V }   
               F_{ U } \right) 
              \cup
               \left(  \bigcup_{X \in \partial_{|\bW}^\cV V } F_{X} \right).  \]
               Now apply (\ref{BK}) to all events $F_\cdot$. For the final steps through the edge boundary  we obtain  
               \begin{align} \label{edg1}
                P  &     \left( 
              \bigcup_{ 
                U \in \partial_{|\bW}^{\cE} V }   
               F_{ U } \right) 
               \leq 
               \int_{
                U \in \partial_{|\bW}^{\cE} V  } 
                P ( A \lra  V \cap \Upsilon^\cV_{1/2}(U)|V\cap \bW  )  P(  V^c \cap \Upsilon^\cV_{1/2}(U) \lra B   | \bW ) P( U \in \bbB   )  dU
               \end{align}
               The density term, by definition of the Poisson process is $ P( U \in \bbB   )  dU = \lambda_U dU $.
               For the final step through the vertex boundary we have
               \begin{align}\label{ver1}
               P\left(  \bigcup_{ X \in \partial_{|\bW}^\cV V } F_X \right)     \leq 
                \sum_{ X \in \partial_{|\bW}^{\cV} V}
                P[A \lra X| V \cap \bW]   
                P[ X \lra B| \bW ].
               \end{align}
               Taking the supremum of $P[Y \lra B|\bW]$ over the boundary terms in (\ref{edg1}) and (\ref{ver1}) completes the proof.
               \qed
          \end{proof}
           
      As a first application of Lemma \ref{lem:liebfilter} we show exponential decay in a low density environment. The proof also prepares us for the methods of the multiscale analysis. Recall, the parameter $\kappa$ is used to tune the crossing and  cut rates and we are using the notation $P=P_{\kappa,\delta,\lambda}$. 
      \begin{proposition} \label{prop:rhosmall} 
      Suppose $ \bW \subset \bL_\cG $ is $\epsilon$-non-resonant. Let  $X = (x,t)$ be a point and $U$ be a set such that $X \notin U$. For any $0 < \mu < \infty$, there is a $C_\kappa( \mu, d ) > 0$ so that for $\kappa = C_\kappa \epsilon  $ we have 
       \begin{align}\label{eq:rhosmall} 
      	 P [ X \lra  U| \bW ]
      	  <
      	   \exp\left(- \mu  \inf_{(y,s)\in U}  
    \min \left\{ \|x-y\| ,\left\lfloor  |t-s| \right\rfloor \right\} \right).
       \end{align}
      \end{proposition}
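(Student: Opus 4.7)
The plan is to apply the filter inequality (Lemma~\ref{lem:liebfilter}) iteratively along a sequence of unit-length ``needles'', inducting on the integer quantity
\[ N(X,U) \;:=\; \inf_{(y,s)\in U}\min\{\|x-y\|,\lfloor|t-s|\rfloor\}. \]
The key preliminary observation is that $\epsilon$-non-resonance of $\bW$, together with the tuning $\kappa=C_\kappa\epsilon$, gives the uniform local bounds $\kappa\lambda_u\le C_\kappa$ on every edge and $\kappa^{-1}\delta_x\ge C_\kappa^{-1}$ on every vertex in $\bW$, so the ``renormalised'' rates are under our full control via $C_\kappa$.

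The base case $N=0$ is trivial, and for the inductive step $N\ge 1$ I would apply the filter with the thin vertical needle
\[ V\;=\;\{x\}\times(t-1,t+1). \]
Since $N(X,U)\ge 1$ forces $|t-s|\ge 1$ for every $(y,s)\in U$, the set $V$ is disjoint from $U$ and Lemma~\ref{lem:liebfilter} applies. Reaching either of the two vertex boundary points $(x,t\pm 1)$ inside $V$ requires no death of $x$ over a unit time interval, contributing at most $2\exp(-\kappa^{-1}\delta_x)\le 2e^{-C_\kappa^{-1}}$; the edge boundary, consisting of the $2d$ incident edges integrated over $s\in(t-1,t+1)$, contributes at most
\[ \int_{t-1}^{t+1}\sum_{u\,\sim\,x}\kappa\lambda_u\,ds\;\le\;4d\,C_\kappa. \]
Hence $Q(X,V|\bW)\le 4d\,C_\kappa + 2e^{-C_\kappa^{-1}}$, which can be driven below $e^{-\mu}$ by choosing $C_\kappa=C_\kappa(d,\mu)$ small enough.

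To close the induction I would then check that every $Y\in\partial^+V$ satisfies $N(Y,U)\ge N-1$. A horizontal boundary point $(x+\ei,s)$ is at spatial distance at least $N-1$ from $U$, and since $s\in(t-1,t+1)$ while $\lfloor|t-s'|\rfloor\ge N$ for every $(y',s')\in U$, it is also at temporal floor-distance at least $N-1$; a vertical boundary point $(x,t\pm 1)$ decreases only the temporal distance, and by exactly one. Combining the filter with the inductive hypothesis $\sup_{Y\in\partial^+V}P[Y\lra U|\bW]\le e^{-\mu(N-1)}$ gives $P[X\lra U|\bW]\le Q\cdot e^{-\mu(N-1)}\le e^{-\mu N}$, as required.

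The only delicate point is the simultaneous tuning of the two contributions to $Q$: the edge mass scales linearly in $C_\kappa$ whereas the no-death probability decays as $e^{-C_\kappa^{-1}}$, so the first dominates for small $C_\kappa$ and any choice with $4d\,C_\kappa$ below half of $e^{-\mu}$ (together with the standard bound on the death term) suffices. The conceptual content of the proof is that the $\min$ in the statement makes the spatial and temporal directions symmetric, allowing a single needle $V$ to simultaneously harvest decay in both coordinates.
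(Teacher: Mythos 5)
Your proof follows essentially the same route as the paper's: apply Lemma~\ref{lem:liebfilter} to the unit needle $B_{0,1}(x,t)=\{x\}\times[t-1,t+1]$, use $\epsilon$-non-resonance and the tuning $\kappa=C_\kappa\epsilon$ to force $Q\le e^{-\mu}$ for small $C_\kappa$, and iterate. Your explicit check that every boundary point $Y$ of the needle satisfies $N(Y,U)\ge N(X,U)-1$ (and that $U$ stays disjoint from the needle so the filter keeps applying) is a useful clarification of a step the paper leaves implicit; the paper's closing line even writes the number of iterations with a $\max$ where the proposition's exponent uses $\min$, and your induction proves precisely the stated $\min$ form.
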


      \begin{proof}
         First we bound
         $  Q(X, B_{0,1}(X) | \bW ) $ defined in
         \ref{eq:Q}. 
         Note that for $Y  \in \partial^\cV B_{0,1}(X) $
         \[ \{ X \lra Y |\bW \} \subset
           \left( \{ x \times (t,t+1) \cap \bbD \neq \emptyset \}  \cap  \{ x \times (t-1,t) \cap \bbD \neq \emptyset \} \} \cap  \{  \partial^\cE B_{0,1}(x,t) \cap \bbB = \emptyset \}  \right)^c   \]
         A direct calculation of the probability of the right hand side shows that
      \[  P\left( \{ X \lra Y |\bW \}\right)
          =  
            1 -  \left(1 - e^{-\frac{1}{\kappa} \delta_x }\right)^2 
                \prod_{u \in \cE(x)} e^{- 2 \kappa \lambda_u}   \]
        On the otherhand, for $Y \in \partial^\cE B_{0,1}(X)$ we bound  $ P\left( \{ X \lra Y |\bW \}\right) \leq 1 $ thus, 
         the $Q$ term is  bounded by
         \[   Q[(x,t), B_{0,1}(x,t) | \bW ] 
           \leq 2\kappa \sum_{u \in \cE(x) } \lambda_u
            + 2 - 2 (1 - e^{-\frac{1}{\kappa} \delta_x })^2 
                \prod_{u \in \cE(x)} e^{-2 \kappa \lambda_u}   \]
         Now use the bound $\delta > \epsilon$ and $\lambda < \epsilon^{-1}$, Then, for $C_\kappa$, as in (\ref{kappa}), chosen sufficiently small with respect to $\mu$ we have 
         \[  Q[(x,t), B_{0,1}(x,t) | \bW ] 
           \leq 4d C_\kappa 
            + 1 - (1 - e^{-\frac{1}{ C_\kappa} })^2 e^{- C_\kappa 2d}
            < 
            e^{-\mu}. \]
       Thus, applying Lemma \ref{lem:liebfilter} 
                we have
        \[  P((x,t) \lra U | \bW) \leq e^{- \mu}
          \sup_{ (x_1,t_1) \in  \partial_{|W}^+ B_{0,1}(x,t) }    
       P( (x_1,t_1) \lra U | \bW  ) \]
        Now we iteratively apply this argument 
        \[ n  = \min_{(y,s)} \max\{ \lfloor \|x - y\|_\infty  \rfloor, \lfloor |t-s| \rfloor \}\]
        times which completes the argument.
      	\qed
      \end{proof}

\section{Main}

    We use the terminology {\it resonances} to describe points which are locally in the high density regime. This term was used in \cite{CK1991,CKP1991} in analogy to role bare energies  near a target energy play in multiscale proofs of Anderson localization.

   \subsection{Regularity induction}
   Let us first introduce the concept of regularity.

 \begin{definition}
 	We say a site $x\in \bbZ^d$ is $(\mu,L)$ regular if, for any $t\in \bbR$,
 	\[  Q[ (x ,t); B_{L,L^\eta}(x,t)  ] < e^{ - \mu L}  \] 
 	Moreover, for $\bW \subset \bL_\cV $, a point $(x,t) \in \cB$ is $(\mu,L|\bW)$-regular if 
 	\[  Q[ (x ,t); B_{L,L^\eta}(x,t) |\bW  ] < e^{ - \mu L}  \]
 \end{definition} 
 Note that, if we prove regularity at $(x,0)$, we have, by translation invariance, regularity at $(x,t)$ for all $t\in \bbR$.
 Regularity on an initial scale $L_0$ is easy to obtain using the tuning parameter $\kappa$. Regularity on higher scales will follow from induction, which we describe in Condition \ref{ick} and Proposition \ref{P:indk}. 
 \begin{proposition}\label{prop:bulk}
     For any $0 < \mu_0 < \infty$, for sufficiently small  $C_{\kappa}> 0$, for $\kappa = C_{\kappa } \epsilon_0 $ and any
    $x \in \cV $ such that $\Lambda_{L_0}(x)$ is $\epsilon_0$ non-resonant  then $x$ is $(\mu_0,L_0)$ regular.
      Moreover, if $\bW \subset \bL_{\cG}$ is $\epsilon_0$ nonresonant then for any $(x,t) \in \bW$, 
      \begin{align}\label{eq:bulk} 
      	 P [ (x,0)\lra  U| \bW]
      	  \leq \exp\left\{ - 2\mu_0 \inf_{(y,s)\in U}  \max \left\{ \left\lfloor  |t-s|  \right\rfloor, \|x-y\|  \right\}\right\}.
      \end{align}
 \end{proposition}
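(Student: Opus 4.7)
My proof plan treats Proposition \ref{prop:bulk} as essentially a quantitative upgrade of Proposition \ref{prop:rhosmall} combined with a boundary-counting argument. The second assertion (\ref{eq:bulk}) follows almost verbatim from Proposition \ref{prop:rhosmall}: I apply it with the decay parameter $2\mu_0$ in place of $\mu$, which merely requires choosing $C_\kappa$ smaller than the value given there for $\mu_0$. Although Proposition \ref{prop:rhosmall} is stated with a $\min$ in the exponent, its proof iterates the filter bound $n = \inf_{(y,s)\in U}\max\{\lfloor\|x-y\|_\infty\rfloor,\lfloor|t-s|\rfloor\}$ times, so the bound genuinely decays in the $L^\infty$-distance to $U$, matching the $\max$ appearing in (\ref{eq:bulk}). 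The $\epsilon_0$-nonresonance of $\bW$ is exactly the hypothesis needed.

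For the regularity assertion, I set $V = B_{L_0, L_0^\eta}(x,t)$. Since $V$ is contained in $\Lambda_{L_0}(x) \times \bbR$, its $\epsilon_0$-nonresonance is inherited from that of $\Lambda_{L_0}(x)$, and therefore (\ref{eq:bulk}) applies with $\bW = V$. Every $Y \in \partial^\cV V$ is located at time $t \pm L_0^\eta$, and every point $(u,s)$ in the horizontal boundary has $\|u-x\|_\infty \geq L_0$. Because $\eta > 1$ by (\ref{eta00}), the $L^\infty$-distance from $(x,t)$ to any boundary point is at least $L_0$, so (\ref{eq:bulk}) yields the pointwise bound
\[ P[(x,t) \lra Y \mid V] \leq e^{-2\mu_0 L_0} \]
uniformly in $Y$ on either portion of the boundary.

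Substituting into the definition (\ref{eq:Q}) of $Q$ produces
\[ Q[(x,t); V] \leq |\partial^\cV V| \cdot e^{-2\mu_0 L_0} + \Bigl(\sup_{u} \lambda_u\Bigr) \cdot N_\cE \cdot 2L_0^\eta \cdot e^{-2\mu_0 L_0}, \]
where $N_\cE$ counts the edges in the horizontal boundary of $V$. Elementary counting gives $|\partial^\cV V| \leq C_d L_0^d$ and $N_\cE \leq C_d L_0^{d-1}$, while $\epsilon_0$-nonresonance forces $\lambda_u \leq \epsilon_0^{-1} = L_0^\alpha$. Combining, $Q[(x,t); V] \leq C_d L_0^{d+\eta+\alpha} e^{-2\mu_0 L_0}$, which is strictly less than $e^{-\mu_0 L_0}$ once $L_0$ is large enough that $\mu_0 L_0$ dominates $(d+\eta+\alpha)\log L_0$ plus $\log C_d$.

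There is no substantive obstacle here: the argument is a clean application of the filter lemma through Proposition \ref{prop:rhosmall} followed by a volume count. The only delicate step is the budgeting of the factor of $2$ in the exponent---shrinking $C_\kappa$ gains an extra $e^{-\mu_0 L_0}$ that must absorb both the polynomial boundary count $L_0^{d+\eta+\alpha}$ and the worst-case weight $\lambda_u \leq L_0^\alpha$ from the horizontal integration. This absorption is what fixes the required largeness of the base scale $L_0$, and the same type of accounting will recur in the inductive multiscale steps.
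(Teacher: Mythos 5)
Your proof matches the paper's almost line for line: both derive (\ref{eq:bulk}) by applying Proposition \ref{prop:rhosmall} with $2\mu_0$ in place of $\mu$ (and you correctly observe that the $\min$ in (\ref{eq:rhosmall}) is really a $\max$, as the paper's own iteration count shows), and both then bound $Q[(x,0);B_{L_0,T_0}(x)]$ by pairing the pointwise boundary estimate $e^{-2\mu_0 L_0}$ with a polynomial boundary count, absorbing the prefactor for $L_0$ large. The only cosmetic differences are that you use the slightly weaker vertical-boundary bound $e^{-2\mu_0 L_0}$ where the paper keeps $e^{-2\mu_0 T_0}$, and you carry an extra $L_0^\alpha$ from $\lambda_u \le \epsilon_0^{-1}$ (the paper's $Q$-definition implicitly contains the rate $\kappa\lambda_U$, so that factor collapses to $C_\kappa$), neither of which affects the conclusion.
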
	

   \begin{remark}
   	Note in the above theorem, the length of the initial scale $L_0$ is chosen first and then the parameter $\kappa$ is selected (as $\epsilon_0 = L_0^{-\alpha}$) with respect to $L_0$. Proposition \ref{prop:bulk} will follow from Proposition \ref{prop:rhosmall}.
   \end{remark}   

   \begin{proof}\
   	  Given $\mu_0$, the second statement follows by taking $C_\kappa $ sufficiently small to apply Proposition \ref{prop:rhosmall}, which  immediately obtains (\ref{eq:bulk}).  Now to obtain regularity, apply Proposition \ref{prop:rhosmall} to points on the boundary of $B_{L_0,T_0}(x)$. We have, for $(y,\pm T_0)\in \partial_{|W}^\cV B_{L_0,T_0}$ 
   	\[ P[(x,0)\lra(y, \pm T_0) ] 
   	   < e^{- 2\mu_0 T_0 }. \]
    For $(y,s) \in \partial^{\cE-}_{|W} B_{L_0,T_0}(x)$
   \[ P[(x,0)\lra(y, s) ] 
   	  < e^{- 2\mu_0L_0}. \]
   	    Applying the definition (\ref{eq:Q}) we have,
     \[   Q[(x,0) ; B_{L_0,T_0}(x)] \leq  
         \left(   C_dL_0^d  e^{ -  \mu_0( 2 T_0  -  L_0 )}   +  C_d L_0^{\eta + d-1} e^{ -\mu_0 L_0} \right)  e^{ - \mu_0 L_0}.    \]
        Then for sufficiently large $L_0$ the prefactor is bounded by 1, the second factor obtains the desired decay. \qed
   \end{proof}
  
  We will present an induction on the sequence of scales $L_0,L_1,L_2,...$, as defined in (\ref{Lscales}). The following condition is the base case on scale $L_0$ which follows from Proposition \ref{prop:bulk}.
  
    \begin{condition}[Initial regularity]\label{ic0}
    	Let $\kappa = C_\kappa(\mu_0) \epsilon_0 $ for any    $\mu_0 > 0$. If $B_{L_0,T_0} \cap \bW \subset \bL_\cV$ is  $\epsilon_0$-non-resonant then $x\in \bW $ is $ (\mu_0,L_0|\bW)$-regular.
    \end{condition}
    
    The following condition formulates the induction statement on the sequence of scales.
    
    \begin{condition}[Level $k$ regularity]\label{ick}
    	For  any $x\in \cV$ and $i = 0,1,..,k$, if $B_{L_i ,T_i}(x,t) \cap \bW $ is $\epsilon_{i}$ nonresonant then $(x,t)$ is $(\mu_i,L_i|\bW)$ regular.
    \end{condition}
    
    From Condition \ref{ick} we induct to regularity on the next level. 
    
    \begin{condition}[Level $k+1$ regularity]\label{ick1}
    	For  any $x\in \cV$  if $B_{L_{k+1} ,T_{k+1}}(x,t) \cap \bW $ is $\epsilon_{k+1}$-non-resonant then $(x,t)$ is $(\mu_{k+1},L_{k+1}|\bW)$ regular.
    \end{condition}
    
    It is helpful to prove the first induction step concretely before proceeding to higher scales.
    
    \begin{proposition}\label{P:ind0} 
    Suppose the environment process $\{ (\bbT^{\nu_i}, \bT_i, h_i ) \}_{i = 0 }^d$ is $(\zeta, \sigma) $-proper. Let $L_0$ be sufficiently large, and suppose Condition \ref{ic0} holds. Then,  if  $B_{L_{1},T_1}(x,t)\cap \bW$ is $\epsilon_{1} $ non-resonant,  $x$ is $(\mu_{1},L_{1}|\bW)$ regular. 
    \end{proposition}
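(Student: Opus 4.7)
The plan is to control $Q[(x,t); B_{L_1,T_1}(x,t) \mid \bW]$ by separating the horizontal (edge) and vertical (top/bottom) boundary contributions, using $\epsilon_0$-non-resonant regions as a bulk governed by Condition \ref{ic0} and treating a bounded number of $\epsilon_0$-resonant columns as an obstruction to be threaded by the filter inequality. The key input is Proposition \ref{res-sep}, which for a $(\zeta,\sigma)$-proper environment bounds the set $\cR_0$ of $\epsilon_0$-resonant sites and edges inside $B_{L_1,T_1}(x,t)$ by cardinality at most $R$. Off $\cR_0$, the environment is $\epsilon_0$-non-resonant, so any $(y,s)$ whose $L_0$-neighborhood avoids $\cR_0$ is $(\mu_0,L_0\mid \bW)$-regular.

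For the horizontal boundary I would iterate Lemma \ref{lem:liebfilter} at scale $L_0$. A path from $(x,t)$ to the edge boundary of $B_{L_1,T_1}$ must cross at least $\lfloor L_1/L_0\rfloor$ successive $L_0$-blocks; the blocks whose centers are $L_0$-far from $\cR_0$ each contribute a factor $e^{-\mu_0 L_0}$ by $L_0$-regularity, while the at most $O(R)$ blocks meeting a resonant column are bounded crudely by $1$. Summing the BK filters produces a horizontal estimate of order $\exp(-\mu_0(L_1 - O(RL_0)))$, which for $L_0$ large and $\beta<1-\gamma^{-1}$ is no larger than $\tfrac12 e^{-\mu_1 L_1}$ since $\mu_1 = \mu_0(1 - L_1^{-\beta})$.

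The more delicate step is the vertical boundary, which I would reduce to an iterated slab estimate in the spirit of the forthcoming Proposition \ref{prop:layer0}. Partition $[t-T_1, t+T_1]$ into $2T_1/T_0$ slabs of height $T_0$. For percolation across one slab, apply Lemma \ref{lem:liebfilter} to split paths into those that avoid an $L_0$-neighborhood of $\cR_0$ (and hence inherit the factor $e^{-\mu_0 T_0}$ from $L_0$-regularity on each column crossed) and those that thread the resonant columns. For the latter, $\epsilon_1$-non-resonance of $\bW$ forces the death rate at each site of $\cR_0$ to be at least $\epsilon_1/\kappa = C_\kappa^{-1}\epsilon_1/\epsilon_0$, so within a $T_0$-window the probability of simultaneous survival at all $R$ resonant columns is at most $1 - (c\epsilon_1)^R$ after intersecting the $R$ cut events via FKG. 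Iterating over the $2T_1/T_0$ slabs gives a vertical contribution bounded by $\exp\{-c'(c\epsilon_1)^R\, T_1/T_0\}$.

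It remains to verify that both estimates beat $e^{-\mu_1 L_1}$. Substituting $\epsilon_1 = L_1^{-\alpha}$, $T_1/T_0 = L_1^{\eta(1-\gamma^{-1})}$, this reduces to $\eta(1-\gamma^{-1}) > 1 + R\alpha$ (plus lower-order corrections in $R$), which is precisely what (\ref{alpha00})--(\ref{eta00}) guarantee since $\tfrac{\gamma+R}{\gamma-R}>\tfrac12$ with $\gamma>R$. The principal obstacle is the vertical slab estimate: one must pay the entrance cost into each of the $R$ resonant columns under $\epsilon_1$-non-resonance without losing more than a polynomial factor, and simultaneously justify the joint BK/FKG treatment of the $R$ columns without double counting. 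This is exactly what dictates the lower bound on $\eta$ in (\ref{eta00}) and, together with Condition \ref{ic0}, closes the induction step.
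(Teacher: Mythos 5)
Your proposal follows the paper's own architecture essentially step for step: reduce to separate control of the horizontal and vertical boundary contributions to $Q[(x,t);B_{L_1,T_1}\mid\bW]$, treat the horizontal boundary by BK-filtering paths according to their visits to the $O(R)$ resonant columns guaranteed by Proposition \ref{res-sep} and applying $(\mu_0,L_0)$-regularity off $\cR_0$ (this is Proposition \ref{prop:Hbdry0}), and treat the vertical boundary by iterating a single-slab estimate of the form $1-c^R\epsilon_1^{2R}$ over the $T_1/T_0$ layers, where the slab estimate is an FKG combination of absence-of-percolation in the non-resonant bulk with ``break'' events at the resonant lines under $\epsilon_1$-non-resonance (Propositions \ref{prop:Vbdry0}, \ref{prop:layer0}, \ref{prop:res0a}--\ref{prop:res0c}). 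One quantitative correction: the per-slab blocking probability is $c^R\epsilon_1^{2R}$, not $(c\epsilon_1)^R$, because each break event must place a death inside a window of length $\xi=\epsilon_1/\kappa$ (kept this short to suppress bonds at adjacent resonant edges) at a site whose death rate is only $\gtrsim \epsilon_1/\kappa$, so the cost per resonance is $(\epsilon_1/\kappa)^2\sim\epsilon_1^2$; accordingly your parameter check should read $\eta(1-\gamma^{-1})>1+2R\alpha$, which is still comfortably guaranteed by (\ref{eta00}).
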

    
    We will prove  Proposition \ref{P:ind0}  in Section \ref{sec:lev0}. Finally, we have the induction on higher scales which we will prove in   Section \ref{sec:levk}.

\begin{proposition}\label{P:indk}
    Suppose the environment process $\{ (\bbT^{\nu_i}, \bT_i, h_i ) \}_{i = 0 }^d$ is $(\zeta, \sigma) $-proper. Let $L_0$ be sufficiently large, and suppose   Condition \ref{ick} holds for given $k$. Then Condition \ref{ick1} holds.
\end{proposition}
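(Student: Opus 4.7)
The plan is to follow the same skeleton that worked for the base step (Proposition \ref{P:ind0}) but now feed in the inductive hypothesis at every inner scale $L_0,\dots,L_k$. Fix $(x,t)$ and write $B := B_{L_{k+1},T_{k+1}}(x,t) \cap \bW$, assumed $\epsilon_{k+1}$-non-resonant. I will bound
\[ Q[(x,t);B]
   = \int_{U \in \partial^\cE_{|\bW} B} \lambda_U\, P[(x,t)\lra \Upsilon^\cV_{1/2}(U)\cap B\mid B]\, dU
    + \sum_{X \in \partial^\cV_{|\bW} B} P[(x,t)\lra X \mid B]
\]
by treating the horizontal ($\partial^\cE$) and vertical ($\partial^\cV$) contributions separately, showing each is $\leq \tfrac12 e^{-\mu_{k+1}L_{k+1}}$.

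For the horizontal piece I will invoke Proposition \ref{prop:Hbdryk}: identify the $\epsilon_{k+1}$-non-resonant bulk and the resonant set $\cR_k \subset B$ consisting of all edges/vertices that are $\epsilon_i$-resonant for some $i\leq k$ but lie within $L_k$ of an $\epsilon_k$-resonance (the Diophantine/admissability condition, via Proposition \ref{res-sep}, caps $|\cR_k|$ by a constant depending only on $R$). Paths from $(x,t)$ to a horizontal boundary point $U$ are filtered via the BK inequality (Lemma \ref{lem:liebfilter}) by their last visit to $\cR_k$: away from $\cR_k$ the level-$k$ regularity of Condition \ref{ick} applies, producing an $e^{-\mu_k L_k}$ factor per step through disjoint $L_k$-blocks, of which $(L_{k+1}/L_k) = L_k^{\gamma-1}$ fit along the shortest route; the price paid at each resonance is at most $\epsilon_{k+1}^{-R}$ from the bound $\lambda_u < \epsilon_{k+1}^{-1}$ and the $|\cR_k|\leq R$ multiplicity. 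The choice of $\alpha,\gamma,\tau$ in (\ref{alpha00})--(\ref{eta00}) is exactly what makes $\mu_k L_k(\gamma-1)$ dominate $R\alpha\log L_{k+1} + \log(L_{k+1}^{d+1})$, which yields decay $e^{-\mu_{k+1}L_{k+1}}$ after absorbing the $L_{k+1}^{-\beta}$ loss.

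The harder piece is the vertical contribution, handled by Proposition \ref{prop:layerk}. Here I slice $B$ into $T_{k+1}/T_k = L_k^{\eta(\gamma-1)}$ horizontal layers of height $T_k$, and iterate a single-layer bound of the form
\[ \sup_{X} P[\,\text{percolation from a layer's lower face to its upper face}\mid B\,]
   \leq 1 - (c\epsilon_{k+1})^{p} \]
for some fixed $p$ determined by $R$. On each single layer, the resonant set $\cR_k$ contributes points that can only absorb/emit a path with probability $1 - \delta_{\min}/\kappa$ bounded uniformly away from $1$ by $1-(c\epsilon_{k+1})^p$; outside $\cR_k$ the induction hypothesis forces the single-layer crossing probability to be exponentially small in $L_k$, hence negligible. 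Iterating through all $L_k^{\eta(\gamma-1)}$ layers yields an upper bound
\[ \bigl(1 - (c\epsilon_{k+1})^p\bigr)^{L_k^{\eta(\gamma-1)}} \leq \exp\bigl(- L_k^{\eta(\gamma-1)}(c\epsilon_{k+1})^p\bigr), \]
and the condition $\eta > \tfrac{\gamma}{\gamma-1}(1 + 2\alpha R\tfrac{\gamma+R}{\gamma-R})$ in (\ref{eta00}) is precisely what guarantees $L_k^{\eta(\gamma-1)} \epsilon_{k+1}^{p}$ beats $\mu_{k+1}L_{k+1}$, giving the required vertical bound.

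The main obstacle will be the vertical-layer iteration: one must verify that the permissible \emph{number of distinct resonances} per layer (controlled by Proposition \ref{res-sep}) is truly bounded by a constant independent of $k$, because otherwise the per-layer loss $(c\epsilon_{k+1})^p$ would grow superexponentially in $k$ and the choice of $\eta$ would fail. This is where the properness of the environment and the choice of $R = 2 \vee (R_v + R_e)$ in (\ref{Rvals}) enter essentially: Diophantine separation forces resonances of the various sub-tori to be spread apart, so only $O(R)$ of them can cluster inside any $L_{k+1}$-block. Once that uniform bound is established, combining the horizontal and vertical estimates and absorbing subexponential prefactors into the contraction $\mu_{k+1} = \mu_k(1 - L_{k+1}^{-\beta})$ completes the induction.
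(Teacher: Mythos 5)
Your overall skeleton is correct and matches the paper: $Q$ is split into horizontal and vertical boundary contributions, the horizontal piece is handled by filtering paths through resonances with the BK inequality (Proposition \ref{prop:Hbdryk}), and the vertical piece is handled by slicing into $T_{k+1}/T_k$ layers and iterating a per-layer bound $1 - (c\,\epsilon_{k+1})^p$ (Propositions \ref{prop:layerk} and \ref{prop:Vbdryk}). You also correctly identify the per-layer exponent $p$ as the crucial quantity to keep $k$-independent and cite the right parameter inequalities (\ref{p00})--(\ref{q00}).

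However, you have misidentified the \emph{resolution} of the resonance-counting obstacle, and your proposed route would fail there. You assert that ``only $O(R)$ [resonances] can cluster inside any $L_{k+1}$-block'' and that the count is ``bounded by a constant independent of $k$.'' That is true only for $\epsilon_k$-resonances: Proposition \ref{res-sep} gives $|\cR_k| \leq R$. But to apply Condition \ref{ick} away from the resonant core and to control the break events $F_w^{(2)}$, one must also track $\epsilon_i$-resonances for every $i < k$ that lie near $\cR_k$, since those spoil the regularity of neighbouring sites. The paper does this through the recursive construction of $\cR_i'$, $\cR_i''$ and the sets $Z_i$, arriving at the bound (\ref{cricount}): $|\cR_i| \leq 2R^{\,k+1-i}$, hence $|\cR_0'| \leq 4R^{k+1}$ --- which grows exponentially in $k$, not constantly. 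The reason the induction survives is not that the count is constant, but that the per-scale break-event penalty $\epsilon_{i+1} = L_0^{-\alpha\gamma^{i+1}}$ shrinks double-exponentially, so the product $\prod_{i=0}^{k-1}\epsilon_{i+1}^{4R^{k+1-i}}$ telescopes: the exponent of $\epsilon_{k+1}$ is $4R\sum_{j\geq 1}(R/\gamma)^j = 4R^2/(\gamma - R)$, a $k$-independent constant precisely because $\gamma > R$ (equation (\ref{gamma00})). Your accounting, restricted to $\cR_k$ alone, would yield a per-layer exponent $p \approx 2R$ and you would then be unable to justify the breaks adjacent to low-scale resonances inside $\cR_0'$; conversely, if you tried to include all scales naively you would see the count diverge in $k$ and conclude $p$ must grow. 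The missing idea is the geometric-series cancellation between the cascading resonance count and the shrinking resonance penalties, which is the content of Proposition \ref{prop:resib} and the proof of Proposition \ref{prop:layerk}. The choice $\gamma > R$ is what enables this cancellation and is used essentially here, not merely through Proposition \ref{res-sep}.

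Two minor inaccuracies that do not affect the verdict: the filtration in Lemma \ref{lem:liebfilter} is by successive first passages through \emph{all} visits to the resonant lines (yielding the sequences $\bA$ in (\ref{eq:VA})), not just the last visit; and the per-resonance price in Proposition \ref{prop:Hbdryk} is of order $T_{k+1}/\epsilon_k$, not $\epsilon_{k+1}^{-R}$.
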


   \subsection{Proper environments are regular}
   \label{proper reg}
     \begin{proposition} \label{res-sep}
    Suppose the environment process $\{ (\bbT^{\nu_i}, \bT_i, h_i ) \}_{i = 0 }^d$ is $(\zeta, \sigma) $-proper. Then there is a finite $L_0$ so that for any $x \in \cV $ and $L > L_0 $ the box $\Lambda_L^{\cV}(x)$ has at most $R_v$, respectively $ R_e$, many $ L^{-\alpha/\gamma}$-resonant sites, respectively edges.  
   \end{proposition}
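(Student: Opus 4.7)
The plan is to combine the $\sigma$-admissibility of each sampling function $h_i$ with the $\zeta$-Diophantine property of each shift $\bM_i$ so that, inside a ball $\Lambda_L^\cV(x_0)$, at most one vertex (respectively one edge in each coordinate direction) can be attached to any single zero of the corresponding sampling function.

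First I would use admissibility to localize resonances near the zeros. Fix $i\in\{0,1,\ldots,d\}$ and enumerate $h_i^{-1}(0)=\{\theta_i^{(1)},\ldots,\theta_i^{(R_i)}\}$. Since $h_i\in\cC^+_{fin}(\bbT^{\nu_i})$, continuity and compactness give $\rho_i,m_i>0$ with $h_i\ge m_i$ outside the $\rho_i$-neighborhoods of its zeros, and by $\sigma$-admissibility I may choose a single $\tilde\sigma<\sigma$ and (after shrinking $\rho_i$) arrange that
\[ |\log h_i(\theta')|\le \tilde\sigma\,|\log d(\theta',\theta_i^{(j)})| \qquad \text{whenever } d(\theta',\theta_i^{(j)})<\rho_i.\]
Consequently, once $\epsilon<\min_i m_i$, the condition $h_i(\theta_i+\bM_i x)<\epsilon$ forces $d(\theta_i+\bM_i x,\theta_i^{(j)})\le \epsilon^{1/\tilde\sigma}$ for some $j$. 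Setting $\epsilon=L^{-\alpha/\gamma}$ will satisfy this bound once $L$ exceeds a threshold depending only on the $m_i$.

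Next I would apply the Diophantine condition to exclude two such resonances at a common zero. Suppose $x_1\neq x_2\in\Lambda_L^\cV(x_0)$ are both $L^{-\alpha/\gamma}$-resonant with respect to the same $\theta_i^{(j)}$. The triangle inequality yields $d(\bM_i(x_1-x_2),0)\le 2\epsilon^{1/\tilde\sigma}$, while the $\zeta$-Diophantine bound gives
\[ d(\bM_i(x_1-x_2),0)\ge \frac{C_\zeta}{|x_1-x_2|^\zeta}\ge \frac{C_\zeta}{(2L)^\zeta}.\]
Combining these inequalities forces $L^{\alpha/(\gamma\tilde\sigma\zeta)}\lesssim L$, i.e.\ $\alpha/(\gamma\tilde\sigma\zeta)\le 1$. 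The proper-environment hypothesis $\alpha/(\sigma K)>\gamma$ together with $K\ge\zeta$ delivers $\alpha>\sigma\zeta\gamma$, so $\tilde\sigma$ may be chosen close enough to $\sigma$ (but strictly below it) to retain $\alpha>\tilde\sigma\zeta\gamma$. For all $L$ above some threshold $L_0$ this contradicts the previous display, so no pair of resonant sites can accumulate at a single zero of $h_0$. Summing over the $R_0=R_v$ zeros gives the vertex bound, and running the identical argument in each of the $d$ coordinate directions, using $\lambda_{x+\tfrac12\ei}^{-1}=h_i(\theta_i+\bM_i x)$ for $i=1,\ldots,d$, gives the edge bound $R_1+\cdots+R_d=R_e$.

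The only delicate point is the parameter bookkeeping: admissibility only gives control with some $\tilde\sigma$ strictly less than $\sigma$, so one must confirm the strict inequality $\alpha>\gamma\zeta\sigma$ supplied by the regular-environment conditions survives the loss in passing from $\sigma$ to $\tilde\sigma$. The slack encoded by $K\ge\zeta$ together with $\alpha/(\sigma K)>\gamma$ is exactly what provides the needed room.
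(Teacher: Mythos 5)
Your proof is correct and follows essentially the same route as the paper's: pigeonhole $R_i+1$ resonances onto the $R_i$ zeros of $h_i$, use admissibility to confine each resonant phase within distance $\epsilon^{1/\tilde\sigma}$ of a zero, then contradict the Diophantine lower bound on $d(\bM_i(x_1-x_2),0)$ using the parameter inequality $\alpha > \gamma\sigma K \ge \gamma\sigma\zeta$. Your additional care in extracting a uniform $\tilde\sigma<\sigma$ from the $\limsup$ in the admissibility definition is a welcome tightening of a step the paper treats somewhat informally, but it is not a different argument.
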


   \begin{proof} 
     From the $\sigma$-admissible definition we have that there is $c > 0 $ so that for all $\wt\theta \in h_i^{-1}(0) $ we have
     \[   h_i(\theta') > c [d(\wt \theta,\theta')]^\sigma . \]
     Suppose for some $i$ there are points $x_j$, for $j = 1,..,R_i + 1$, so that $ h_i(\bM x_j + \theta) < L^{-\alpha}.$ Then there is some $\wt \theta \in h_i^{-1}(0) $ so that there are two values of $j$( say $j = 1,2$) so that 
     \[ 
          d(\bM_i x_j +  \theta, \wt \theta) 
         < \frac1c [ h_i(\bM_i x_j +  \theta)]^{\frac1\sigma}  
         < \frac1c L^{ - \frac{ \alpha }{ \sigma \gamma} } \] 
     On the other hand, from the Diophantine condition, 
     \[    \frac{ C_\zeta }{  \|x_1 - x_2\|^{\zeta} } 
         <
         d(\bM(x_1 - x_2),0) = 
        d(\bM_i x_1 + \theta, \bM_i x_2 + \theta) 
        \leq
        d(\bM_i x_1 +  \theta, \wt \theta) 
        + d(\bM_i x_2 +  \theta, \wt \theta) 
       . \]
     Thus, for some $C_1 > 0$,
     \[  
      C_1 L^{\frac{\alpha}{\sigma\gamma \zeta }} 
                                      < \|x_1 - x_2\|.  \]
     Thus, as $\alpha > \gamma\sigma\zeta,$ for large enough $L $, there are at most $R_0$ vertex  resonances and $R_1 + \cdots R_d $ edge resonances in any box $\Lambda_{L} ( x) $. \qed
   \end{proof}

   \begin{proposition}\label{pr:BC}
    Suppose the environment process $ \{ (\bbT^{\nu_i},\bT_i,h_i) \}_{i = 0}^d$ is $(\zeta, \sigma)$-proper. Then for any small enough $ \epsilon > 0 $. For almost any $(\theta_0,..,\theta_d)\in \bbT^{\nu_0 + \cdots + \nu_d }$ and any  $x$, there is $K_x $ so that for all $ k > K_x$ the region 
   $  \Lambda_{L_{k+1}^{1+\epsilon}}(x)$ is $(\mu_k,L_k)$-regular     
   \end{proposition}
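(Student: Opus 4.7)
The plan is a first Borel--Cantelli argument on the phase torus $\bbT^{\nu_0 + \cdots + \nu_d}$, carried out uniformly in $x$. First I will invoke the induction already in place: by Condition \ref{ic0} (with $\kappa = C_\kappa \epsilon_0$) and repeated application of Proposition \ref{P:indk}, Condition \ref{ick} holds for every $k \geq 0$. In particular a point $y$ is $(\mu_k, L_k)$-regular whenever $B_{L_k, T_k}(y)$ is $\epsilon_k$-non-resonant. Because resonance depends only on the spatial projection, $(\mu_k, L_k)$-regularity of every $y \in \Lambda^{\cV}_{L_{k+1}^{1+\epsilon}}(x)$ follows from the absence of $\epsilon_k$-resonant sites and edges in the enlarged spatial region $\Lambda^{\cG}_{L_{k+1}^{1+\epsilon} + L_k + 1}(x)$; call this bad event $\cA^{(x)}_k \subset \bbT^{\nu_0 + \cdots + \nu_d}$. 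The problem reduces to showing $\sum_k \mathrm{Leb}(\cA^{(x)}_k) < \infty$.

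Next I would estimate the Lebesgue measure of $\cA^{(x)}_k$ by union bound. The $\sigma$-admissibility of $h_i$ together with the finiteness of $h_i^{-1}(0)$ gives, for small $\epsilon$,
\[
\mathrm{Leb}\{\theta \in \bbT^{\nu_i} : h_i(\theta) < \epsilon\} \leq C R_i \epsilon^{\nu_i/\sigma},
\]
since the sub-level set is contained in $R_i$ Euclidean balls of radius $O(\epsilon^{1/\sigma})$. Because the field value at a site or edge is a measure-preserving translate of $\theta_i$, the single-point resonance probability is of the same order. Summing over the $O(L_{k+1}^{(1+\epsilon)d})$ sites and edges in the enlarged region and using $\nu \leq \min_i \nu_i$ uniformly yields
\[
\mathrm{Leb}(\cA^{(x)}_k) \leq C\, L_{k+1}^{(1+\epsilon)d}\, \epsilon_k^{\nu/\sigma} = C\, L_k^{(1+\epsilon)\gamma d - \alpha \nu / \sigma}.
\]

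To close the argument I would verify that the exponent is strictly negative for $\epsilon$ small. Hypothesis (\ref{gamma00}) gives $\alpha > \sigma K \gamma \geq \sigma (d/\nu) \gamma$ with strict inequality, so $\alpha\nu/\sigma > \gamma d$; choosing $\epsilon > 0$ with $(1+\epsilon)\gamma d < \alpha\nu/\sigma$ makes the exponent strictly negative. Since $L_k = L_0^{\gamma^k}$ grows doubly exponentially, the resulting bound is summable. The first Borel--Cantelli lemma then produces, for each fixed $x$, a full-measure set of phases on which $\cA^{(x)}_k$ fails for every $k > K_x$; taking the countable intersection over $x \in \cV$ preserves full measure and gives the conclusion.

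The main obstacle is the parameter arithmetic in the final step, namely confirming that the hierarchy in (\ref{alpha00})--(\ref{gamma00}) is tight enough to force $\alpha \nu/\sigma > \gamma d$ strictly, leaving room for a positive $\epsilon$. Everything else is routine: the standard single-point resonance estimate from $\sigma$-admissibility, a volume union bound, and the by-now-available Condition \ref{ick} to convert resonance absence into regularity.
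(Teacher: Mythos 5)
Your proof is correct and follows essentially the same route as the paper: a single-site resonance measure estimate of order $\epsilon^{\nu/\sigma}$ coming from $\sigma$-admissibility, a union bound over the $O(L_{k+1}^{(1+\epsilon)d})$ sites and edges in a slightly enlarged box, conversion of non-resonance into $(\mu_k,L_k)$-regularity via Proposition~\ref{P:indk}, and first Borel--Cantelli using the doubly-exponential growth of $L_k$. You supply a few details the paper leaves implicit --- the sub-level-set containment in $R_i$ balls of radius $O(\epsilon^{1/\sigma})$, the parameter check $\alpha\nu/\sigma > \gamma d$ from~(\ref{gamma00}) and $K \geq d/\nu$, and the final countable intersection over $x$ --- but the argument is the same.
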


   \begin{proof} Let $\nu = \min\{\nu_0,\nu_1,\ldots,\nu_d\}$,
    the probability a given site or edge is $\epsilon$-resonant is bounded by  $ C \epsilon^{ \nu / \sigma}$. Thus, the probability $\Lambda_{2 L_{k+1}^{1 + \epsilon}}(x) $ is $\epsilon_k$-resonant  is bounded by $C L_k^{ d(1+ \epsilon) \gamma  -  \nu \alpha/\sigma }$. But $ \Lambda_{2L_{k+1}^{1 + \epsilon}}$ being $\epsilon_k$ non-resonant implies  implies $ \Lambda_{L_{k+1}^{1 + \epsilon}}$  is $(\mu_k,L_k)$ regular by Proposition \ref{P:indk}.  Thus 
    \[  \sum_k P[\Lambda_{L_{k+1}^{1+\epsilon}}(x) \tn{ is not  } (\mu_k, L_k) \tn{ regular}]
    \leq \sum_k P[ \Lambda_{2L_{k+1}^{1 + \epsilon}} \tn{ is } \epsilon_k\tn{-resonant} ]  < \infty \]
    By the Borel Cantelli Theorem. there is $K_x$ so that $k> k_x$ implies $ \Lambda_{L_{k+1}^{1+\epsilon}}(x) $ is a $ (\mu_k,L_k)$regular region for all $ k > K_x$.\qed
   \end{proof}

  \begin{proof}[Proof of Theorem \ref{th:diophantine}]
    Let $b >> 2$ be a constant to be fixed below. Given $x$ let $K_x$ be the constant defined in Proposition \ref{pr:BC}. Let $C_x = \exp\{ \mu b L_{K_x + 1} \} $.  Suppose $k $ is such that
    \[  b L_{k} \leq | \max \{ |y - x| , |s-t|^{1/\eta} \} | < b L_{k+1}    \]
    if $k \leq K_x $ the Theorem is immediate, so suppose $k > K_x$. By Proposition \ref{pr:BC}, the region $\Lambda_{ b L_{k+1}}$ is $(\mu_k,L_k)$-regular.  Obtaining decay in the $(\mu_k,L_k)$ regular region is similar to the proof of Proposition \ref{prop:rhosmall}.  On the first step we have 
    \[  P[ (x,t) \lra (y,s) ] < 
    Q[ (x,t), B_{L_k, T_k}(x) ] \sup_{ (x',t') \in \partial^{H+} B_{L_k,T_k}(x)} P[(x',t') \lra (y,s)] \]
     As $\Lambda_{bL_{k+1}}(x) $ is $\epsilon_k$ non-resonant we have
     \[
     Q[(x,t),B_{L_k,T_k}(x)  ] < 
        e^{ - \mu_k L_k }.
     \]
   We apply the same argument $n$ times for
   \be \label{nmsa-BC}  n = \max \left\{\left\lfloor \frac{|y  - x|}{ L_k + 1 } \right\rfloor ,\left\lfloor \frac{| s - t|}{ L_k^\eta} \right\rfloor   \right\}  \ee
   which obtains 
   \[  P[(x,t) \lra (y,s)] \leq \left( \frac{\kappa}{\epsilon_k} e^{-\mu_kL_k} \right)^n \leq \exp \left( - \mu_k \left( 1 - \frac{ \alpha \log L_k}{ \mu_k L_k } \right) n L_k \right).  \]
   Now return to (\ref{nmsa-BC}). For $n$ defined by the time difference we have 
   \[  n = \left\lfloor \frac{ |s-t|  }{ L_k^\eta } \right\rfloor \geq \frac{|s- t| - L_k^\eta}{L_k^\eta} \geq \frac{ |t -s|^\tau }{ L_k}  \] 
   provided $b^{\eta} > 1 + b^{\tau\eta}$. For $n$ defined by spatial difference and $c = \frac{L_0}{ L_0+ 1} - b^{-1}$ we have 
   \[  n \geq  \frac{|y- x|  - (L_k +1)}{L_k+1} \geq c \frac{ |x-y| }{ L_k }  \]
   Thus we have the result 
   \[  P[(x,t) \lra  (y,s) ] < e^{-\mu\max\{|y-x|,|s-t|^{\tau} \}}  \]
   for $ \mu < \mu_\infty c \left( 1 - \alpha \frac{ \log L_0}{\mu_\infty L_0} \right)$. \qed   
  \end{proof}

   \section{Multiscale Analysis} \label{msa}
                  
  The strategy of multiscale analysis is to prove percolation regularity on an increasing sequence of scales and sets of phases. The initial lattice scale  $L_0$ must be chosen sufficiently large to allow the induction to proceed on higher scales. The tuning parameter $\kappa$ must be chosen sufficiently small to establish regularity on $\Lambda_{L_0}(0)$ for a large set of initial phases $\bth \in \bbT^{\bnu}$.

   We introduce parameters $p$ and $q$. Observe from (\ref{eta00}) that
   \[    \frac{(\gamma-1) \eta  - \gamma}{\alpha  \gamma }   >   2 R \frac{\gamma + R}{\gamma - R}  \] 
   and select $p$ such that
   \be\label{p00}
      \frac{(\gamma-1) \eta  - \gamma}{\alpha  \gamma } > p >   2 R \frac{\gamma + R}{\gamma - R}  
   \ee
   From the first inequality we have 
   \[  (\gamma - 1)\eta - \gamma > p\alpha\gamma  \]
   then select $q$ so that
   \be 
     \label{q00}
     \frac{(\gamma -1) \eta  - p\alpha \gamma }{\gamma} 
           > q > 1.
   \ee       

   \subsection{Percolation and low dimensional defects on the first level}\label{sec:lev0}
      
       In this section we prove Proposition \ref{P:ind0}. Thus let $x$ be chosen so that $\Lambda_{L_1}(x)$ is not $\epsilon_1$ resonant, we then show that $x$ is $(\mu_1,L_1) $ regular. By Proposition \ref{res-sep}, there are only $R_v$ sites and $R_e$ edges which are $\epsilon_0$ resonant in $\Lambda_{L_1}(x)$. Thus we will use that the bulk of the region is $(\mu_0, L_0)$ regular (for small enough $\kappa$) to control and separate the portion of the environment which is resonant.

       	We will break the proof of Proposition \ref{P:ind0}  into two parts: percolation to the edge  and vertex boundary. 
         First we state our bound on percolation to points in the inner edge boundary.

       \begin{proposition}\label{prop:Hbdry0}
       
       		Suppose  Condition \ref{ic0} holds and the environment $(\delta,\lambda)$ is $L_1$ nice. Then for   $(y,s) \in \partial^{\cE-} B_{L_1,T_1} $
            \[
    P(  (x,0)  \lra (y,s)| B_{L_1,T_1}(x))
      <   C L_0^{(\eta \gamma + \alpha) R }
       e^{ - \ol \mu_0 L_1}     
     \]    
     for $\ol \mu_0 = \mu_0 
     \left[1 - R  \frac{L_0}{L_1}\right]$ where $R  =  R_e + R_v $.
       \end{proposition}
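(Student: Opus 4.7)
The plan is to iterate Lemma \ref{lem:liebfilter} at scale $L_0$ while isolating the (up to) $R$ resonant defects living inside $\Lambda_{L_1}(x)$. By Proposition \ref{res-sep}, the $L_1$-nice assumption implies that the set $\cR$ of $\epsilon_0$-resonant sites and edges in $\Lambda_{L_1}(x)$ has cardinality at most $R = R_v + R_e$. Away from $\cR$, every $L_0$-block is $\epsilon_0$-non-resonant, so Condition \ref{ic0} supplies the bound $Q[(z,u), B_{L_0,T_0}(z,u)|\bW] < e^{-\mu_0 L_0}$ whenever the working environment $\bW$ avoids $\cR$.

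Next, I would use the BK inequality (\ref{BK}) to partition the percolation event $(x,0) \lra (y,s)$ by the ordered sequence of visits the path makes to $\cR \times [-T_1,T_1]$. For each ordered tuple of visits $(r_1,s_1), \ldots, (r_k,s_k)$ with $k \leq R$, the corresponding event decomposes into a circle product
\[
\{(x,0) \lra (r_1,s_1)\} \circ \{(r_1,s_1) \lra (r_2,s_2)\} \circ \cdots \circ \{(r_k,s_k) \lra (y,s)\},
\]
whose probability is then bounded by the product of the segment probabilities. Each segment lives in a region from which the remaining resonances can be excluded, so it is amenable to iterated application of Lemma \ref{lem:liebfilter} on $L_0$-scale filter boxes, with each filter step producing a factor $e^{-\mu_0 L_0}$ by Condition \ref{ic0}.

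At each intermediate resonance visit the cost comes from the $Q$-term in (\ref{eq:Q}): integration over allowable times $s_i \in [-T_1,T_1]$ supplies $2 T_1 = 2 L_0^{\eta\gamma}$, and the weight $\lambda_{r_i} \leq \epsilon_0^{-1} = L_0^{\alpha}$ (or the trivial bound $1$ at a resonant vertex) supplies the remaining factor, giving $O(L_0^{\eta\gamma + \alpha})$ per resonance and $(L_0^{\eta\gamma + \alpha})^R$ in total. The non-resonant segments of the path must together cover spatial distance at least $L_1 - R L_0$, since at most $L_0$ of spatial length is absorbed at each resonance, and this produces the factor $\exp(-\mu_0(L_1 - RL_0)) = \exp(-\ol{\mu}_0 L_1)$ with $\ol{\mu}_0 = \mu_0(1 - RL_0/L_1)$. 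Summing over the finitely many ($\leq R!\,|\cR|^R$) possible orderings of resonance visits is absorbed into the constant $C$, yielding the stated bound.

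The main technical obstacle is the rigorous decoupling: one must verify via the disjoint-witness formalism of BK that the circle-product decomposition is legitimate, and then check that iterating Lemma \ref{lem:liebfilter} inside each segment really can be done with an environment $\bW$ that strips out the remaining resonances of $\cR$ so that every $L_0$-filter box encountered is indeed $\epsilon_0$-non-resonant. A secondary bookkeeping point is to verify that the $Q$-term at a resonance contributes the claimed $L_0^{\eta\gamma + \alpha}$ rather than a larger power of $L_0$ from the edge or vertex boundary of the relevant box.
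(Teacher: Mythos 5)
Your proposal reproduces the paper's own proof strategy essentially verbatim: define the $\epsilon_0$-resonant set $\cR_0$ inside $\Lambda_{L_1}(x)$ (bounded in size by Proposition \ref{res-sep}), filter paths by their ordered visits to the resonance lines $\cR_0\times[-T_1,T_1]$ via a circle-product decomposition and the BK inequality, bound each inter-resonance segment with the $L_0$-scale regularity from Condition \ref{ic0}, and collect the $Q$-term cost $\sim T_1/\epsilon_0 = L_0^{\eta\gamma+\alpha}$ per resonance together with the telescoped spatial decay $e^{-\mu_0(L_1 - O(R)L_0)}$; the paper packages exactly this in the events $\Psi_{\bA}$, the bound (\ref{eq:lev0:H2}), and the count ${R\choose r}r!$ on sequences in $\cA_r$. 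The one small imprecision is the parenthetical ``trivial bound $1$ at a resonant vertex'': the weight in the $Q$-term is always $\kappa\lambda_U$ integrated over edges on the boundary of the resonance line, and since those boundary edges are non-resonant the paper uses $\kappa\lambda_U\leq\kappa\epsilon_0^{-1}$ in the vertex case too, not $1$ — but as $\epsilon_0^{-1}>1$ this has no effect on the stated bound.
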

       The proof of Proposition \ref{prop:Hbdry0} is contained in Section \ref{sec:lateral0}.
        
        \begin{proposition} \label{prop:Vbdry0}
        	Suppose $L_0$ is sufficiently large, the environment $(\delta,\lambda)$ is $L_1$ nice,  $\Lambda_{L_1}(x)$ is $\epsilon_1$ non-resonant, and Condition \ref{ic0} holds.
        	Then
        	\[ P[ \{\Upsilon_{L_1}^{\cV}(x,0)  \lra \Upsilon_{L_1}^{\cV}(x,\pm T_1) | B_{L_1,T_1}(x) \}] <  
        	 e^{- L_1^{q} }. \]   
        \end{proposition}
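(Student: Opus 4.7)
The plan is to decompose $B_{L_1,T_1}(x)$ into $N := T_1/T_0 = L_0^{(\gamma-1)\eta}$ stacked slabs of vertical thickness $T_0$ and exploit independence of the Poisson processes on disjoint time strata. Set $I_n := [(n-1)T_0, nT_0]$ and let $F_n$ be the event that an open path inside $\Lambda_{L_1}^\cG(x)\times I_n$ connects $\Upsilon_{L_1}^\cV(x,(n-1)T_0)$ to $\Upsilon_{L_1}^\cV(x, nT_0)$. Any path realizing $\Upsilon_{L_1}^\cV(x,0)\lra\Upsilon_{L_1}^\cV(x, T_1)$ inside $B_{L_1,T_1}(x)$ must cross every intermediate slice $\Upsilon_{L_1}^\cV(x, nT_0)$ and hence triggers each $F_n$. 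Since the $F_n$ depend only on Poisson arrivals inside $\cG\times I_n$, they are mutually independent, giving
\[
 P[\Upsilon_{L_1}^\cV(x,0)\lra\Upsilon_{L_1}^\cV(x,T_1) \mid B_{L_1,T_1}(x)]\ \le\ \prod_{n=1}^N P(F_n),
\]
and the $-T_1$ side is symmetric.

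The crux is a uniform per-slab estimate $P(F_n)\le 1-(c\epsilon_1)^p$ for some $c>0$ and some power $p$ satisfying (\ref{p00}). By Proposition \ref{res-sep} the set $\cR_0$ of $\epsilon_0$-resonant sites and edges in $\Lambda_{L_1}^\cG(x)$ has cardinality at most $R$; because $\Lambda_{L_1}(x)$ is $\epsilon_1$-non-resonant by hypothesis, each resonant vertex satisfies $\delta\in[\epsilon_1,\epsilon_0)$ and each resonant edge satisfies $\lambda\in(\epsilon_0^{-1},\epsilon_1^{-1}]$. I will exhibit a blocking event $G_n\subset F_n^c$ with $P(G_n)\ge(c\epsilon_1)^p$ by requiring, for a carefully chosen sub-window $J_n\subset I_n$ of length $\delta t$: (i) every resonant vertex admits a death in $J_n$; (ii) every resonant edge admits no bond in $J_n$; and (iii) the resonant-column complement carries no bulk percolation from the bottom face of the slab to the top. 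Events (i) and (ii) are independent Poisson events whose joint probability, when $\delta t$ is tuned appropriately, balances to at least $(c\epsilon_1)^p$. Event (iii) is controlled by iterating Condition \ref{ic0} across the slab via Lemma \ref{lem:liebfilter}, and its failure has probability at most $e^{-\mu_0 T_0}$, overwhelmingly smaller than $(c\epsilon_1)^p$ for $L_0$ large.

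Combining the per-slab bounds yields $\prod_{n=1}^N P(F_n)\le(1-(c\epsilon_1)^p)^N\le\exp(-c^p\epsilon_1^p N)$. Substituting $\epsilon_1=L_0^{-\alpha\gamma}$ and $N=L_0^{(\gamma-1)\eta}$, the exponent equals $c^p L_0^{(\gamma-1)\eta-p\alpha\gamma}$, which by the definition (\ref{q00}) of $q$ dominates $L_0^{\gamma q}=L_1^q$ as required. The main obstacle is the per-slab estimate in step two: the delicate balance between resonant-vertex deaths and resonant-edge bond absences. Site deaths have rate $\kappa^{-1}\delta$ and favor a longer sub-window, whereas bond arrivals at resonant edges have rate as large as $\kappa\epsilon_1^{-1}$ and favor a very short one. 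Choosing $\delta t\sim\kappa\epsilon_1^a$ for an appropriate exponent $a$ makes both the death and the no-bond probabilities simultaneously polynomial in $\epsilon_1$, yielding the required lower bound on $P(G_n)$; the subsequent iteration of Condition \ref{ic0} in (iii) is then a routine application of the filter inequality.
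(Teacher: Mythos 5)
Your overall architecture matches the paper's: decompose $B_{L_1,T_1}(x)$ into $N=T_1/T_0$ independent time-slabs, bound each slab-crossing probability by $1-(c\epsilon_1)^p$, multiply, and compare the exponent with $L_1^q$ via (\ref{q00}). The per-slab argument is also close in spirit — build a decreasing blocking event, use FKG, and balance the death/no-bond window against the Poisson rates — but your blocking event is missing a third piece, and without it the argument has a genuine gap.

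Your blocking event has (i) deaths at all resonant vertices in a short window $J_n$, (ii) no bonds at resonant edges in $J_n$, and (iii) no bulk percolation from the bottom face $\Upsilon^\cV_{L_1}(x,(n-1)T_0)$ to the top face $\Upsilon^\cV_{L_1}(x,nT_0)$ through the non-resonant region. This does not yet prevent the following path: enter a resonant column at an early time, ride the (essentially death-free) column upward to a time just below $J_n$, exit into the bulk, and make a short final hop to the top face. Your (iii) does not exclude this, because it only concerns bulk paths originating on the bottom face; your (i)--(ii) do not exclude it either, because they only cut the columns inside $J_n$. The paper therefore carries a separate event, $F_\cR^{(1)}$ (Propositions \ref{prop:layer0} and \ref{prop:res0a}), forbidding communication from the full resonance lines $I_w^{(line)}=\cV(w)\times[0,T_0-\xi]$ to $\Upsilon^\cV_{L_1}(x,T_0)$ inside the non-resonant region $\bU_1$. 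Crucially, $P[F_\cR^{(1)}]$ is not overwhelmingly close to $1$: the resonance lines reach within $\xi$ of the target layer, so the bound obtained is only $P[F_\cR^{(1)}]\geq c^R$ for a constant $c$, and getting even this requires its own argument (partition the line into short intervals, bound each interval's escape probability via Lemma \ref{lem:liebfilter} and Proposition \ref{prop:bulk}, and take an FKG product over intervals). Once you add $F_\cR^{(1)}$ with this bound, your FKG lower bound becomes $P[G]\,P[F_\cR^{(1)}]\,P[F_\cR^{(2)}]\geq (1-o(1))\,c^R\,c^R\epsilon_1^{2R}$, matching the paper's Proposition \ref{prop:layer0}, and the remaining arithmetic in your final paragraph goes through.
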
 
        The notation $\Upsilon^\#_L(x,t)$ is defined in (\ref{eblock}). The proof of Proposition \ref{prop:Vbdry0} is contained in Sections \ref{sec:continuous1} and \ref{sec:layer1}.

        \begin{proof}[proof of Proposition \ref{P:ind0}]
         For this proof, let us write $ \bB = B_{L_1,T_1}(x,t)$.	First we consider $(y, \pm T_1) \in \partial^\cV B$,
         observe that
         \[ \{ (x,0) \lra (y,\pm T_1)|\bB \} \subset \{\Upsilon_{L_1}^{\cV}(x,0)  \lra \Upsilon_{L_1}^{\cV}(x,\pm T_1) |\bB \}  \]
          from Proposition \ref{prop:Vbdry0}, 
        	\begin{align} \label{eq:Vb0}
        	    \sum_{s = -T_1,T_1} \sum_{ y \in \Lambda_{L_1}(x)} P[(x,0) \lra (y,s) | B] \leq C_d L_1^d 
        	    e^{- L_1^{q} }.  
        	\end{align}
        	It is clear that
        	\[
        	\int_{ U \in \partial^\cE_{|W} B }
            \lambda_{U}  P[A \lra B \cap \Upsilon_{1/2} (U)| B]
            dU  
            \leq \frac{1}{\epsilon_1} C_d L_1^{d-1} T_1 \sup_{(y,s) \in \partial^{\cE-}}  
             P(  (x,0)  \lra (y,s)| B_{L_1,T_1}(x)).
        	\]
        	As $ q > 1$, for large enough $L_0$ this term dominates (\ref{eq:Vb0}), thus, using  Proposition \ref{prop:Hbdry0}
        	\[
        	  Q[(x,0), B ] 
        	  \leq
        	C_2 L_0^{(\alpha  + d - 1)\gamma + (\eta \gamma + \alpha) R }
       e^{ - \ol \mu_0 L_1}  
        	\] 
        	 Thus, for large enough $L_0$ we have the result.
        	  \qed
        \end{proof}

    \subsubsection{Percolation in the lattice directions}  \label{sec:lateral0}
          
       Throughout this section, we fix $x$ to be a site such that $\Lambda^\cV_{L_1}(x)$ is $\epsilon_1$-non-resonant.   We specify the $\epsilon_0$-resonant vertices and edges with the following notation,
        \[
         \cR_0^e = 
            \{ u \in \cE   \mid 
            \| u - x\| \leq L_1  ;
            \lambda_u  > \epsilon_0^{-1}  \}. 
        \]
        and
        \[ \cR_0^v = 
           \{  y \in \cV   
           \mid \|y - x\| \leq L_1;
            \delta_u < \epsilon_0  \}  
           \]
            From which we define the resonance set $ \cR_0 = \cR_0^e \cup \cR_0^v$.
            Let us denote the non resonant set as
            \[  \cW_1 
                = \Lambda_{L_1}(x) \setminus  \cV(\cR_g).    \]
           The non resonant region in percolation space is then
           \be \label{regular1}
             \bW_1 =   \cW_1 \times [-T_1,T_1]. 
           \ee
         
       \begin{proof}[proof of Proposition \ref{prop:Hbdry0}]  
          For this proof, again let $B = B_{L_1,T_1}(x) $.
          We will consider percolation paths that begin at $(x,0) $  and we filter the paths by their visits to segments of $\cR^g_0 \times [-T_1,T_1]$. 
          Let 
          \[\cA_r = \{(u_0,u_1,...,u_r):  u_0 = x  ;u_i \in \cR_0^e \tn{ or } u_i \in\cR_0^v \setminus \cV(  \cR_0^e)  \tn{ and } u_i \neq u_j \tn{ if } i\neq j   \}\] 
          be the collection of $r$-length non-repeating  sequences in $\cR_0$ with initial site $u_0 = x$. By the $L_1$ nice assumption, there are only $R_v$ sites and $R_e$ edges which are $\epsilon$-resonant so $\cA_r = \emptyset$ for $r >   R_e + R_v$.   
           For $\bA = (u_0,..,u_r)  \in \cA_r$ let us write 
           \[\bA_i = \cV (u_i) \times [- T_1,T_1]   \]
            where $\cV(\cdot)$ is defined in (\ref{verset}). For $(y,s)  \in \partial^{\cE-} B_{L_1,T_1}(x,0),$ we introduce the percolation event defined by the prescribed visits to the resonance lines, 
          \begin{align} \label{eq:VA} \Psi_\bA(y,s) :=\{ (x,0)\lra  \bA_1 | \bW_1 \} \circ \{  \bA_1 \lra \bA_2 | \bW_1 \} \circ \cdots \circ \{  \bA_r \lra (y,s)| \bW_1\} .\end{align}
    It is possible that $\bA_r$ is a boundary line of $B$, then if $(y,s) \in A_r$ the final event in (\ref{eq:VA}) holds trivially. Observe that the percolation to $(y,s)  \in \partial^{\cE-} B$ is contained in some percolation event $\Psi_\bA(y,s)$:
     \begin{align}
     \label{hpercUpaths}
     \{ (x,0)  \lra (y,s)| B\}   
      \subset 
      \cup_{r\geq 0}
      \cup_{\bA \in \cA_r} \Psi_\bA(y,s).     
     \end{align}
     Thus, to complete the proof, we need only control the events $\Psi_\bA$.     
          
     First let us observe that    edges incident to sets $A_i$ have crossing parmeter bounded by $\epsilon_0^{-1}$ so that,       
     \begin{align*}
         P(\{  \bA_i  \lra  \bA_{i+1}| \bW_1\} ) 
            &   \leq \kappa \epsilon_0^{-1}
              \int_{ U \in \partial^{\cE+} (\bA_i) } 
             P[ U \lra \bA_{i+1}  | \bW_1 ]  dU
          \\
          &  \leq \frac{4d\kappa T_1}{\epsilon_0}
             e^{- \mu_0 L_0
               \left \lfloor\frac{\dist(u_i,u_{i+1})  }{L_0} \right\rfloor} 
     \end{align*}
      Where we have used Proposition \ref{prop:rhosmall} on the second line.  Then  for any $\bA \subset \cA_0$,
          \[    \left \lfloor \frac{ \dist(u_1,u_0)}{L_0} \right\rfloor 
             + \left\lfloor  \frac{ \dist(u_2,u_1)}{L_0}\right \rfloor 
                + \cdots +  \left\lfloor  \frac{ \dist(u_r,u_{r-1})}{L_0}\right\rfloor 
                +  
               \left \lfloor \frac{ \dist(y,u_r) }{L_0}\right\rfloor 
                 >  \frac{L_1 - 4 r L_0}{L_0},
            \]
   Therefore, 
    we may uniformly bound any $\Psi_\bA$ event for $\bA \in \cA_r$,  
     \begin{align}
     \label{eq:lev0:H2}
       P( \Psi_\bA (y,s) ) \leq 
       \left (\frac{4d  T_1}{\epsilon_0} \right)^{ r }
       e^{ - \mu_0 \left(  L_1 - 4 r L_0  \right)}.
     \end{align}          
    Finally, observe there are at most $ { R \choose r} r! $ elements of $\cA_r$.
    Thus           
            \begin{align}
               \label{eq:lev0:H3}
    P( \{ (x,0) \lra (y,s) | B \} )
      <  e  R !  \left (\frac{4T_1}{\epsilon_0} \right)^{R}
       e^{ - \mu_0  \left[ 1 - 4  R  \frac{L_0}{L_1}  \right] L_1},     
     \end{align}
     as $T_1/\epsilon_0 = L_0^{\gamma \eta + \alpha}$,
     this is what we intended to prove. \qed
       \end{proof}

    \subsubsection{Percolation in the continuous direction}       \label{sec:continuous1}
       
       Percolating from $(x,0)$ to $\Upsilon_{L_1}(x,T_1)$ requires percolating through $\lfloor T_1/T_0 \rfloor $ intervals of length $T_0$. 
       Therefore we will prove the following:
        \begin{proposition}\label{prop:layer0}
        	Suppose $L_0$ is sufficiently large, the environment $(\delta,\lambda)$ is $L_1$ nice,  $\Lambda_{L_1}(x)$ is $\epsilon_1$ non-resonant, and Condition \ref{ic0} holds.  Then we have, for any $t $ so that $|t|< T_1 - T_0$,
        	\[  P[\Upsilon_{L_1}(x,t) \lra \Upsilon_{L_1}(x ,t+ T_0) | B_{L_1,\frac12 T_0}(x , t + \tfrac12 T_0) ]  
        	< 1 -  c^{R} \epsilon_1^{ 2 R }
        	\] 
        	were $c$ is a constant depending on $d$, $\mu_0$, and $C_\kappa$.
        \end{proposition}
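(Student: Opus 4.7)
The plan is to reduce the bound $P[\text{crossing}] < 1 - c^R \epsilon_1^{2R}$ to exhibiting an explicit ``blocking'' event $E$ with $P[E] \geq c^R \epsilon_1^{2R}$ on which no bottom-to-top crossing of the $T_0$-slab $\Lambda_{L_1}(x)\times [t,t+T_0]$ can occur. This gives $P[\text{crossing}] \leq 1 - P[E] + P[\text{crossing}\cap E]$, and the task splits into lower-bounding $P[E]$ and showing that $P[\text{crossing}\cap E]$ is negligible.

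By Proposition~\ref{res-sep}, the $\epsilon_0$-resonant set $\cR_0$ in $\Lambda_{L_1}(x)$ contains at most $R_v$ vertices and $R_e$ edges, and the $\epsilon_1$-nonresonance hypothesis gives $\delta_y \geq \epsilon_1$ for every vertex and $\lambda_u \leq \epsilon_1^{-1}$ for every edge in the layer. I would fix a short sub-interval $I \subset [t, t+T_0]$ (whose length is tuned so that both the ``death'' and the ``no-bond'' constraints below have useful probability) and define $E$ as the conjunction of: (i) for each $y \in \cR_0^v$, at least one death arrives on $y \times I$; and (ii) for each $u=(y_1,y_2)\in \cR_0^e$, a death arrives on each $y_i \times I$ and no bond arrives on $u\times I$. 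These constraints live on pairwise-disjoint Poisson processes and are therefore independent. Using $1 - e^{-\delta_y |I|/\kappa} \geq c \min(\delta_y |I|/\kappa,\,1)$ for the deaths and $e^{-\kappa \lambda_u |I|} \geq e^{-\kappa |I|/\epsilon_1}$ for the no-bond events, a suitable choice of $|I|$ gives a factor of at least $c \epsilon_1$ per resonant vertex and $c \epsilon_1^2$ per resonant edge, so $P[E] \geq c^{R_v + 2R_e} \epsilon_1^{R_v + 2R_e} \geq c^R \epsilon_1^{2R}$ since $R_v+2R_e\leq 2R$.

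The crux is showing that on $E$, no crossing exists. On $E$, every resonant vertex has its time-line broken inside $I$, and every resonant edge $u$ is doubly-cut at its endpoints and carries no bond in $I$, so the resonant structure transports no percolation across the $I$-slab. Consequently, any hypothetical crossing path must, during $I$, traverse only non-resonant vertices and non-resonant edges. Applying the filter Lemma~\ref{lem:liebfilter} (as in the proof of Proposition~\ref{prop:Hbdry0}) through the residual resonance-vertex pieces that survive in $I$, and combining with the $\epsilon_0$-nonresonant single-site bound (\ref{eq:rhosmall}) of Proposition~\ref{prop:rhosmall}, the probability of any such purely non-resonant transit inside $I$ is at most a polynomial in $L_0$ times $e^{-c\mu_0 L_0}$, much smaller than $\epsilon_1^{2R}$ for $L_0$ large. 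Combining:
\[ P[\text{crossing}] \leq P[E^c] + P[\text{crossing}\cap E] \leq 1 - c^R \epsilon_1^{2R} + o(\epsilon_1^{2R}). \]

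The main obstacle I anticipate is in this last step: the window $|I|$ is much shorter than the natural regularity scale $T_0 = L_0^\eta$ of Condition~\ref{ic0}, so that condition cannot be invoked directly inside $I$. Instead one has to carry out a filtration through the (few) resonance-vertex pieces that live within $I$, using the \emph{short-time} version of the bulk decay (\ref{eq:rhosmall}) together with the BK inequality, to confirm that the leakage $P[\text{crossing}\cap E]$ is super-polynomially small in $L_0$ and therefore cannot overwhelm the blocking-event payoff $c^R \epsilon_1^{2R}$.
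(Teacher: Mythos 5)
Your overall strategy is right in spirit — exhibit a ``blocking'' event with probability at least $c^R\epsilon_1^{2R}$ — and your event $E$ is essentially the paper's break event $F_\cR^{(2)}$ (the deaths on resonant vertex neighborhoods and no-bond requirements on a thin sub-interval $I$, here of length $\xi=\epsilon_1/\kappa$). Your probability estimate $P[E]\geq c^R\epsilon_1^{2R}$ matches the paper's Proposition~\ref{prop:res0b}. However, there is a genuine gap in the step where you claim that on $E$ ``any hypothetical crossing path must, during $I$, traverse only non-resonant vertices and non-resonant edges,'' and then try to control $P[\text{crossing}\cap E]$ by a purely non-resonant transit estimate.

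The claim is false: $E$ forces a single death on each resonant vertex line $y\times I$, but the segments of $y\times I$ below and above that death remain alive. A crossing can ride the resonant vertex ``highway'' on either side of the death, hop off through a non-resonant edge near the death time, detour briefly, and hop back on. None of this is forbidden by $E$, and such detours cost only $O(C_\kappa)$ per resonant vertex, not $e^{-c\mu_0 L_0}$. Consequently $P[\text{crossing}\cap E]$ is \emph{not} $o(\epsilon_1^{2R})$ — indeed $P[\text{crossing}\,|\,E]$ is bounded away from zero uniformly in $L_0$ — and your decomposition $P[\text{crossing}]\leq 1-P[E]+P[\text{crossing}\cap E]$ does not close. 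What is needed, and what the paper supplies, is a second decreasing event $F_\cR^{(1)}$: the assertion that the resonant vertex lines $I_w^{(line)}=\cV(w)\times[0,T_0-\xi]$ do \emph{not} communicate, through the non-resonant region $\bU_1$, with the top layer $\Upsilon_{L_1}^\cV(x,T_0)$. Together with $G$ (no crossing purely in $\bU_1$) and $F_\cR^{(2)}$, the intersection $G\cap F_\cR^{(1)}\cap F_\cR^{(2)}$ is \emph{contained in} the no-crossing event: the last exit of the path from a resonant line is either below $T_0-\xi$ (blocked by $F_\cR^{(1)}$) or in the top boundary layer (blocked by $F_\cR^{(2)}$). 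All three events are decreasing, so FKG gives $P[\text{no crossing}]\geq P[G]\,P[F_\cR^{(1)}]\,P[F_\cR^{(2)}]$ with $P[F_\cR^{(1)}]\geq c^R$ (Proposition~\ref{prop:res0a}) and $P[G]\to 1$ (Proposition~\ref{prop:res0c}). Your proposal is missing the $F_\cR^{(1)}$ ingredient, and without it the leakage term cannot be made negligible.
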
 
        The proof is contained in Section \ref{sec:layer1}.
        \begin{proof}[proof of Proposition \ref{prop:Vbdry0}]
        	To percolate from the layer at $t = 0$ to the layer at $t = T_1$ requires percolating through all layers $\Upsilon_{L_1}(x, k T_0)$ for $k = 1,2,.., T_1/T_0$.
        	Thus,  we have
        	\begin{align}\label{eq:vertp1}
        	  \{\Upsilon_{L_1}(x,0) \lra \Upsilon_{L_1}(x,T_1)|  B_{L_1,T_1}(x) \} 
        	 \subset
        	  \cap_{k=1}^{\lfloor T_1 /T_0 \rfloor} 
        	\{\Upsilon_{L_1}(x,(k-1) T_0) \lra \Upsilon_{L_1}(x,k T_0)   | B_{L_1,T_1}(x, (k  - \tfrac12)T_0 )\} . 
        	\end{align}
        	The events on the right hand side are mutually independent, therefore we apply Proposition \ref{prop:layer0} to each,
        	\begin{align} \label{eq:vertp2}
        	  P[ \{\Upsilon_{L_1}(x,0) \lra \Upsilon_{L_1}(x,T_1)  | B_{L_1,T_1}(x)\} ] 
        	 & \leq 
        	\left[ 1 - c^{R}\epsilon_1^{2 R}\right]^{T_1 / T_0 - 1}
           \leq
        	e^{- (T_1 / T_0 - 1)c^{R}\epsilon_1^{2R}}.
        	\end{align}
        	By the chosen parameters we have
        	\[  P[ \{\Upsilon_{L_1}(x,0) \lra \Upsilon_{L_1}(x,T_1)  | B_{L_1,T_1}(x)\} ]  
        	\leq
        	\exp\left\{- \frac12  c^{R+1}
        		L_1^{ \eta - \eta/\gamma -2 \alpha  R }  \right\},
        	\] 
        	for the chosen $q$ and large enough $L_0$ this completes the proof. 
        	\qed
        \end{proof}

          \subsubsection{Vertical percolation by layer}\label{sec:layer1} 
         
          We break the events of vertical percolation by layer (from $\Upsilon_{L_1}(x,kT_0)$ to $\Upsilon_{L_1}(x,(k+1)T_0)$) into two subevents. For simplicity we will carry out the proof in the $k = 0$ case. The first is percolation from $\Upsilon_{L_1}(x,0)$ to $\Upsilon_{L_1}(x,T_0)$ through the regular environment $\bW_1$, as defined in (\ref{regular1}). The second is percolation from the resonant lines $\cR_0\times [0,T_0]$ to $\Upsilon_{L_1}(x,T_0)$.     The resonant sets   are incident to $\Upsilon_{L_1}(x,T_0)$, thus, we control communication between these sets by requiring each resonant line has a death in the neighborhood of $\cR_0\times T_0$. 
          
         Now let us formalize these definitions. The thickness of the boundary layer will be different for the resonant edges and sites. Let 
         \be \label{xidef}  \xi  = \frac{\epsilon_1}{\kappa} \ee
           Let $I^{(line)} = [0,T_0 - \xi] $ and $I^{(bdry)} = [T_0 - \xi ,T_0].$ For $w \in \cR_0$, let 
         \[I_w^{(\#)} =   \cV(w) \times I^{(\#)},   \]
          for $\# = line,\ bdry$.
           For $ w \in \cR_0$, (either an edge or a site), we define an absence of  communication event from the line at $w$ as
        \[  F_{w}^{(1)} = \{ I_w^{(line)} \lra \Upsilon^\cV_{L_1}(x,T_0)\mid \bU_1 \}^c \]
        where 
        \[ \bU_1 = \bW_1 \cap  B_{L_1,\frac12 T_0}(x , t + \tfrac12 T_0).  \]
        For $w\in \cR_0$, define the  
         `break' event at the boundary as
        \[ 
          F_{w}^{(2)} = \left( \cap_{y \in \cV(w) } \{ (y\times I^{bdry} )  \cap \bbD \neq \emptyset \} \right)
           \cap
      \left( \cap_{u \in \cE(\cV( w))} \{( u \times I^{bdry} ) \cap \bbB  = \emptyset \} \right).  \] 
       The events of non-communication from the resonance line to the upper boundary, and break event near the upper boundary, are now
        \be
        \label{F12}
        F_\cR^{(1)} =   \cap_{u \in \cR_0 } F_{u}^{(1)}  \tn{ and }    F_{\cR}^{(2)} =   \cap_{u \in \cR_0 } F_{u}^{(2)} .
        \ee
      Now we may write the complement of percolation through the resonance set as,
        \[ F_{\cR} = F_\cR^{(1)} \cap F_\cR^{(2)}. \] 
      Absence of  percolation in the nonresonant region is written as   
           \[ G = \{ \Upsilon^\cV_{L_1}(x,0) \lra \Upsilon^\cV_{L_1}(x ,T_0) \mid \bU_1 \}^c. \]
          Note that $F_\cR $ and $G$ are both decreasing events and 
          $\{\Upsilon^\cV_{L_1}(x,0)  \lra  \Upsilon^\cV_{L_1}(x,T_0)|  \bU_1 \} \subset (F_\cR \cap G)^c $.

       \begin{proposition}\label{prop:res0a}
           Suppose  Condition \ref{ic0} holds and  $\Lambda_{L_1}(x)$ is $\epsilon_1$ non-resonant and Condition \ref{ic0} holds. Then  
           \[ P\left[ F_{ \cR}^{(1)}  \right] 
             \geq   c^R
              .  \]
             for $c$ depending on $d$, the choice of  $\mu_0$ and $C_\kappa$.
       \end{proposition}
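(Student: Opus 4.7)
The plan is to factor $F_\cR^{(1)}=\bigcap_{w\in\cR_0}F_w^{(1)}$ via FKG and then bound each per-resonance probability by combining the Lieb filter (Lemma~\ref{lem:liebfilter}) with the $(\mu_0,L_0)$-regularity of bulk sites provided by Condition~\ref{ic0}.

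Each $F_w^{(1)}$ is a decreasing event, since the absence of a percolation path is preserved under removing bonds or adding deaths. Hence by the FKG inequality~(\ref{FK}),
\[
P[F_\cR^{(1)}] \;\geq\; \prod_{w\in\cR_0} P[F_w^{(1)}],
\]
and because $|\cR_0|\leq R$ it suffices to exhibit $c=c(d,\mu_0,C_\kappa)>0$ with $P[F_w^{(1)}]\geq c$ for each single resonance $w$.

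Fix $w\in\cR_0$. Any realization in $(F_w^{(1)})^c$ contains a path that first exits $I_w^{(line)}$ via a bond on an edge $e=\{w',y\}$ with $w'\in\cV(w)$ and $y\in\cW_1$. Such an edge must be non-resonant: if $e\in\cR_0^e$ then $y\in\cV(\cR_0^e)\subset\cV(\cR_0)$, contradicting $y\in\cW_1=\Lambda_{L_1}(x)\setminus\cV(\cR_0)$. Consequently $\kappa\lambda_e\leq C_\kappa$. Filtering by this first exit via Lemma~\ref{lem:liebfilter} yields
\[
P[(F_w^{(1)})^c] \;\leq\; 2d\,|\cV(w)|\, C_\kappa \int_0^{T_0-\xi}\, \sup_{y}\, P\big[(y,t)\lra\Upsilon^\cV_{L_1}(x,T_0)\mid\bU_1\big]\,dt,
\]
with the supremum over $y\in\cW_1$ adjacent to $\cV(w)$. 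For each such $y$, the box $B_{L_0,T_0}(y,t)\cap\bU_1$ is $\epsilon_0$-non-resonant by the definition of $\cW_1$, so $(y,t)$ is $(\mu_0,L_0\mid\bU_1)$-regular by Condition~\ref{ic0}. An iteration of the Lieb filter directed toward the target layer bounds the inner probability by $e^{-\mu_0 L_0}$, producing $P[(F_w^{(1)})^c]\leq C\,C_\kappa L_0^\eta e^{-\mu_0 L_0}\leq 1/2$ for $L_0$ sufficiently large in terms of $d,\mu_0,C_\kappa$. The FKG product then yields $P[F_\cR^{(1)}]\geq 2^{-R}$, proving the statement with $c=1/2$.

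The delicate point is the uniform bound on $P[(y,t)\lra\Upsilon^\cV_{L_1}(x,T_0)\mid\bU_1]$ for $t$ close to $T_0-\xi$: the remaining temporal distance $T_0-t$ is then only $\xi\ll T_0=L_0^\eta$, so the target sits inside the canonical regularity box $B_{L_0,T_0}(y,t)$ and $Q[(y,t);B_{L_0,T_0}(y,t)\mid\bU_1]$ does not directly control percolation to it. This is handled by switching the direction of iteration: one filters from a site $z$ on the target layer, whose own $(\mu_0,L_0\mid\bU_1)$-regularity places $(y,t)$ in the lower half of $B_{L_0,T_0}(z,T_0)$, so that the $e^{-\mu_0 L_0}$ decay applies to paths exiting downward toward $(y,t)$ and the desired estimate holds uniformly in $t\in[0,T_0-\xi]$.
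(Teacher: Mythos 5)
Your outer architecture matches the paper's in spirit: filter the first exit from the resonance lines by a non-resonant bond, picking up the small rate $\kappa\lambda_e\leq C_\kappa$, and use FKG to combine. However, the central estimate is wrong. You claim $\sup_y P[(y,t)\lra\Upsilon^\cV_{L_1}(x,T_0)\mid\bU_1]\leq e^{-\mu_0 L_0}$ uniformly over $t\in[0,T_0-\xi]$, but the target layer $\Upsilon^\cV_{L_1}(x,T_0)$ lives at spatial distance $0$ from $y$ and temporal distance $T_0-t$, which for $t$ near $T_0-\xi$ is only $\approx\xi=\epsilon_1/\kappa\to 0$. There is no obstruction forcing the path to travel a spatial distance of order $L_0$, so the probability can be near $1$, not near $e^{-\mu_0 L_0}$. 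Your attempted repair does not close this gap: the $(\mu_0,L_0\mid\bU_1)$-regularity of a site $z$ on the target layer is a bound on $Q[(z,T_0);B_{L_0,T_0}(z,T_0)\mid\bU_1]$, i.e.\ on percolation from $(z,T_0)$ to the \emph{boundary} of the box $B_{L_0,T_0}(z,T_0)$. A point $(y,t)$ with $\|y-z\|$ small and $|T_0-t|<T_0$ sits in the \emph{interior} of that box, so regularity says nothing about paths between $(z,T_0)$ and $(y,t)$, in either direction.

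The available ingredient is not spatial decay at scale $L_0$ but temporal decay at unit scale from Proposition~\ref{prop:bulk}, which gives roughly $e^{-2\mu_0\lfloor T_0-t\rfloor}$, i.e.\ essentially no decay for $t$ near $T_0-\xi$ but an integrable tail. Plugging that into your integral gives $\int_0^{T_0-\xi}e^{-2\mu_0(T_0-t)}\,dt=O(1/\mu_0)$, so the correct conclusion is $P[(F_w^{(1)})^c]=O(C_\kappa/\mu_0)$, not $O(C_\kappa L_0^\eta e^{-\mu_0 L_0})$. This is still $\leq 1/2$ once $C_\kappa$ is chosen small relative to $\mu_0$, so your overall plan ($P[F_w^{(1)}]\geq 1/2$, FKG gives $c^R$ with $c=1/2$) can be rescued, but the mechanism must be the temporal decay, not the claimed $e^{-\mu_0 L_0}$. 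The paper implements precisely this via a partition of $[0,T_0-\xi]$ into intervals $J^{(\ell)}$ of width $a$, bounding each $P[J^{(\ell)}_\cR\lra\Upsilon^\cV_{L_1}(x,T_0)\mid\bU_1]\lesssim C_\kappa a|\cR_0|\,e^{-2\mu_0(a\ell+\xi)}$ (note the exponent tends to $0$, not $\mu_0 L_0$), then applying FKG over $\ell$ and summing the resulting logarithms to obtain $P[F_\cR^{(1)}]\geq\exp\{-C_\kappa\tfrac{8d}{\mu_0}|\cR_0|\}$.
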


       \begin{proof} Let $a > 0$, to be sufficiently small as determined later, and for all $ \ell \geq 0$ let us partition $I^{(line)}$ in to the intervals $ J^{(\ell)} =  T_0 - \xi - a \ell  +  [-a , 0 ]$. Then for any $u \in \cR_0$
     \[ 
          J_\cR^{(\ell)} = \cV( \cR) \times  J^{(\ell)} .
     \] 
     The probability of communication from such an interval   
     to $\Upsilon_{L_1}(x, T_0)$ is bounded by
       \begin{equation} \label{line2UPS}
       P(\{ J_\cR^{(\ell)} \lra \Upsilon_{L_1}^\cV (x,T_0) | \bU_1  \})
        < Q[J_\cR^{(\ell)};J_\cR^{(\ell)}| \bU_1]  
         \sup_{(y,s) \in \partial_{|\bW_1}^{\cE+} J_\cR^{(\ell)}} 
         P(\{(y,s) \lra \Upsilon_{L_1}^\cV (x,T_0)| \bU_1 \}). 
\end{equation}     
  By definition of the sets $J_u^{(\ell)}$, the incident  edges are not $\epsilon_0$ resonant. Thus 
       \[ Q[J_\cR^{(\ell)}; J_\cR^{(\ell)}| \bU_1 ] \leq \kappa \epsilon_0^{-1} 4d a |\cR_0| = C_\kappa4d a |\cR_0| \]
       where the right equality follows from definition of $\kappa$ (\ref{kappa}) 
       Therefore, using Lemma \ref{lem:liebfilter} and Proposition \ref{prop:bulk}, 
       \be
       \label{KAlra}
         P(\{ J_\cR^{(\ell)} \lra \Upsilon_{L_1}^\cV (x,L_1) | \bU_1  \})  \leq 
         C_\kappa 4d  a e^{-2\mu_0 (a\ell  + \xi )} |\cR_0|
       \ee  
       The communication probability from $I_u^{(line)}$ to $\Upsilon^\cV_{L_1}(x,  T_0)  $ is the union of the above events.
       Consider the complement of the communication events and use the FKG inequality,
       \[  P \left[  F_{u}^{(1)} \right] = P [  \cap_{\ell} \{J_\cR^{(\ell)}  \lra  \Upsilon_{L_1}^\cV(x,T_0) | \bU_1 \}^c ] \geq \prod_{\ell} P[ \{ J_\cR^{(\ell)} \lra  \Upsilon_{L_1}^\cV(x, T_0 ) | \bU_1 \}^c ]   \]
       Now apply (\ref{KAlra}) to obtain, 
        \[  P\left[  F_{\cR}^{(1)}  \right]\geq  
               \prod_{\ell \geq 0}  \left[ 1 - 
               C_\kappa 4da |\cR_0| e^{-2\mu_0(a \ell +\xi)}\right]    \]
         Thus, for small $a$, 
         \begin{align*}
        -\log    P\left[  F_{\cR}^{(1)}  \right]
        & \leq  
                  -   \sum_{\ell \geq 0} \log \left[ 1 - C_\kappa  4da |\cR_0|  e^{- 2 \mu_0 (a\ell +\xi) } \right] 
                  \\
        & \leq 
            \sum_{k \geq 0}    C_\kappa 8da \epsilon_0  |\cR_0| e^{- 2 \mu_0 (al+\xi)  } 
              =   C_\kappa  8d |\cR_0| e^{- 2\mu_0 \xi}  \frac{a}{1 - e^{- 2 \mu_0 a}} 
         \end{align*}
         therefore, taking $a$ sufficiently small, we have
         \[   P\left[  F_{u}^{(1)}  \right] \geq \exp\left\{ - C_\kappa\frac{ 8 d  }{\mu_0 }e^{  - 2\mu_0  \xi }  \right\} \geq   \exp\left\{ - C_\kappa\frac{ 8 d  }{\mu_0 } |\cR_0|  \right\}.\] 
         As $|\cR_0| \leq R$, this completes the proof. \qed  
         \end{proof}

       \begin{proposition}\label{prop:res0b}
           Suppose  Condition \ref{ic0} holds and  $\Lambda_{L_1}(x)$ is $\epsilon_1$ non-resonant and Condition \ref{ic0} holds. Then for $u \in \cR_0$,
         \[
         P\left[  F_{\cR_0}^{(2)} \right]  \geq 
          c^R \epsilon_1^{2 R}
         \]
       \end{proposition}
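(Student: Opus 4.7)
The plan is to view $F_{\cR_0}^{(2)}$ as a conjunction of elementary Poisson-arrival events on the short time interval $I^{(bdry)}$ of length $\xi = \epsilon_1/\kappa$. Specifically, it demands a death at each vertex $y \in \cV(\cR_0)$ and no bond at each edge $u \in \cE(\cV(\cR_0))$. Since these conditions concern independent Poisson processes at distinct loci, the probability factors as
\[
P[F_{\cR_0}^{(2)}] = \prod_{y \in \cV(\cR_0)}\bigl(1 - e^{-\delta_y\xi/\kappa}\bigr)\prod_{u \in \cE(\cV(\cR_0))} e^{-\kappa\lambda_u\xi},
\]
and the required combinatorial counts $|\cV(\cR_0)| \leq R_v + 2R_e \leq 2R$ and $|\cE(\cV(\cR_0))| \leq 2d\,|\cV(\cR_0)| \leq 4dR$ follow from the definition of the resonant set.

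The no-bond product is controlled immediately by the $\epsilon_1$-non-resonance of $\Lambda_{L_1}(x)$: each $\lambda_u \leq \epsilon_1^{-1}$ forces $\kappa\lambda_u\xi = \lambda_u\epsilon_1 \leq 1$, so each factor is at least $e^{-1}$ and the entire product is $\geq e^{-4dR}$.

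The heart of the argument is the death product, and this is where the main obstacle lies. A naive uniform lower bound $\delta_y \geq \epsilon_1$ gives only $P[\text{death at }y] \geq \epsilon_1^2/(2\kappa^2)$ per vertex, and raised to the $2R$ power this is $\epsilon_1^{4R}/(4\kappa^4)^R$, short of the $\epsilon_1^{2R}$ target by the factor $(\epsilon_1/\kappa^2)^{2R}$, a quantity that vanishes in $L_0$ whenever $\gamma > 2$. The resolution is to split $\cV(\cR_0) = V_1 \cup V_2$ into the truly resonant vertices $V_1 = \cV(\cR_0) \cap \cR_0^v$ (at most $R_v$ in number) and the remainder $V_2 = \cV(\cR_0)\setminus\cR_0^v$ (at most $2R_e$); by definition each $y \in V_2$ is \emph{not} $\epsilon_0$-resonant, so $\delta_y \geq \epsilon_0$ and the corresponding death factor improves to $\epsilon_0\epsilon_1/(2\kappa^2)$. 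Substituting $\epsilon_1 = \epsilon_0^{\gamma}$ and $\kappa = C_\kappa\epsilon_0$ reduces the death product to $\epsilon_0^{(\gamma-1)(2|V_1|+|V_2|)}/(2C_\kappa^2)^{|V_1|+|V_2|}$, and the inequality $2|V_1|+|V_2| \leq 2(R_v+R_e) = 2R$ saturates exactly the target exponent: factoring out $\epsilon_1^{2R} = \epsilon_0^{2\gamma R}$ leaves a residual $\epsilon_0^{-2R} = L_0^{2\alpha R}$, which for $L_0$ sufficiently large absorbs the bond contribution $e^{-4dR}$ together with all $C_\kappa$- and $d$-dependent constants, delivering $P[F_{\cR_0}^{(2)}] \geq c^R\epsilon_1^{2R}$ as claimed.
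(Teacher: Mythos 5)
Your proposal is correct and follows essentially the same route as the paper: the key step in both is to split the vertices of $\cV(\cR_0)$ into the $R_v$ truly resonant ones (where only $\delta_y \geq \epsilon_1$ is available, giving a death factor of order $\epsilon_1^2/\kappa^2$) and the at-most-$2R_e$ remaining endpoints of resonant edges (where $\delta_y \geq \epsilon_0$ upgrades the factor to $\epsilon_0\epsilon_1/\kappa^2$), then combine with $\kappa = C_\kappa\epsilon_0$ to land exactly on $\epsilon_1^{2R}$. Your one departure — writing $F_{\cR_0}^{(2)}$ directly as an intersection of elementary Poisson events at distinct loci so that the probability genuinely factors, rather than first bounding $P(\cap_w F_w^{(2)}) \geq \prod_w P(F_w^{(2)})$ via FKG and tolerating the double-counting of shared neighboring vertices and edges — is a small simplification but not a different strategy, and the resulting constant depends on $d$ and $C_\kappa$ exactly as stated.
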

       
         \begin{proof}
         By definition, there are at most $ R_v$ sites and $R_e$ edges in $\cR_0$.
         Thus we need at most $R_v$ many $\epsilon_1$-non-resonant intervals and $ R_e $ many $\epsilon_1$-non resonant intervals at edges. Considering the neighboring  edges of the resonance set we have at most  $2d R_v + 4d R_e$ many $\epsilon_0$-non-resonant edges, the number of neighboring sites consist  of at most  $2R_e$ many $\epsilon_0$-non-resonant sites.
         Thus we have, for some $0 < c < 1$,
         \begin{align}
          P[F^{(2)}_{\cR}] \geq \prod_{w \in \cR_0} P[F_{w}^{(2)}] & \geq 
          \left( 1- e^{ - \epsilon_1 \xi /\kappa} \right)^{R_v}
           \left( 1- e^{ - \epsilon_0 \xi /\kappa} \right)^{2R_e}
           \exp\{ -(2d R_v + 4d R_e) \xi \kappa \epsilon_0 - R_e \xi \kappa \epsilon_1^{-1}  \} \\
           & \geq \nonumber
        c^R  \left( \frac12 \frac{\epsilon_1^2}{\kappa^2}\right)^{R_v}
           \left( \frac12 \frac{\epsilon_1 \epsilon_0}{\kappa^2}\right)^{ 2 R_e}
          \end{align}
         where the second line follows from the choice of $\xi$ (\ref{xidef}). The proof now follows immediately by the definition of $\kappa$ (\ref{kappa}).
         \qed
               
       \end{proof}

       \begin{proposition}\label{prop:res0c}
           Suppose  Condition \ref{ic0} holds and  $\Lambda_{L_1}(x)$ is $\epsilon_1$ non-resonant and Condition \ref{ic0} holds. Then for $A \in \cI_0$ 
           \[ P\left[ G  \right] 
             \geq  1 - C L_1^d e^{- \mu_0 L_0}  .   \]
       \end{proposition}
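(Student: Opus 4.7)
The plan is to prove this by the obvious union bound combined with the bulk regularity statement of Proposition \ref{prop:bulk}. The complement event $G^c$ says there exists percolation from $\Upsilon^\cV_{L_1}(x,0)$ to $\Upsilon^\cV_{L_1}(x,T_0)$ restricted to $\bU_1 = \bW_1\cap B_{L_1,\frac12 T_0}(x, \tfrac12 T_0)$, so it suffices to show $P[G^c] \leq CL_1^d e^{-\mu_0 L_0}$.

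The key observation is that $\bU_1 \subset \bW_1 = \cW_1 \times [-T_1,T_1]$ where, by construction (\ref{regular1}), $\cW_1 = \Lambda_{L_1}(x) \setminus \cV(\cR_0)$ has been scrubbed of every $\epsilon_0$-resonant vertex and every vertex incident to an $\epsilon_0$-resonant edge. Hence $\bU_1$ is $\epsilon_0$-non-resonant as a subset of $\bL_\cG$, and Condition \ref{ic0} applies to every point of $\bU_1$. In particular, the hypotheses of Proposition \ref{prop:bulk} are met on $\bU_1$, so for each starting point $(y,0) \in \Upsilon^\cV_{L_1}(x,0)\cap \bU_1$,
\[
P\bigl[(y,0) \lra \Upsilon^\cV_{L_1}(x,T_0) \bigm| \bU_1\bigr]
\;\leq\; \exp\!\left\{-2\mu_0 \inf_{(y',T_0) \in \Upsilon^\cV_{L_1}(x,T_0)} \max\{\lfloor T_0 \rfloor,\|y - y'\|\}\right\}
\;\leq\; e^{-2\mu_0 T_0},
\]
since the vertical separation is exactly $T_0$.

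Next I take a union bound over starting points. Any configuration in $G^c$ has a percolation path whose base point lies in $\Upsilon^\cV_{L_1}(x,0) \cap \bU_1 \subset \cW_1 \times \{0\}$, and $|\cW_1| \leq |\Lambda_{L_1}(x)| \leq C_d L_1^d$. Therefore,
\[
P[G^c] \;\leq\; \sum_{y \in \cW_1} P\bigl[(y,0) \lra \Upsilon^\cV_{L_1}(x,T_0) \bigm| \bU_1\bigr]
\;\leq\; C_d L_1^d\, e^{-2\mu_0 T_0}.
\]

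Finally, since $T_0 = L_0^\eta$ with $\eta > 1$ (see (\ref{Tscales}) and (\ref{eta00})), we have $e^{-2\mu_0 T_0} \leq e^{-\mu_0 L_0}$ for $L_0$ sufficiently large, so $P[G] \geq 1 - CL_1^d e^{-\mu_0 L_0}$ as claimed. There is no real obstacle here — the statement is essentially a bookkeeping corollary of Proposition \ref{prop:bulk} once one observes that the excision of $\cV(\cR_0)$ in the definition of $\cW_1$ makes $\bU_1$ $\epsilon_0$-non-resonant; the only mild subtlety is ensuring that the definition of $\bW_1$ indeed removes not just resonant vertices but also vertices incident to resonant edges, so that Proposition \ref{prop:bulk} applies uniformly on $\bU_1$.
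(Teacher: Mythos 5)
Your proposal is correct and takes essentially the same route as the paper: apply Proposition~\ref{prop:bulk} to each $y\in\Lambda_{L_1}(x)$ (the paper just directly records the weaker bound $e^{-\mu_0 L_0}$ rather than passing through $e^{-2\mu_0 T_0}$), then union bound and take the complement. Your added observation that the excision $\cW_1 = \Lambda_{L_1}(x)\setminus\cV(\cR_0)$ is precisely what makes $\bU_1$ $\epsilon_0$-non-resonant --- because $\cV(\cR_0)$ removes both resonant vertices and the endpoints of resonant edges, so $\cE(\cW_1)$ cannot contain a resonant edge --- is a correct and useful elaboration of a step the paper leaves implicit.
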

       \begin{proof}
       For each $y \in \Lambda_{L_1 }(x)$ apply Proposition \ref{eq:bulk}. 
       \[ P\left( (y,0) \lra  \Upsilon^\cV_{L_1}(x,T_0) \mid \bU_1 \right) 
         \leq e^{-\mu_0 L_0}.   \]
        The proof is completed by summing over all $y$ and taking the complement. \qed
       \end{proof}
       
       \begin{proof}[proof of  Proposition \ref{prop:layer0}]
          We will use the FKG inequality to bound the complement below
          \begin{align} \label{layerfkg}
               P[\{\Upsilon^\cV_{L_1}(x,0) \lra \Upsilon^\cV_{L_1}(x , T_0) | B_{L_1,\frac12 T_0}(x,\tfrac12 T_0)\}^c]  \geq  
               P[G ]   P[ F_\cR^{(1)} ] P[F_\cR^{(2)}].
              \end{align} 
       To bound the product, combine Propositions  \ref{prop:res0a}, \ref{prop:res0b} and \ref{prop:res0c}   which completes the proof.
         \qed
       \end{proof}

   \subsection{Percolation and low dimensional defects at higher levels.} \label{sec:levk}
   
   The arguments at the $k$ to $(k+1)^{th}$ level are essentially similar to the 0 to $1^{st}$ level argument. The main complication arises in the general versions of Propositions   \ref{prop:res0a} and \ref{prop:res0b}, the  percolation between horizontal layers. We will recapitulate the method in this section, but for similar proofs we only write down the necessary modifications.

      The following is the general version of Proposition \ref{prop:Hbdry0}.
      \begin{proposition}\label{prop:Hbdryk}
        Suppose the environment process $\{(\bbT^{\nu_i}, \bT_i, h_i)\}_{i = 0}^d $ is $(\zeta,\sigma)$-proper. Let $L_0 $ be sufficiently large and suppose  Condition \ref{ick} holds. Then for $(y,s)\in \partial_\bL^{\cE-} B_{L_{k+1},T_{k+1}}(x)$
      	\[
      	\bbP( \{ (x,0)  \lra (y,s)| B_{L_{k+1},T_{k+1}}(x)\} )
      	<   C L_{k+1}^{(\theta \gamma + \alpha)(R +1)}
      	e^{ - \ol \mu_k L_{k+1}}     
      	\]    
      	for $\ol \mu_k = \mu_k 
      	\left[1 - (R+1) \frac{L_k}{L_{k+1}}\right]$.
      \end{proposition}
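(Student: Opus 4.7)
The plan is to mirror the proof of Proposition~\ref{prop:Hbdry0} nearly verbatim, with the first-level scale $(L_0,T_0,\epsilon_0)$ upgraded to $(L_k,T_k,\epsilon_k)$ and with the base-scale regularity supplied by Proposition~\ref{prop:bulk} replaced by the induction hypothesis Condition~\ref{ick}. The core of the argument is a BK filtering of percolation paths from $(x,0)$ by their ordered visits to the finitely many $\epsilon_k$-resonances in $\Lambda_{L_{k+1}}(x)$.

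First I would introduce the resonant set $\cR_k = \cR_k^e\cup\cR_k^v$ inside $\Lambda_{L_{k+1}}(x)$, defined exactly as in Section~\ref{sec:lateral0} but at threshold $\epsilon_k$. Since $\epsilon_k = L_k^{-\alpha} = L_{k+1}^{-\alpha/\gamma}$, Proposition~\ref{res-sep} applied at scale $L_{k+1}$ gives the a priori counts $|\cR_k^e|\le R_e$, $|\cR_k^v|\le R_v$, hence $|\cR_k|\le R$. I would then set $\cW_{k+1} = \Lambda_{L_{k+1}}(x)\setminus \cV(\cR_k)$ and $\bW_{k+1} = \cW_{k+1}\times[-T_{k+1},T_{k+1}]$, in analogy with (\ref{regular1}). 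By construction every $L_k$-subbox of $\bW_{k+1}$ is $\epsilon_k$-non-resonant, so Condition~\ref{ick} applied with $\bW = \bW_{k+1}$ furnishes $(\mu_k,L_k\mid\bW_{k+1})$-regularity throughout $\bW_{k+1}$.

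Next, in direct analogy with (\ref{eq:VA})--(\ref{hpercUpaths}), I would introduce the $r$-length non-repeating sequences $\cA_r$ of resonance visits starting at $u_0 = x$, and define the BK-products
\[
\Psi_\bA(y,s) := \{(x,0)\lra \bA_1\mid\bW_{k+1}\}\circ\cdots\circ\{\bA_r\lra (y,s)\mid\bW_{k+1}\},
\]
where $\bA_i = \cV(u_i)\times[-T_{k+1},T_{k+1}]$, so that $\{(x,0)\lra(y,s)\mid B_{L_{k+1},T_{k+1}}\}\subset\bigcup_{r\le R}\bigcup_{\bA\in\cA_r}\Psi_\bA(y,s)$ by local finiteness. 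To bound each individual leg $\{\bA_i\lra\bA_{i+1}\mid\bW_{k+1}\}$ I would first exit $\bA_i$ through an incident edge into $\bW_{k+1}$, whose crossing rate is at most $\epsilon_k^{-1}$ and whose boundary area is $O(T_{k+1})$, and then iterate the regularity from Condition~\ref{ick} across $\bW_{k+1}$ exactly as Proposition~\ref{prop:rhosmall} iterates Proposition~\ref{prop:bulk}. This delivers the leg estimate
\[
P(\{\bA_i\lra\bA_{i+1}\mid\bW_{k+1}\}) \le \tfrac{C_d\kappa T_{k+1}}{\epsilon_k}\exp\Bigl(-\mu_k L_k\bigl\lfloor\dist(u_i,u_{i+1})/L_k\bigr\rfloor\Bigr),
\]
and similarly for the terminal leg to $(y,s)$.

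Finally I would apply the BK inequality to each $\Psi_\bA$, telescope the $r+1$ floor terms using the triangle-type bound $\sum_i\dist(u_i,u_{i+1})+\dist(u_r,y)\ge L_{k+1}$ and $r\le R$ to absorb at most $(R+1)L_k$ of floor loss, then multiply by $|\cA_r|\le R!/(R-r)!$ and sum over $r\le R$. Recognizing $T_{k+1}/\epsilon_k = L_k^{\gamma\eta+\alpha}$ produces the stated prefactor and the exponent $-\ol\mu_k L_{k+1}$ with $\ol\mu_k = \mu_k[1-(R+1)L_k/L_{k+1}]$. The main obstacle I expect is purely bookkeeping rather than conceptual: the induction hypothesis only provides regularity at scale $L_k$, yet I need the accumulated decay to cover the whole $L_{k+1}$-scale box, and this is tight precisely because Proposition~\ref{res-sep} certifies that only $O(R)$ resonant lines obstruct the bulk, so that the correction $(R+1)L_k/L_{k+1} = (R+1)L_k^{1-\gamma}$ is negligible for $L_0$ sufficiently large.
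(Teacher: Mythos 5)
Your proposal is correct and tracks the paper's own argument essentially step for step: the paper's proof is a terse sketch that points back to Section~\ref{sec:lateral0}, defining $\cA_r$ and the events $V_\bA$ (called $\Psi_\bA$ in the first-level argument) in the same way, invoking Condition~\ref{ick} in place of Proposition~\ref{prop:bulk} to furnish $(\mu_k,L_k|\bW_{k+1})$-regularity in the non-resonant region $\bW_{k+1}=\cW_{k+1}\times[-T_{k+1},T_{k+1}]$, and obtaining the per-sequence bound $\left(\tfrac{CRT_{k+1}}{\epsilon_k}\right)^{R+1}e^{-\mu_k[L_{k+1}-(R+1)L_k]}$ before summing over the at most ${R\choose r}r!$ sequences in $\cA_r$. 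Your filled-in leg estimate, the floor-loss absorption yielding $\ol\mu_k=\mu_k[1-(R+1)L_k/L_{k+1}]$, and the identification $T_{k+1}/\epsilon_k = L_k^{\gamma\eta+\alpha}$ are exactly what the paper's cross-reference implies; no genuine gap.
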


    The following is the general version of Proposition \ref{prop:Vbdry0}.
 \begin{proposition} \label{prop:Vbdryk}
	  Suppose the environment process $\{(\bbT^{\nu_i}, \bT_i, h_i)\}_{i = 0}^d $ is $(\zeta,\sigma)$-proper. Let $L_0 $ be sufficiently large and suppose  Condition \ref{ick} holds.
	Then
	\[ P[ \{\Upsilon^\cV_{L_{k+1}}(x,0)  \lra \Upsilon^\cV_{L_{k+1}}(x,T_{k+1}) | B_{L_{k+1},T_{k+1}}(x) \}] <  
	e^{- L_{k+1}^{q} }. \]   
\end{proposition}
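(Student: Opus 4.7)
The plan is to imitate the proof of Proposition \ref{prop:Vbdry0} verbatim, but with Proposition \ref{prop:layer0} replaced by its level-$k$ analog, which I will call Proposition \ref{prop:layerk}. Concretely, that analog should state: for any $t$ with $|t| < T_{k+1} - T_k$,
\[
P[\Upsilon_{L_{k+1}}^{\cV}(x, t) \lra \Upsilon_{L_{k+1}}^{\cV}(x, t + T_k) \mid B_{L_{k+1}, T_k/2}(x, t + \tfrac12 T_k)] \;<\; 1 - c^{R}\epsilon_{k+1}^{p}
\]
for the constant $p$ chosen in \eqref{p00} and a constant $c>0$ depending on $d,\mu_0,C_\kappa$.

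Given Proposition \ref{prop:layerk}, the argument proceeds exactly as in Proposition \ref{prop:Vbdry0}. A percolation path from $\Upsilon_{L_{k+1}}^{\cV}(x,0)$ to $\Upsilon_{L_{k+1}}^{\cV}(x,T_{k+1})$ must cross every slab boundary $\Upsilon_{L_{k+1}}^{\cV}(x, j T_k)$ for $j = 1, \ldots, \lfloor T_{k+1}/T_k \rfloor$. Since the crossings of distinct slabs $[(j-1)T_k, jT_k]$ depend on disjoint portions of the Poisson processes, they are mutually independent, so
\[
P\big[\Upsilon_{L_{k+1}}^{\cV}(x,0) \lra \Upsilon_{L_{k+1}}^{\cV}(x,T_{k+1}) \mid B_{L_{k+1},T_{k+1}}(x)\big] \;\leq\; (1 - c^R \epsilon_{k+1}^p)^{\lfloor T_{k+1}/T_k \rfloor - 1}.
\]
Using $T_{k+1}/T_k = L_{k+1}^{\eta(\gamma-1)/\gamma}$ and $\epsilon_{k+1}^p = L_{k+1}^{-p\alpha}$, the right hand side is at most $\exp\!\big(-\tfrac12 c^R L_{k+1}^{[\eta(\gamma-1) - p\alpha\gamma]/\gamma}\big)$. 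The inequality \eqref{q00} guarantees that the exponent of $L_{k+1}$ here strictly exceeds $q$, so for $L_0$ large enough this is dominated by $\exp(-L_{k+1}^q)$, as required.

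The real work is therefore in Proposition \ref{prop:layerk}, and it is the main obstacle. I would follow the template of Section \ref{sec:layer1}: write the layer-crossing event as $(F_{\cR} \cap G)^c$, where $G$ is the absence of percolation through the non-resonant bulk and $F_{\cR} = F_{\cR}^{(1)} \cap F_{\cR}^{(2)}$ is the absence of percolation through the small set of resonant lines, and apply FKG to bound $P(F_{\cR}^{(1)} \cap F_{\cR}^{(2)} \cap G)$ from below. The novelty relative to the base case is the multi-scale structure of resonances: inside the box of radius $L_{k+1}$ there are at most $R$ many $\epsilon_k$-resonant sites/edges (Proposition \ref{res-sep} at scale $L_{k+1}$), but near each such point one must also exclude the $\epsilon_i$-resonances at intermediate scales $i<k$ in order to legitimately invoke the inductive $(\mu_i,L_i|\bW)$-regularity from Condition \ref{ick}. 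Each application of Proposition \ref{res-sep} at a scale $L_i$ contributes only a constant number of auxiliary resonances, so after summing over all scales $i\le k$ the total count remains bounded by a constant multiple of $R$. This is precisely the reason $(\ref{p00})$ asks for $p > 2R(\gamma+R)/(\gamma-R)$ rather than the tighter $p>2R$ seen in the level-zero analysis; with this buffer, the lower bounds in the generalizations of Propositions \ref{prop:res0a}, \ref{prop:res0b} and \ref{prop:res0c} combine to yield $1 - c^R\epsilon_{k+1}^p$ exactly as in the base case.
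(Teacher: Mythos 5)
Your proposal follows the paper's own argument essentially verbatim: decompose the vertical crossing into $\lfloor T_{k+1}/T_k\rfloor$ independent slab crossings, bound each via a level-$k$ analog of Proposition~\ref{prop:layer0} (namely Proposition~\ref{prop:layerk}), multiply, and use the choice of $q$ from~(\ref{q00}) to dominate $e^{-L_{k+1}^q}$. Your sketch of the content and role of Proposition~\ref{prop:layerk} (multi-scale resonances near the $\epsilon_k$-resonant points and the reason for the buffer $p>2R(\gamma+R)/(\gamma-R)=2R+4R^2/(\gamma-R)$) is also consistent with the paper's Section~\ref{s:klayer}.
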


     The proof of Proposition \ref{P:indk} follows from Propositions \ref{prop:Hbdryk} and \ref{prop:Vbdryk} in a similar way to the proof of Proposition \ref{P:ind0}. 
     
     \begin{proof}[proof of Proposition \ref{P:indk}]
     Let us write $B = B_{L_{k+1},T_{k+1}}$. First let us consider the vertical boundary  $(y, \pm T_{k+1}) \partial^\cV B $,
     \[  \sum_{(y,s) \in\partial^\cV B  }
        P [(x,0) \lra (y,s)\mid B] \leq C_d L_{k+1}^d e^{- L_{k+1}^q} . \]
        Now let us consider the horizontal boundary:
        \[  \int_{U \in \partial^{\cE} B } \lambda_U 
             P[(x,0) \lra B \cap \Upsilon_{1/2}^\cV(U)\mid B]
              \leq C_d \frac{L_{k+1}^{d-1}T_{k+1} }{ \epsilon_{k+1}} \sup_{(y,s) \in \partial^{\cE-}} 
        P[ (x,0) \lra (y,s) \mid B] \]
        Again, as $q > 1$ and $L_0  > 1$ we have 
        \[  Q[(x,0), B] \leq C L_{k+1}^{(d-1) + \eta - \alpha}
            e^{- \ol\mu_k L_{k+1} }, \]
            which, for large enough $L_0$, completes the proof. 
     \end{proof}
     
     \subsubsection{Percolation through the lateral directions}     
     
  We define the sets  of $\epsilon_k$ edge and site resonances  
        \be
         \label{rsetek}
          \cR_k^e = 
         \left\{  u \in \cE\mid \| u - x\| \leq L_{k+1}; 
            \lambda_u > \epsilon_k^{-1}  \right\}. 
            \ee
     and
         \be
         \label{rsetvk}
          \cR_k^v = 
         \left\{  y \in \cV \mid  \| y - x \| \leq L_{k+1} ; 
            \delta_u < \epsilon_k   \right\}.    
            \ee
    From which we define the total resonance set $\cR_k = \cR_k^e \cup \cR_k^v$, the non-resonance set is 
    \[ \cW_{k+1} = \Lambda_{L_{k+1}}(x) \setminus \cR_k.   \]    
    The non-resonant region is 
    \[  \bW_{k+1} = \cW_{k+1} \times [-T_{k+1} , T_{k+1}] .\]    
    The proof of Proposition \ref{prop:Hbdryk} is similar to the proof of Proposition     
     \begin{proof}[proof of Proposition \ref{prop:Hbdryk}]
         As in Section \ref{sec:lateral0} let us define sets of sequences of length $r$,  $\cA_r$, which filter the visits of paths to the resonant set $B_{L_{k+1},T_{k+1}}(x) \setminus \bW_{k+1}$.  For $\bA\in \cA_r$ define $V_\bA$ in analogy to how it is defined in equation  (\ref{eq:VA}).
        As $\Lambda_{L_{k+1}} \setminus \cR_k$ is $\epsilon_k $ non-resonant, Condition \ref{ick} implies percolation in $\cB_k$ is $(\mu_k,L_k)$ regular. Thus, we may repeat the steps of Section \ref{sec:lateral0} to find, for any $\bA \in \cA_k$, 
    \[
       \bbP(V_\bA) \leq \left (\frac{4d R T_{k+1}}{\epsilon_k} \right)^{R_v + R_w + 1 }
       e^{ - \mu_k \left[  L_{k+1} - ( R +1 )L_k  \right]}.
     \]         
      Again, there are at most ${ R \choose r } r!$ elements of $\cA_r$ so the conclusion follows as in Section \ref{sec:lateral0}.
        \qed
        
     \end{proof}
     
     \subsubsection{Percolation in the continuous direction}

     Now we bound the percolation through a layer of the box. 
        \begin{proposition} \label{prop:layerk}
           Suppose  $\Lambda_{L_{k+1}}(x)$ is $\epsilon_{k+1}$ nonresonant and $\rho < \rho_0$ then, for sufficiently large $L_0$,
           \[  P[ \{  \Upsilon_{L_{k+1}}(x,0)  \lra   \Upsilon_{L_{k+1}}(x, T_k)  | B_{L_{k+1},T_{k+1}}(x)\}] <
                1 -  
                \epsilon_{k+1}^p
           \]
       \end{proposition}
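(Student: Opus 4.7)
The plan is to mirror the proof of Proposition \ref{prop:layer0} at level $k+1$, replacing the initial-scale regularity (Condition \ref{ic0}) by the inductive hypothesis (Condition \ref{ick}). I would decompose the complement of the layer-crossing event as an intersection of three decreasing events and combine the resulting lower bounds via the FKG inequality.

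Using the notation already introduced, set $\cR_k = \cR_k^e \cup \cR_k^v$ (equations (\ref{rsetek})--(\ref{rsetvk})), the non-resonant region $\bW_{k+1} = \cW_{k+1} \times [-T_{k+1}, T_{k+1}]$, and the conditioning region $\bU_{k+1} = \bW_{k+1} \cap B_{L_{k+1}, \frac12 T_k}(x, \frac12 T_k)$. By Proposition \ref{res-sep}, $|\cR_k^v| \leq R_v$ and $|\cR_k^e| \leq R_e$, and because $\Lambda_{L_{k+1}}(x)$ is $\epsilon_{k+1}$-non-resonant every vertex in $\cV(\cR_k)$ satisfies $\delta \geq \epsilon_{k+1}$ and every edge in $\cR_k^e$ satisfies $\lambda \leq \epsilon_{k+1}^{-1}$. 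Set the break-layer thickness $\xi = \epsilon_{k+1}/\kappa$ and split $[0, T_k]$ into $I^{(line)} = [0, T_k - \xi]$ and $I^{(bdry)} = [T_k - \xi, T_k]$. In direct analogy with (\ref{F12}) I define $F_{\cR_k}^{(1)} = \bigcap_{w \in \cR_k} \{I_w^{(line)} \lra \Upsilon_{L_{k+1}}^\cV(x, T_k) \mid \bU_{k+1}\}^c$; the break event $F_{\cR_k}^{(2)}$ requiring a death on each $\cV(w) \times I^{(bdry)}$ and no crossing on any incident edge during $I^{(bdry)}$; and the bulk non-percolation event $G_k = \{\Upsilon_{L_{k+1}}^\cV(x, 0) \lra \Upsilon_{L_{k+1}}^\cV(x, T_k) \mid \bU_{k+1}\}^c$. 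Each is decreasing, so FKG gives a product lower bound for the complement of the layer-crossing event.

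The three factors are estimated in strict parallel with the $k=0$ case. For $G_k$, any $y \in \cW_{k+1}$ has an $\epsilon_k$-non-resonant neighborhood inside $\bU_{k+1}$, so Condition \ref{ick} delivers $(\mu_k, L_k \mid \bU_{k+1})$-regularity, and a union bound as in Proposition \ref{prop:res0c} yields $P[G_k] \geq 1 - C L_{k+1}^d e^{-\mu_k L_k}$. For $F_{\cR_k}^{(1)}$, the argument of Proposition \ref{prop:res0a} carries over verbatim: partitioning $I^{(line)}$ into intervals of length $a$ and bounding the communication probability from each slab via Lemma \ref{lem:liebfilter} with the scale-$k$ bulk regularity produces a geometric series summing to at least $c^R$. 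For $F_{\cR_k}^{(2)}$, independence across resonant lines and the Proposition \ref{prop:res0b} calculation, now with $\delta \geq \epsilon_{k+1}$, $\lambda \leq \epsilon_{k+1}^{-1}$, and $\xi = \epsilon_{k+1}/\kappa$, gives $P[F_{\cR_k}^{(2)}] \geq c^R (\epsilon_{k+1}^2/\kappa^2)^{R_v} (\epsilon_k \epsilon_{k+1}/\kappa^2)^{2 R_e}$, which by $\epsilon_k = \epsilon_{k+1}^{1/\gamma}$ and $\kappa = C_\kappa \epsilon_0$ (\ref{kappa}) is bounded below by a constant times $\epsilon_{k+1}^{2(R_v + R_e(1 + 1/\gamma))}$.

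The main obstacle is the parameter accounting. Multiplying the three factors yields a lower bound of the form $c^R \epsilon_{k+1}^{2(R_v + R_e(1 + 1/\gamma))} - C L_{k+1}^d e^{-\mu_k L_k}$, and we must show this exceeds $\epsilon_{k+1}^p = L_{k+1}^{-\alpha p}$. This reduces to checking that the algebraic factor dominates both the combinatorial constants and the bulk exponential error, and the condition $p > 2R(\gamma + R)/(\gamma - R)$ from (\ref{p00}) is tailored precisely to enforce this inequality; the bulk error $C L_{k+1}^d e^{-\mu_k L_k} = C L_0^{d\gamma^{k+1}} e^{-\mu_k L_0^{\gamma^k}}$ is negligible for $L_0$ sufficiently large by the sharper relation $L_{k+1} = L_k^\gamma$ from (\ref{Lscales}), leaving the algebraic factor as the binding constraint.
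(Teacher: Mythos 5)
Your proposal runs into a genuine gap: it only excludes the scale-$k$ resonance set $\cR_k$ from $\Lambda_{L_{k+1}}(x)$, and then claims the argument of Proposition \ref{prop:res0a} ``carries over verbatim.'' It does not. In the level-$0$ proof, the filtration of $F_\cR^{(1)}$ over thin time-slabs $J^{(\ell)}$ of width $a$ at time-distance $a\ell + \xi$ from the top boundary uses Proposition \ref{prop:rhosmall} to get per-slab decay $e^{-2\mu_0(a\ell+\xi)}$, and this requires the environment around the resonance lines to be $\epsilon_0$-non-resonant. At level $k$, the set $\cW_{k+1} = \Lambda_{L_{k+1}}(x)\setminus \cR_k$ that you work in is only $\epsilon_k$-non-resonant: arbitrarily close to $\cR_k$ there may still be $\epsilon_0$-resonant (or $\epsilon_1$-, $\ldots$, $\epsilon_{k-1}$-resonant) sites and edges. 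So neither Proposition \ref{prop:rhosmall} nor the low-scale clauses of Condition \ref{ick} apply to the slabs near $\cR_k$, and the geometric series you need for $P[F_{\cR_k}^{(1)}] \geq c^R$ does not close. Using ``scale-$k$ bulk regularity'' instead does not rescue it either: $(\mu_k, L_k)$-regularity gives decay per $L_k$-sized step and says nothing useful about a slab at time-distance $s \ll L_k$.

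The paper's proof closes this gap by a hierarchical construction that you omit entirely. It builds $\cR_{k-1}', \ldots, \cR_0'$ inductively, sweeping in the $\epsilon_i$-resonances that lie too close to the higher-scale resonances (with separation thresholds $2L_i^\zeta$), and proves Proposition \ref{prop:reduced regular}, which delivers scale-matched decay: $e^{-2\mu_0\min\{s, L_0^\gamma\}}$ near the boundary and $e^{-\mu_i L_i}$ at distance in $[T_i, T_{i+1}]$. This in turn forces the break events $F^{(2)}_w$ to be taken over all of $\cR_0' = \cup_{i=0}^k \cR_i$, with a bookkeeping bound $|\cR_i| \leq 2R^{k+1-i}$ (equation (\ref{cricount})), so that $P[F_\cR^{(2)}]$ acquires the product $\prod_{i=0}^{k-1}\epsilon_{i+1}^{4R^{k+1-i}}$, which telescopes (using $\gamma > R$) to $\epsilon_{k+1}^{4R^2/(\gamma-R)}$ and gives the overall exponent $2R + 4R^2/(\gamma-R) = 2R(\gamma+R)/(\gamma-R)$ matching (\ref{p00}). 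Your bound $\epsilon_{k+1}^{2(R_v + R_e(1+1/\gamma))}$ is smaller than $\epsilon_{k+1}^p$, so the algebra would look fine, but it rests on a break event and a filtration argument that are not justified; the extra exponent in the paper is exactly the price of the secondary resonances you are missing.
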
 
       We prove Proposition \ref{prop:layerk} in Section \ref{s:klayer}.
       \begin{proof}[proof of Proposition \ref{prop:Vbdryk}]
       	The proof is similar to
         the proof of Proposition \ref{prop:Hbdry0} in Section \ref{sec:continuous1}. 
          We again have,
         \[ \{ \Upsilon^\cV_{L_{k+1}} (x,0) \lra \Upsilon^\cV_{L_{k+1} }(x,T_{k+1}) \mid B_{L_{k+1},T_{k+1}}(x) \} \subset 
        \cap_{i = 0}^{\lfloor T_{k+1  } / T_k \rfloor }
          \{ \Upsilon^\cV_{L_{k+1}} (x,T_i) \lra \Upsilon^\cV_{L_{k+1} }(x,T_{i+1}) \mid B_{L_{k+1},T_{k+1}}(x) \}   \] 
          Thus we have 
          \[ P[  \{ \Upsilon^\cV_{L_{k+1}} (x,0) \lra \Upsilon_{L_{k+1} }(x,T_{k+1}) \mid B_{L_{k+1},T_{k+1}}(x) \}  ] 
           <  e^{- \epsilon_{k+1}^p \lfloor T_{k+1  } / T_k \rfloor   }  \]
           which, by the choice of $q$, the result follows for large enough $L_0$. \qed 
       \end{proof}

        \subsubsection{Percolation per layer}\label{s:klayer}
        In this section we will redefine some notation, so that $\cR_i$ are redefined on each level $k$ of the proof.

        Starting with $k$ level  resonances defined in (\ref{rsetek}) and (\ref{rsetvk}) we define neighborhood resonant lines inductively, for $i = k-1,..,1,0$. For all $i$, let $ \cR_{i}'' $ be the `complete set' of $\epsilon_{i}$ resonances, with
        \[\cR_i^{e \prime\prime }     
         = \{ u \in \cE \mid \|u - x\|  \leq  L_{k+1}  ;    \lambda_u  >  \epsilon_{i}^{-1}  \} \]
        and
        \[
        \cR_i^{v\prime\prime }= \{ y \in \cV \mid \| y - x\|  \leq  L_{k+1}  ; \delta_y < \epsilon_{i}   \} 
        \]
        and $\cR_i'' = \cR_i^{e \prime\prime }  \cup  \cR_i^{v\prime\prime } $.
        Then,  let $\cR_{k-1}'$ be the smallest set so that $\cR_k \subset  \cR_{k-1}^{\pe} \subset \cR_{k-1}^{\pe\pe}$, and $ \cR_{k-1}^\pe $ is $2L_{k-1}^\zeta $ nonintersecting the remainder of  $\cR_{k-1}''$: 
        \[ \dist( \cR_{k-1}' , \cR_{k-1}'' \setminus  \cR_{k-1}' ) >  2 L_{k-1}^\zeta. \]
       Finally let $\cR_{k-1}  = \cR_{k-1}^\pe\setminus \cR_{k} $. We  continue a similar induction to lower scales, so that $\cR_{i}^\pe $ is the smallest set so that  $\cR_{i+1}^\pe 
        \subset  \cR_{i}^{\pe} \subset \cR_{i}^{\pe\pe}$, and $ \cR_{k-1}^\pe $ is $L_{i}^\zeta $ nonintersecting the remainder of  $\cR_{i}^{\pe\pe}$: 
        \[ \dist( \cR_{i}' , \cR_{i}'' \setminus  \cR_{i}' ) >  2 L_{k-1}^\zeta. \]
        From the construction, $\cR_0' = \cup_{i=0}^{k} \cR_i$.
     
     We will repeat the structure of the proof from Section \ref{sec:lev0}. Let us re-partition $\cR_0'$ in the following way. 
     Let $Z_k$ be the subset of $\cR_0'$ which is 2 percolating from $\cR_k$.  Inductively, let $Z_i$ be the subset of $\cR_0'\setminus \left( \cup_{j = i+1}^{k} Z_j  \right)$ which is 2 percolating from $\cR_i$.

 Let us define the resonance lines, for $w \in Z_i$, as
 \[  I_w^{(line)} = \cV(w) \times [0,T_k - \xi_i ] \tn{ and }
     I_w^{(bdry)} = \cV(w) \times [T_k - \xi_i , T_k] .\]
        Now  we denote a reduced environment as       
       \be \label{nonresk}
        \bU_{k+1} = 
         \left(
          \Lambda_{L_{k+1}}( x) \setminus  \cV( \cR_0^\pe )  \right)   \times[0,T_{k}]  
       \ee  
     Note that $\cR_0'$ is $L_{k-1}^\zeta $ non-intersecting $\epsilon_{k-1}$ resonant sites and bonds in $\Lambda_{L_{k+1}}(x) \setminus \cR_0'$. We split percolation from $\Upsilon^\cV_{L_{k+1}} (x,0)$ to  $\Upsilon^\cV_{L_{k+1}} (x,T_k)$  through $\bU_{k+1}$ without visiting the resonant set and percolation from $\cup_{ w \in \cR_0'} I_w^{(line)}$ to $\Upsilon^\cV_{L_{k+1}} (x,T_k)$ through $\bU_{k+1}$.
     
      Define an absence of  communication event, 
        \be\label{FAki}  F_{u}^{(1)} = \{ I_u^{(line)} \lra \Upsilon^\cV_{L_{k+1}}(x,T_k)\mid \bU_{k+1} \}^c,\ee
         and a `break' event, for $w \in \cR_i$, 
        \[ 
          F_{w}^{(2)} = \left( \cap_{y \in \cV(w)} \{ (y \times I^{(bdry)} )  \cap \bbD \neq \emptyset \} \right)
           \cap
           \left( \cap_{u \in \cE(\cV(w))}
      \{ (u \times I^{(bdry)})  \cap \bbB = \emptyset  \}\right),  \]
      specifying at least one death and no bonds incident to $I_w^{bdry} $. Similar to Section \ref{sec:lev0} we define the family of events 
        \be
        \label{FAk}
        F_{\cR}^{(1)} = \cap_{w \in \cR_0^\pe}   F_{w}^{(1)} 
        \tn{ and }
       F_{\cR}^{(2)} = \cap_{w \in \cR_0^\pe } F_w^{(2)}.  \ee
     We reuse the notation for absence of  percolation in the nonresonant region, so that   
           \[ G = \{ \Upsilon_{L_{k+1}}(x,0) \lra \Upsilon_{L_{k+1}}(x , T_k)| \bU_{k+1} \}^c. \]
           We have again prepared decreasing events $F_\cR^{(i)}$ and $G$ such that percolation through the layer is contained in the complement of the intersection, so that we may bound percolation probability using the FKG inequality.

          First let us make an observation on the regularity of sites in proximity to $\cR_0'$.

          \begin{proposition}
          	\label{prop:reduced regular}
          	  For $0 < s< T_0$, $y \in \partial^{+} \cR_i$
             \be \label{e0}
                 P((y,T_k - s) \lra \Upsilon^\cV_{L_{k+1}}(x,T_k) | \bU_{k+1}  )  < e^{ - 2\mu_0 \min \{ s, L_0^\gamma\}}
             \ee
             for higher scales, $ i = 0,...,k-1$, and $ T_i \leq  s \leq T_{i+1}$
             \be \label{ei}
                 P((y,T_k - s) \lra \Upsilon^\cV_{L_{k+1}}(x, T_k ) | \bU_{k+1} )  < e^{ - \mu_i L_i}
             \ee
          	
          \end{proposition}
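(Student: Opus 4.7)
The plan is to apply the filter inequality (Lemma \ref{lem:liebfilter}) at the scale $L_i$ appropriate to the time increment $s$: Proposition \ref{prop:bulk} directly for the small-$s$ bound (\ref{e0}), and the inductive regularity of Condition \ref{ick} for the range $T_i \leq s \leq T_{i+1}$ in (\ref{ei}). The geometric input powering both cases is that, by the inductive construction of the nested clusters $\cR_j^\pe$, every $\epsilon_j$-resonance close to $\cR_j^\pe$ (within the separation scale $L_j^\zeta$) has been absorbed into $\cR_j^\pe$ and is therefore removed from $\bU_{k+1}$. Hence the relevant space-time boxes look $\epsilon_j$-nonresonant once they are intersected with $\bU_{k+1}$.

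For (\ref{e0}), I would use that $y \in \partial^+ \cR_i$ neighbors $\cR_i \subset \cR_0^\pe$. By the $L_0^\zeta$-separation enforced on $\cR_0^\pe$, every $\epsilon_0$-resonance within spatial $L^\infty$-distance $L_0^\gamma$ of $y$ lies in $\cR_0^\pe$ and hence outside $\bU_{k+1}$. Thus $\bU_{k+1}$ restricted to the cylinder $\Lambda^\cV_{L_0^\gamma}(y) \times [T_k - L_0^\gamma, T_k]$ is $\epsilon_0$-nonresonant, and Proposition \ref{prop:bulk} controls each unit subbox inside. Iterating the unit-scale filter argument from the proof of Proposition \ref{prop:rhosmall} in the time direction for $\min\{s, L_0^\gamma\}$ steps then produces the claimed bound $e^{-2\mu_0 \min\{s, L_0^\gamma\}}$.

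For (\ref{ei}) with $T_i \leq s \leq T_{i+1}$, I would apply Lemma \ref{lem:liebfilter} once with $V = B_{L_i, T_i}(y, T_k - s)$ and $\bW = \bU_{k+1}$. The range on $s$ ensures $V$ sits inside the time-slab of $\bU_{k+1}$, while the spatial radius $L_i$ is dwarfed by the separation scale $L_i^\zeta$ enforced on $\cR_i^\pe$. Any $\epsilon_i$-resonance in the spatial projection of $V$ therefore lies within $L_i^\zeta$ of $\cR_i \subset \cR_i^\pe$ and so belongs to $\cR_i^\pe \subset \cR_0^\pe$, removed from $\bU_{k+1}$. Consequently $V \cap \bU_{k+1}$ is $\epsilon_i$-nonresonant, Condition \ref{ick} supplies $(\mu_i, L_i \mid \bU_{k+1})$-regularity at $(y, T_k - s)$, and bounding the boundary supremum trivially by $1$ gives the bound $e^{-\mu_i L_i}$.

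The main obstacle is the geometric bookkeeping confirming $\epsilon_j$-nonresonance of the relevant boxes inside $\bU_{k+1}$, which rests on the separation properties built into the inductive construction $\cR_0^\pe \supset \cR_1^\pe \supset \cdots \supset \cR_k$ (ultimately traceable to Proposition \ref{res-sep}). Once that geometric input is available, each bound reduces to a single application of either Proposition \ref{prop:bulk} or Condition \ref{ick} through Lemma \ref{lem:liebfilter}.
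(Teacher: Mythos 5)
Your plan is essentially the paper's (very terse) proof: both rely on the separation built into the nested construction $\cR_k \subset \cR_{k-1}^\pe \subset \cdots \subset \cR_0^\pe$ to conclude that a sufficiently large neighborhood of $y \in \partial^+\cR_i$, once intersected with $\bU_{k+1}$, is $\epsilon_0$-nonresonant (resp.\ $\epsilon_i$-nonresonant), and then invoke Proposition~\ref{prop:bulk} for (\ref{e0}) and $(\mu_i, L_i \mid \bU_{k+1})$-regularity via Condition~\ref{ick} for (\ref{ei}), closing via a single application of Lemma~\ref{lem:liebfilter} with the boundary supremum bounded trivially by $1$.

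One caveat worth recording: for (\ref{e0}) you assert that the $L_0^\zeta$-separation enforced on $\cR_0^\pe$ gives $\epsilon_0$-nonresonance of $\bU_{k+1}$ out to spatial distance $L_0^\gamma$ from $y$. That inference would require $\zeta \geq \gamma$, which is nowhere assumed; the construction only buys you a clean window of radius comparable to $L_0^\zeta$. The paper's statement of (\ref{e0}) writes $L_0^\gamma$, but its own later use of this bound (in Proposition~\ref{prop:resk}) and the separation argument both produce $L_0^\zeta$, so the exponent $\gamma$ in the statement is almost certainly a typo for $\zeta$. With $\min\{s, L_0^\zeta\}$ in place of $\min\{s, L_0^\gamma\}$ your geometric reasoning is exactly right and the rest of the argument goes through as you describe.
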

           Note the events described in (\ref{FAki}) cover percolation within this region.
          \begin{proof}
             By construction, for any $i$ and $u \in \cR_i$ we have, for $j > i$, $\dist(u,\cR_j) \leq 2(L_i^\zeta +\cdots L_{j-1}^\zeta )$   so that $ u $ is $L_j^\zeta$ separated from $\cR_j''\setminus \cR_j'$. For $ j < i$, $u $ is $2 L_j^\zeta$ separated from $\cR_j'' \setminus \cR_j'$ by construction. Thus we have that $\bU_{k+1} $ is $\epsilon_0$ non-resonant from Proposition \ref{prop:bulk}, we have (\ref{e0}). For $0 <  i  < k-1$ we have 
              $\bU_{k+1}$ is $\epsilon_i$ non-resonant so that, for $ T_i \leq  s \leq T_{i+1}$, we have that $(y,T_k - s)$ is $(\mu_i,L_i| \bU_{k+1} )$ regular.\qed
           \end{proof}

       \begin{proposition}\label{prop:resk}
           Suppose  Condition \ref{ic0} and \ref{ick} hold. Then  
           \[ P\left[ F_{\cR}^{(1)}  \right] 
             \geq  c^{  |\cR_0^\pe| }  \]
             for $c$ depending only on the choice of $\mu_0$, $d$ and $C_\kappa$. 
       \end{proposition}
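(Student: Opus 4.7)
The plan is to mirror the level-$0$ argument in the proof of Proposition \ref{prop:res0a}, but now stratified over the scale bands $[T_{i-1}, T_i]$ so that the decay estimates from Proposition \ref{prop:reduced regular} can be applied band-by-band. For each $w \in \cR_0^\pe$, write $i(w)$ for the index such that $w \in Z_{i(w)}$, so that the thickness of the boundary layer is $\xi_{i(w)}$ and $I_w^{(line)} = \cV(w)\times[0,T_k-\xi_{i(w)}]$. Partition $I_w^{(line)}$ into disjoint slabs $J_w^{(\ell)} = \cV(w)\times [T_k - \xi_{i(w)} - a(\ell+1),\, T_k - \xi_{i(w)} - a\ell]$ of small width $a>0$, indexed by $\ell = 0,1,2,\ldots$. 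Let $s_\ell = a\ell + \xi_{i(w)}$ be the vertical distance from the top of $J_w^{(\ell)}$ to $\Upsilon^\cV_{L_{k+1}}(x,T_k)$.

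The first step is to bound $P(\{J_w^{(\ell)} \lra \Upsilon^\cV_{L_{k+1}}(x,T_k)\mid \bU_{k+1}\})$ via the filter inequality (Lemma \ref{lem:liebfilter}) applied with $V$ a thin box around $J_w^{(\ell)}$. Since $J_w^{(\ell)} \subset \bU_{k+1}$ is $\epsilon_0$-non-resonant by construction of $\cR_0^\pe$, the $Q$-factor contributes at most $C_\kappa\, d\, a$. The remaining supremum over the outer edge boundary is controlled by Proposition \ref{prop:reduced regular}: for $s_\ell \leq T_0$ I use bound (\ref{e0}) to get $e^{-2\mu_0 s_\ell}$, and for $T_{j} \leq s_\ell \leq T_{j+1}$ (with $j \leq k-1$) I use (\ref{ei}) to get $e^{-\mu_j L_j}$. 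This yields a bound $p_{w,\ell} \leq C_\kappa\, d\, a \cdot \rho_\ell$ where $\rho_\ell$ denotes the piecewise exponential above.

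Next, use the FKG inequality on the decreasing events $\{J_w^{(\ell)} \not\lra \Upsilon^\cV_{L_{k+1}}(x,T_k)\}^c$ to obtain
\[
P[F_\cR^{(1)}] \geq \prod_{w\in \cR_0^\pe}\prod_{\ell\geq 0}\bigl(1 - p_{w,\ell}\bigr),
\]
so that $-\log P[F_\cR^{(1)}] \leq 2 \sum_{w,\ell} p_{w,\ell}$ provided each $p_{w,\ell}$ is small. The inner sum over $\ell$ splits into the initial geometric sum $\sum_\ell C_\kappa d\, a\, e^{-2\mu_0 s_\ell}$, handled exactly as in Proposition \ref{prop:res0a} (giving a contribution $O(C_\kappa/\mu_0)$), plus one contribution per scale band $j = 0,\ldots,k-1$ of the form $(T_{j+1}/a)\cdot C_\kappa d\, a\, e^{-\mu_j L_j} \leq C_\kappa d\, T_{j+1} e^{-\mu_j L_j}$. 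By the definitions $T_j = L_j^\eta$ and $L_{j+1} = L_j^\gamma$ together with $\mu_j \geq \mu_\infty > 0$, the sequence $T_{j+1}e^{-\mu_j L_j}$ is summable and in fact $O(e^{-L_0^{1/2}})$ for large $L_0$. Hence $\sum_\ell p_{w,\ell} \leq C\,C_\kappa$ uniformly in $w$, and the full bound $P[F_\cR^{(1)}] \geq \exp(-C C_\kappa |\cR_0^\pe|) \geq c^{|\cR_0^\pe|}$ follows.

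The main obstacle is the second step: verifying that the contribution $T_{j+1} e^{-\mu_j L_j}$ from each high-scale band is genuinely small. This is the place where the choice of $\eta$ in (\ref{eta00}) interacts with the regularity scales, and where one needs the inductive decay rates $\mu_j$ to stay bounded away from zero; both are built into the parameter choices so the estimate will go through, but it is the only nontrivial part beyond a direct transcription of the level-$0$ proof.
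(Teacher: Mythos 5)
Your proposal is correct and follows essentially the same route as the paper's proof: partition the resonant line into thin slabs of width $a$, bound percolation from each slab via Lemma \ref{lem:liebfilter} together with the band-dependent decay from Proposition \ref{prop:reduced regular}, then apply FKG and sum the geometric (initial band) and per-scale (higher bands) contributions to get $\exp(-CC_\kappa|\cR_0^\pe|)$. The only cosmetic difference is that you stratify per resonant site $w$ with its own boundary thickness $\xi_{i(w)}$ before taking the product, whereas the paper groups all resonant lines into a single slab $J_\cR^{(\ell)}$ at the cost of an explicit $|\cR_0^\pe|$ factor in each $Q$-bound; the two bookkeeping choices produce the same final estimate.
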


       \begin{proof}
        For  $a > 0 $ and $\ell = 0,..,(T_k - \xi_i)/a $ define the intervals  
     \[  J_{\cR}^{(\ell)} = \cV( \cR_0^\pe ) \times  J^{(\ell)} \tn{ where }
     J^{(\ell)}  =   T_k - r_i - a\ell  + [ - a , 0  ]  .
     \]
     The proof is similar for all $i$, so we will not make any distinctions in the proof below. 
     
     For  $0 \leq \ell \leq (T_0-\xi_i )/a$, we apply (\ref{e0}), so that
     \begin{align}
     \label{T0}
     P( J^{(\ell)}_\cR \lra \Upsilon_{L_{k+1}}(x,T_k)|\bU_{k+1} ) 
     <   C_\kappa  \kappa 4 d |\cR_0^\pe|  a 
      e^{ -2 \mu_0  \min\{ \xi_i + al, L_0^\zeta\}} 
     \end{align}
      For $j = 0,1,..,k-1$ and $\ell$ so that $T_j <  a\ell + \xi_i < T_{j+1}$ we apply (\ref{ei}), so that
     \begin{align}
     \label{Ti}
     P( J^{(\ell)}_Z \lra \Upsilon_{L_{k+1}}(x,T_k)|\bU_{k+1} ) 
     < C_\kappa  4d |\cR_0^\pe| a 
      e^{ - \mu_i L_i } 
     \end{align}
   We now   combine  these bounds and apply the FKG inequality to find   
       \[  P \left[  F_{Z}^{(1)} \right] 
       = P [  \cap_{ \ell } \{   J^{(\ell)}_\cR  \lra  \Upsilon_{L_1}(x,T_k) 
       | \bU_{k+1} \}^c ] \geq 
       \prod_{\ell} 
       P[ \{  J^{(\ell)}_\cR \lra  \Upsilon_{L_1}(x,T_k)
        | \bU_{k+1} \}^c ]   \]
       Using the bounds (\ref{T0}) and (\ref{Ti}) we have
       \begin{align} \nonumber 
        - \log  P \left[  F_{\cR}^{(1)} \right] 
            &   \leq
            \sum_{\ell = 0}^{ (T_0 - \xi_i)/a } -\log \left(1 -   C_\kappa 4d |\cR_0^\pe|  a e^{-2\mu_0 \min\{ r_i + al, L_0^\zeta\}} \right) 
            +
            \sum_{i = 0}^{k-1} - \frac{T_{i+1 }} |\cR_0^\pe |a \log \left( 1 -  C_\kappa  2d  a e^{-\mu_i L_i } \right)
                      \end{align}
         We first bound the  second summation, for small enough $a > 0$ and sufficiently large $L_0$,
                 \[
            \sum_{i = 0}^{k-1} - \frac{T_{i +1 }}a \log \left( 1 -    C_\kappa 4d |\cR_0^\pe|  a e^{-\mu_i L_i } \right)
            \leq  8d |\cR_0^\pe| C_\kappa T_{1} 
            e^{-\mu_0 L_0}
                 \]     
      The first term is handled similarly to the first level proof,
      \[   \sum_{\ell = 0}^{ (T_0 - \xi_i)/a }  -\log \left(1 -  C_\kappa 4d |\cR_0^\pe|  a e^{-2\mu_0 \min\{ \xi_i + al, L_0^\zeta\}} \right)  <  8 d C_\kappa  |\cR_0^\pe| e^{-2 \mu_0 \xi_i}  \]
         Thus we have
         \[   P\left[  F_{A}^{(1)}  \right] \geq \exp\left\{ -  C_\kappa \frac{8d  }{\mu_0 }|\cR_0^\pe|e^{  - 2\mu_0 \xi_i }  \right\}  \geq   \exp\left\{ -  C_\kappa \frac{8d  }{\mu_0 }|\cR_0^\pe|  \right\} . \] 
         \end{proof}

       \begin{proposition}\label{prop:resib} There is some $c > 0$ so that
         \[
         P\left[  F_{\cR}^{(2)} \right]  \geq 
         c^{|\cR_0^\pe|}  \epsilon_{k+1}^{2 |\cR_k|} 
         \prod_{i = 0}^{k-1} 
         \epsilon_{i+1}^{2( |\cR_{i+1} | R + |\cR_i| )}
         \]
       \end{proposition}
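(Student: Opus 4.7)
The plan mirrors the proof of Proposition \ref{prop:res0b}, but it must now track the hierarchy of resonance levels in $\cR_0^\pe = \bigsqcup_{i=0}^{k} \cR_i$ and exploit the refined partition $\{Z_i\}_{i=0}^{k}$ of $\cR_0^\pe$ introduced just before the statement. The two basic tools remain the FKG inequality applied to a decreasing event and the exact independence of the death and bond Poisson processes on disjoint elements of $\cG$.

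First, every $F_w^{(2)}$ is a decreasing event: requiring a death in $I_w^{(bdry)}$ is decreasing since $\omega \leq \omega'$ iff $\bbD_\omega \supset \bbD_{\omega'}$, and requiring no bonds is clearly decreasing. Their intersection $F_\cR^{(2)}$ is therefore decreasing and the FKG inequality (\ref{FK}) gives $P[F_\cR^{(2)}] \geq \prod_{w \in \cR_0^\pe} P[F_w^{(2)}]$. For each $w \in Z_i$, independence of deaths and bonds factorizes
\[ P[F_w^{(2)}] = \prod_{y \in \cV(w)}\bigl(1 - e^{-\delta_y \xi_i/\kappa}\bigr) \cdot \prod_{u \in \cE(\cV(w))} e^{-\kappa \lambda_u \xi_i}, \]
and I would take $\xi_i = \epsilon_{i+1}/\kappa$ in direct analogy with (\ref{xidef}).

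Next I would use the $Z$-partition to obtain level-dependent rate bounds. By definition, $w \in Z_i$ is not $2$-percolating from $\cR_j$ for any $j > i$; since every $y \in \cV(w)$ and $u \in \cE(\cV(w))$ lies within distance $1$ of $w$, no such neighbor can belong to $\cR_j$ with $j > i$, so each is either in $\bigcup_{j \leq i}\cR_j$ or outside $\cR_0^\pe$ entirely; the spacing built into $\cR_0^\pe$ forces the latter to satisfy $\delta_y \geq \epsilon_0$ and $\lambda_u \leq \epsilon_0^{-1}$. Letting $\ell(\cdot)$ denote the level (set to $-1$ outside $\cR_0^\pe$), the hierarchy $\cR_j = \cR_j^\pe \setminus \cR_{j+1}^\pe$ then gives $\delta_y \geq \epsilon_{\ell(y)+1}$ and $\lambda_u \leq \epsilon_{\ell(u)+1}^{-1}$. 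With $\xi_i = \epsilon_{i+1}/\kappa$, each death factor is bounded below by roughly $\tfrac12 \epsilon_{\ell(y)+1}\epsilon_{i+1}/\kappa^2$, and each bond factor by $e^{-\epsilon_{i+1}/\epsilon_{\ell(u)+1}} \geq e^{-1}$ since $\ell(u) \leq i$ forces $\epsilon_{\ell(u)+1} \geq \epsilon_{i+1}$.

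Finally I would combine the per-$w$ bounds by counting elements of each $Z_i$. Apart from $\cR_i$ itself, every element of $Z_i$ is a lower-level resonance sitting in the $2$-neighborhood of an anchor in $\cR_{i+1}$, and Proposition \ref{res-sep} together with the separation built into the nested $\cR_j^\pe$ bounds these neighbors by $R$ per anchor, giving $|Z_i| \leq |\cR_i| + R\,|\cR_{i+1}|$. Multiplying the per-$w$ bounds, grouping $\epsilon$-factors by level, and absorbing $\kappa$-factors and other numerical constants into $c^{|\cR_0^\pe|}$ then reproduces the stated product. The main obstacle is this last combinatorial step: one must check carefully using Proposition \ref{res-sep} and the inductive construction of the $\cR_i^\pe$ that $|Z_i| \leq |\cR_i| + R\,|\cR_{i+1}|$ and that the $\epsilon$-factors allocate correctly across levels; the Poisson estimates and the FKG/independence manipulations themselves are routine extensions of the first-level argument.
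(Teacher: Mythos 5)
Your proposal follows essentially the same approach as the paper's proof: FKG to reduce to $\prod_{w} P[F_w^{(2)}]$, explicit Poisson factorization of each $P[F_w^{(2)}]$ with a level-dependent boundary thickness $\xi_i$, level-dependent rate bounds from the nested construction of $\cR_0^\pe$, and a per-level count via Proposition \ref{res-sep}. One correction to your bookkeeping: since $Z_i$ is by definition the subset of $\cR_0^\pe$ that $2$-percolates from $\cR_i$ (not from $\cR_{i+1}$), the natural satellite count is $|Z_i| \leq (1+R)|\cR_i|$ rather than $|\cR_i| + R|\cR_{i+1}|$; the per-block estimate this yields is $P[\cap_{w\in Z_i}F_w^{(2)}] \geq c^{|\cR_i|}\epsilon_{i+1}^{2|\cR_i|}\epsilon_i^{2R|\cR_i|}$ for $i\geq 1$ (the satellites being $\epsilon_i$-non-resonant), and the factor $\epsilon_{i+1}^{2R|\cR_{i+1}|}$ in the proposition's displayed product then arises by shifting the index in the factor $\epsilon_i^{2R|\cR_i|}$ down by one when multiplying the blocks together, exactly as the paper does.
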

       
         \begin{proof} 
         We begin the proof with the resonances in $Z_k$. For each $w \in Z_k \cap \cR_k$ there are at most $ R_e$ edges and $R_v$ sites of $ \cR_0' \setminus \cR_k  $ 2 percolating from $w$. We can assume elements of $ \cR_0' \setminus \cR_k  $  are not $\epsilon_{k }$ resonant. Now using bounds on $F_w^{(2)}$ as in (...), we have 
         \[   P(\cap_{w \in Z_k} F_{w}^{(2)} ) \geq
          \left(   \prod_{w \in Z_k  } P( F_{w}^{(2)} )  \right)  \]
         For $y \in \cV\cap Z_k$ we have,
         \[ P( F_y^{(2)}) = (1 - e^{- \xi_k \kappa^{-1} \delta_y })
                        \left( \prod_{u\in \cE(w)}  e^{ -\xi_k \kappa \lambda_u  }  \right)  \]
         For $u \in Z_k$ we have, 
         \[  P (F_u^{(2)}) =  e^{ - \xi_i \kappa \lambda_u }
            \left(\prod_{y\in \cV(u)}(1 - e^{- \xi_i \kappa^{-1} \delta_y })\right)
           \left( \prod_{w \in \partial^{\cE+}\cV(u) } e^{ - \xi_i \kappa \lambda_w } \right)   \]
         There are at most $R_v$ sites in $\cV \cap Z_k$ and   $R_e$ bonds in $\cE \cap Z_k$. Each such vertex and edge 2 percolates to at most $R_v$ vertices and $ R_e $ edges which are not $\epsilon_k $ resonant.
        Combining these, for some $c > 0$ independent of $k$, 
    \[    P(\cap_{w \in Z_k} F_{w}^{(2)} ) \geq
      c^{|\cR_k|} \epsilon_{k+1}^{2 |\cR_k| } \epsilon_{k}^{2R|\cR_k|}   \]    
       as $\kappa $ has been chosen sufficiently small.
      Continuing this argument for $ i = k-1,...,1$ we have 
           \[    P(\cap_{w \in Z_i} F_{w}^{(2)} ) \geq
      c^{|\cR_i|} \epsilon_{i+1}^{2 |\cR_i| } \epsilon_{i}^{2R|\cR_i|}   \]    
       Finally, the $i = 0$ case is similar to the initial case so that 
       \[    P(\cap_{w \in Z_0} F_{w}^{(2)} ) \geq
      c^{|\cR_0|} \epsilon_{1}^{2 |\cR_0| }   \] 
        From the FKG inequality, we have   the observation that
         \[ P(F_\cR^{(2)}) \geq \prod_{w \in \cR'_0}  P (F_w^{(2)} )  =  \prod_{i = k}^{0}   \prod_{w \in \cR_i}  P (F_w^{(2)} ).  \]
         The proposition now follows from FKG inequality and combining all above bounds on products.
         \qed
               
       \end{proof}

       \begin{proposition}\label{prop:reskc}
           Suppose  Condition \ref{ic0} holds and  $\Lambda_{L_{k+1}}(x)$ is $\epsilon_{k+1}$ non-resonant and Condition \ref{ic0} holds. Then  
           \[ P\left[ G  \right] 
             \geq  1 - C L_{k+1}^d e^{- \mu_k L_k}  .   \]
       \end{proposition}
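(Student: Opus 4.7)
My plan is to mirror the first-level argument of Proposition \ref{prop:res0c}, replacing the initial regularity supplied by Proposition \ref{prop:bulk} with the inductive hypothesis Condition \ref{ick}. The key observation is that the reduced environment $\bU_{k+1}$ defined in (\ref{nonresk}) contains no $\epsilon_k$-resonances at all: by the construction of $\cR_0'$ in Section \ref{s:klayer}, $\cR_k \subset \cR_0'$, so every $\epsilon_k$-resonant site or edge in $\Lambda_{L_{k+1}}(x)$ has been excised from $\bU_{k+1}$ (with the endpoints of any resonant edge being removed via $\cV(\cR_0')$).

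Given this, I would argue as follows. For every $y \in \Lambda_{L_{k+1}}(x) \setminus \cV(\cR_0')$, the slab $B_{L_k,T_k}(y,0) \cap \bU_{k+1}$ is $\epsilon_k$-non-resonant, so Condition \ref{ick} applies to give
\[ Q\bigl[(y,0);\, B_{L_k,T_k}(y,0) \mid \bU_{k+1}\bigr] < e^{-\mu_k L_k}. \]
I would then apply Lemma \ref{lem:liebfilter} with $A = \{(y,0)\}$ and $V = B_{L_k,T_k}(y,0)$, targeting $\Upsilon^\cV_{L_{k+1}}(x, T_k)$. Since the target lies at time $T_k$ and therefore on or beyond the upper vertical face of $V$, any path from $(y,0)$ to $\Upsilon^\cV_{L_{k+1}}(x, T_k)$ must first meet $\partial V$; bounding the sup factor by $1$ yields
\[ P\bigl[(y,0) \lra \Upsilon^\cV_{L_{k+1}}(x,T_k) \mid \bU_{k+1}\bigr] < e^{-\mu_k L_k}. \]

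A union bound over the at most $C_d L_{k+1}^d$ starting sites $y \in \Lambda_{L_{k+1}}(x) \setminus \cV(\cR_0')$, followed by taking the complement, then delivers the stated lower bound on $P[G]$. I do not anticipate any serious obstacle: the only substantive check is the global $\epsilon_k$-non-resonance of $\bU_{k+1}$, which follows by inspection from the inductive construction of $\cR_0'$ together with the definition (\ref{nonresk}). In particular, the intricate separation-scale bookkeeping used for Propositions \ref{prop:resk} and \ref{prop:resib} is not needed here, since the present statement concerns only the bulk region already cleared of $\epsilon_k$-resonances.
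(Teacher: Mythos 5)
Your proposal is correct and matches the paper's approach: apply the level-$k$ regularity to each site $y$ in the reduced environment $\bU_{k+1}$ to obtain the per-site bound $e^{-\mu_k L_k}$, then union bound over the $O(L_{k+1}^d)$ sites and pass to the complement. Your invocation of Condition \ref{ick} (the inductive hypothesis) is in fact the cleaner citation — the paper's proof nominally references the $L_0$-scale Proposition \ref{prop:bulk} but clearly uses the $(\mu_k,L_k)$ bound, which is exactly Condition \ref{ick} — and your preliminary check that $\bU_{k+1}$ is $\epsilon_k$-non-resonant (because $\cR_k \subset \cR_0'$ and the endpoints of resonant edges are excised via $\cV(\cR_0')$) is the step the paper leaves implicit.
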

       \begin{proof}
       For each $y \in \Lambda_{L_1 }(x)$ apply Proposition \ref{eq:bulk}. 
       \[ P\left( (y,0) \lra  \Upsilon_{L_{k+1}}(x,T_k) \mid \cW^\circ_{k+1} \right) 
         \leq e^{-\mu_k L_k}.   \]
        The proof is completed by summing over all $y$ and taking the complement. \qed
       \end{proof} 

    \begin{proof}[proof of Proposition \ref{prop:layerk}]
    
          By Proposition \ref{res-sep}, there are only $R$ points  in $\cR_k$. For each $u \in \cR_k$ there are at most $R$ points in $\Lambda_{L_k}(u)$ which are $\epsilon_{k-1}$ resonant, as $2 R L_{k-1}^{\zeta} < \frac{1}{2} L_k$ the $\epsilon_{k-1} $ resonant points can not percolate outside the set  $\Lambda_{L_k}(y)$. Thus $\cR_{k-1}$ contains at most $R_g^2$ points, note we are not including the points of $\cR_{k}$ in $\cR_{k-1}$. A similar argument implies, for each $w\in R_{j}$, that there are at most $ R $ elements in $R_{i}$ for $i < j$.
          Thus there are at most $R( R + R^2)$ many  $\epsilon_{k-2}$ resonant points which $L_{k-2} $ percolate from $\cR_{k} \cup \cR_{k-1}$. Formally $|\cR_{k-2}| \leq R^2 +  R^3$. 
          Continuing to further scales obtains there are at most $ R^2  + \cdots + R^{j+1} = R^2 \frac{R^j -1  }{ R -1} $ many $\epsilon_{k- j}$ resonant sites percolating from $\cR_{k-j+1}' =\cR_k \cup \cR_{k-1} \cup \cdots \cup \cR_{k-j+1} $.  Thus, for each $i$ we have  
          \be \label{cricount} |\cR_{i}| \leq   R^2 \frac{R^{k-i}  -1 }{ R - 1}  \leq  2  R^{k +1 -i}   \ee 
          (recall  $  R \geq 2 $). It follows immediately that $ |\cR_0^\pe | \leq 4 R^{k+1}$.
          
           Using (\ref{cricount}) and the bound  in Proposition \ref{prop:resib} we have
         \[
         P\left(  F_{\cR}^{(2)} \right)  \geq  
         c^{4 R^{k+1} }
          \epsilon_{k+1}^{2 R } 
         \prod_{i = 0}^{k-1} 
         \epsilon_{i+1}^{4 R^{k+1 - i}} 
         \]
         By definition $\epsilon_{i+1} = L_0^{- \alpha \gamma^{i+1}}$ thus,  
            \[
         P\left(  F_{\cR}^{(2)} \right)  \geq  
         c^{4 R^{k+1} }
          \epsilon_{k+1}^{2 R  + 4 \frac{ R^2}{\gamma - R} }  
         \]
         Now using Proposition  \ref{prop:resk} and the above bound, there is $c_1 > 0$ so that 
         \[  P\left( G \cap F_\cR^{(1)} \cap F_{\cR}^{(2)} \right) 
             > \frac12 c_1^{ 4 R^{k+1}} 
        \epsilon_{k+1}^{2 R  + 4 \frac{ R^2}{\gamma - R} }   \]      
        As 
        \[  \{  \Upsilon^\cV_{L_{k+1}}(x,0)  \lra   \Upsilon^\cV_{L_{k+1}}(x, T_k)  | B_{L_{k+1},T_{k+1}}(x)\}^c \supset   G \cap F_\cR^{(1)} \cap F_{\cR}^{(2)}  \]   
       by the choice of $p$, and for large enough $L_0$, this completes the proof.
    \qed
    \end{proof}

\bibliographystyle{plain}	
\bibliography{noterefs}

\end{document}